\newcommand{\ff}{\mathbb F}
\newcommand{\F}{\mathbb{F}}
\newtheorem{theorem}{Theorem}[section]
\newtheorem{lemma}[theorem]{Lemma}
\newtheorem{corollary}[theorem]{Corollary}
\newdefinition{definition}{Definition}
\newdefinition{example}{Example}
\newdefinition{remark}{Remark}[section]
\title{Frobenius nonclassical components of  curves with separated variables}
\begin{document}
\makeatletter
\def\ps@pprintTitle{%
  \let\@oddhead\@empty
  \let\@evenhead\@empty
  \let\@oddfoot\@empty
  \let\@evenfoot\@oddfoot
}
\makeatother

%
%

\begin{frontmatter}

\title{Frobenius nonclassical components of  curves with separated variables}

\author{Herivelto Borges}

\address{Universidade de S\~ao Paulo, Inst. de Ci\^encias Matem\'aticas e de Computa\c c\~ao, S\~ao Carlos, SP 13560-970, Brazil.}

\date{Nov  2013}

\begin{abstract} 
We establish a relation  between minimal value set polynomials defined over $\F_q$ and   certain 
$q$-Frobenius nonclassical curves.  The connection leads to a characterization of the  curves of type $g(y)=f(x)$, whose irreducible components are $q$-Frobenius nonclassical. An immediate consequence will be  the realization of  rich sources of new $q$-Frobenius nonclassical curves.
 \end{abstract}

\begin{keyword}
Frobenius nonclassical curve\sep Finite Field \sep minimal  value set polynomial. \\

2010 Mathematics  Subject Classification: Primary 14H45, 11C08; Secondary 14Hxx.
\end{keyword}

\end{frontmatter}

\section{Introduction}\label{intro}

Let $p$ be a prime number, and $\F_q$ be the field with $q=p^s$ elements. An irreducible plane curve $\mathcal{F}: F(x,y)=0$ defined over $\F_q$ is called $q$-Frobenius nonclassical if 
\begin{equation}\label{frob}
F(x,y) \hspace{0.3 cm}\text{divides}\hspace{0.3 cm} (x^q-x)\frac{\partial F}{\partial x}+(y^q-y)\frac{\partial F}{\partial y}.
\end{equation}

Otherwise,  $\mathcal{F}$ is called $q$-Frobenius classical. Note that  the previous condition above has a geometric meaning:  the $\F_q$-Frobenius map takes any simple point $P$ of  $\mathcal{F}$ to the tangent line of $\mathcal{F}$ at $P$. 

Frobenius nonclassical curves were first introduced in  the work of St\"ohr and Voloch  \cite{SV}. It is well known that such curves potentially have many rational points and interesting arithmetic and geometric properties (\cite{HV},\cite{SV}). This fact, along with other related results (\cite{Multi-Frob},\cite{garcia-ynf}, \citep*{arcos-italianos},\cite{Hefez1}), makes the characterization of these curves highly desirable. This  common sense is  reaffirmed by the following quotation from the recent book by Hirschfeld, Korchm\'aros and Torres \cite[p. 407]{Livro-HKT}:  `` it is hard to find Frobenius nonclassical curves. What emerges is that they are rare but important curves". 

With regard to the number of rational points, one important result is the following (cf.  \cite[Theorem 2.3]{SV}).
\begin{theorem}[St\"ohr-Voloch]\label{H-V} Let $\mathcal{X}$ be an irreducible plane curve of degree $d$ and genus $g$ defined over $\F_q$. If  $N:=\#\mathcal{X}(\F_q)$ is the number of $\F_{q}$-rational points on $\mathcal{X}$, then
\begin{equation}\label{SV-bound}
N\le \dfrac{\nu (2g-2) +(q+2)d}{2},
\end{equation}
where \begin{equation}\label{eq:cases}
\nu =\begin{cases}
p^h, \hspace{0.3cm}\text{for some $h\geq 1$}, &  \text{ if  $\mathcal{X }$ is $q$-Frobenius nonclassical}\\
1, & \text{ otherwise.}\\
\end{cases}
\end{equation}
\end{theorem}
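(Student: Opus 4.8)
The plan is to follow the Stöhr–Voloch method, working not on the possibly singular plane model $\mathcal{X}$ but on its nonsingular model $X$, a smooth projective curve of genus $g$ over $\F_q$. First I would replace the plane embedding by the base-point-free linear system $\mathcal{D}=g^2_d$ cut out by lines, giving a morphism $\phi=(x_0:x_1:x_2)\colon X\to \pp^2$ of degree $d$ onto $\mathcal{X}$. Fixing a separating variable $t$ and the associated Hasse derivatives $D^{(j)}$, I would introduce the generic order sequence $\epsilon_0=0<\epsilon_1=1<\epsilon_2$ of $\mathcal{D}$ and the Frobenius order sequence $\nu_0=0<\nu_1$, the latter defined as the smallest $m$ for which the Frobenius-twisted Wronskian $\det(x_i^q;\,x_i;\,D^{(m)}x_i)$ does not vanish identically. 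A short computation identifies, in affine coordinates, the identical vanishing of this determinant at $m=1$ with condition \eqref{frob}; hence $\nu_1\in\{1,\epsilon_2\}$, with $\nu_1=1$ in the Frobenius classical case and $\nu_1=\epsilon_2>1$ in the nonclassical case. Write $\nu:=\nu_1$.

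Next I would establish that $\nu$ is a power of $p$ in the nonclassical case. This is where the $q$-power Frobenius interacts with the Hasse derivatives: since $D^{(j)}(f^q)$ vanishes unless $q\mid j$, the survival of the twisted Wronskian at order $m$ is governed by congruences among the binomial coefficients $\binom{m}{\epsilon_i}\pmod p$. Via Lucas' theorem, the minimal $m=\nu$ for which the determinant survives must have a single nonzero $p$-adic digit, i.e. $\nu=p^h$ for some $h\ge 1$.

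The analytic core is the Frobenius divisor $S$, the divisor of the global section realized by $\det(x_i^q;\,x_i;\,D^{(\nu)}x_i)$. Two facts drive the estimate. On the one hand, a degree computation — tracking how this determinant transforms under changes of local parameter and of the projective frame, and comparing against the canonical class plus $\phi^*\mathcal{O}(1)$ — yields
\[
\deg S=(\nu_0+\nu_1)(2g-2)+(q+2)d=\nu(2g-2)+(q+2)d.
\]
On the other hand, for each $P\in X(\F_q)$ the Frobenius fixes $\phi(P)$, so the first two rows of the defining matrix become proportional at $P$; a local expansion, using that each $x_i^q-x_i$ vanishes at $P$ together with $x_i(P)^q=x_i(P)$, gives $v_P(S)\ge (j_1(P)-\nu_0)+(j_2(P)-\nu_1)$, where $j_0<j_1<j_2$ are the $(\mathcal{D},P)$-orders. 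The decisive point is that rationality forces strict inequalities $j_{i+1}(P)>\nu_i$: the extra incidence $\phi^{(q)}(P)=\phi(P)$ pushes the osculation order above its generic Frobenius value, so that $j_1(P)\ge 1$ and $j_2(P)\ge \nu+1$, whence $v_P(S)\ge 2$. Summing over the $N$ rational points gives $2N\le \sum_{P} v_P(S)\le \deg S$, which rearranges to \eqref{SV-bound}.

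The main obstacle, I expect, is the rigorous degree computation for $S$ together with the pointwise lower bound $j_{i+1}(P)>\nu_i$ at \emph{every} rational point, including Weierstrass points and points lying over singularities of $\mathcal{X}$, where the orders $j_i(P)$ jump and the naive local model breaks down. Correctly bookkeeping these contributions — and separately verifying the $p$-adic argument that pins $\nu$ to a power of $p$ — is the technical heart of the proof; the final Bézout-type summation is then routine.
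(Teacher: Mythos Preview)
The paper does not prove this theorem at all: it is quoted as a known result with the citation ``cf.\ \cite[Theorem 2.3]{SV}'' and no argument is given. So there is no ``paper's own proof'' to compare your proposal against.

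That said, your sketch is a faithful outline of the original St\"ohr--Voloch argument: pass to the nonsingular model, use the linear system cut out by lines, introduce the Frobenius order sequence $(\nu_0,\nu_1)=(0,\nu)$, form the Frobenius divisor $S$, compute $\deg S=\nu(2g-2)+(q+2)d$, and use $v_P(S)\ge 2$ at each rational point. The identification of the Frobenius classical case with $\nu=1$ via the vanishing of the determinant and condition \eqref{frob} is also correct. One small caveat: your heuristic for why $\nu$ must be a prime power (``a single nonzero $p$-adic digit'' via Lucas) is not quite the actual mechanism in \cite{SV}; there the $p$-power conclusion comes from the $p$-filtration structure of Hasse derivatives and the fact that the order sequence is closed under the operation $i\mapsto$ (largest $\epsilon_j\le i$ with $\binom{i}{\epsilon_j}\not\equiv 0$), not merely from a binomial-coefficient survival argument. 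If you intend to write this up in full, that step deserves a more careful justification than the one you indicate.
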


Thus if we are able to identify  the $q$-Frobenius nonclassical curves, we will be left with the remaining curves for which a better upper bound holds (inequality (\ref{SV-bound}) with $\nu=1$). At the same time, the set of $q$-Frobenius nonclassical curves provides  a potential source of curves with many points. Therefore, in  light of Theorem \ref{H-V}, characterizing $q$-Frobenius nonclassical curves may offer a two-fold benefit.

Examples of $q$-Frobenius nonclassical curves are the Fermat curves 
\begin{equation}\label{fermat}
x^{\frac{q-1}{q'-1}}+y^{\frac{q-1}{q'-1}}+1=0,
\end{equation}
where $\F_{q'} \subseteq \F_{q} $ (see \cite[Theorem 2]{G-V})  Note that, in particular,  the Hermitian curve (case $q'=\sqrt{q}$) is $q$-Frobenius nonclassical.  Additional examples  can be found in the literature (\cite{Multi-Frob},\cite{garcia-ynf},\cite{HV}).
 
The purpose  of this paper is to present a new  connection between certain $q$-Frobenius nonclassical curves and minimal value set polynomials, i.e., polynomials
$f(x)\in \F_q[x]$ for which $V_f=\{f(\alpha): \alpha \in \F_q\}$ has size $\left\lceil q/\deg f\right\rceil$.  This relation leads us to a characterization of the curves $f(x)=g(y)$ for which all  irreducible components are $q$-Frobenius nonclassical. For a prototype of this connection, note that 
$$f(x)=x^{\frac{q-1}{q'-1}} \text{ and } g(y)=-(y^{\frac{q-1}{q'-1}}+1)$$
 are minimal value set polynomials over $\F_q$, with $V_f=V_g=\F_{q'}$, and that $f(x)=g(y)$ is the  $q$-Frobenius nonclassical curve given in  (\ref{fermat}).  
 
 A consequence of this correspondence will be the realization of new sources  of  Frobenius nonclassical curves. It is worth noting that,  as one would expect, some curves in this new family will have many rational points. For instance,  we will see that the so-called generalized Hermitian curve,  the  curve over $\F_{q^k}$ ($k\geq 2$) given by

\begin{equation}\label{curvaGS}
\mathcal{GS}: y^{q^{k-1}}+\cdots+y^q+y=x^{q+1}+x^{1+q^2}+\cdots+x^{q^{k-1}+q^{k-2}},
\end{equation}
is  Frobenius nonclassical. The curve (\ref{curvaGS}), introduced by Garcia  and Stichtenoth in \cite{Garcia_A_class_of_poly_1999424}, has genus  $g=(q^{k-1}-1)q^{k-1}/2,$ and $N=q^{2k-1}+1$  $\F_{q^k}$-rational points.
Additional arithmetic properties of the curve $\mathcal{GS}$ (\cite{bulygin_generalize_hermitian},\cite{munuera_sepulv_Generl_hermitian_code}, \cite{Munuera_sepulv_Algebraic_codes_castle}) make it suitable for  construction of algebraic geometric codes with good parameters.

It should be mentioned that, using the characterization established in this paper, an alternative generalization of the Hermitian curve (with an even better ratio $N/g$) can be constructed. This will be  the content of a subsequent paper.

The present paper is organized as follows. In Section \ref{secao2}, we recall the main facts and results related to minimal value set polynomials. In Section \ref{secao3}, we establish a connection between  Frobenius nonclassical curves and minimal value set polynomials, and prove Theorem \ref{main0} and Corollary \ref{main}, the main results of this paper. In Section \ref{secao4}, we consider the minimal value set polynomials $F\in \F_q[x]$ for which $|V_F|\leq 2$. As observed  by Carlitz et al. \cite{carlitz_pol_minimal_61},
these polynomials do not follow the general pattern. Nonetheless, we will be able to characterize the ones that do give rise to Frobenius nonclassical curves.  In  Section \ref{secao5}, we make use of the preceding results to characterize Frobenius nonclassical curves of type $y^n=f(x)$. This will incorporate Garcia's results in \cite{garcia-ynf}.  In  Section \ref{secao6},  we provide additional examples and briefly discuss some problems related to Frobenius classicality.
In particular,  we  answer  a question raised by Giulietti et al. \cite{arcos-italianos} regarding the arc property of Frobenius nonclassical curves.

\text{}\\

{\bf \large {Notation}} 
\begin{itemize}
\item $q$ denotes a power of a prime $p$.

\item $\overline{\F_q}$ denotes the algebraic closure of $\F_q$.

\item If $f(x,y) \in \F_q[x,y]$, and the affine curve $\mathcal{F}: f(x,y)=0$ is irreducible, we denote by  $\mathcal{F}(\F_q)$ the set of $\F_q$-rational points on the projective closure of $\mathcal{F}$.

\item We will  omit the $q$-part in the name {\it $q$-Frobenius (non)classical}, when the finite field $\F_q$ is clear or irrelevant in the  context.

\item For any polynomial $f\in \F_{q}[x]$, the symbol $f'$ will denote the formal derivative of $f$.
\end{itemize}


\section{Minimal value set polynomials}\label{secao2}
For any nonconstant polynomial $F\in\F_q[x]$, let $V_F=\{F(\alpha): \alpha \in \F_q\}$ be its value set. One can easily verify that $V_F$ satisfies

\begin{equation}\label{mvsp-def}
\left\lfloor\dfrac{q-1}{\deg F}\right\rfloor +1 \leq |V_F| \leq q.
\end{equation}

\begin{definition}\label{def:mvsp}
A polynomial $F\in\F_q[x]$ is called \emph{minimal value set polynomial} (shortened to $MVSP$ ) if 
$|V_F|$ attains the lower bound in $(\ref{mvsp-def})$.
\end{definition}

Despite significant past results in \cite{carlitz_pol_minimal_61} and \cite{mills_pol_minimal_64}, and recent progress presented  in \cite{borges_con_charac_mvsp}, the  complete  characterization of MVSPs is still an open problem. 

A fundamental result concerning  these polynomials is the following (cf. \cite[Theorem 1]{mills_pol_minimal_64}).

\begin{theorem}[Mills] \label{main1} Let $F \in \F_{q}[x]$ be  nonconstant polynomial, and   consider the following.

\begin{enumerate}[\rm(i)]
\item Let $V_F=\{\gamma_0,\gamma_1,\ldots,\gamma_r\} \subseteq \F_{q}$ be the value set of $F$.
\item For each $i\in \{0,\ldots,r\}$, set $L_i:=\gcd(F-\gamma_i,x^q-x)$.

\item Suppose  $\gamma_i$ are arranged in such a way that $\deg L_0\leq \deg L_i$, $1\leq i \leq r$. 

\end{enumerate}

If  $F$ is an MVSP and $r>1$,  then there exist positive integers $v,m,k$; a polynomial $N\in\F_q[x]$, and $ \omega_0,\omega_1,\ldots,\omega_m\in \F_q$, with   $0\neq \omega_0$ and $\omega_m=1$, such that
\begin{enumerate}[\rm(a)]
\item $v\mid (p^{k}-1)$,  $1+vr=p^{mk}$, $L_0\nmid N$, and $L_0'$  is a $p^{mk}$-th power.
\item $F=L_0^{v}N^{p^{mk}}+\gamma_0.$
\item $\prod \limits_{i=1}^{r}(x-\gamma_i+\gamma_0)=\sum \limits_{i=0}^{m}\omega_i x^{(p^{ki}-1)/v}$.
\item $\sum \limits_{i=0}^{m}\omega_i L_0^{p^{ki}}N^{p^{mk}(p^{ki}-1)/v}=-\omega_0(x^{q}-x)L_0'.$
\end{enumerate}
   
\end{theorem}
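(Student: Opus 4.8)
The plan is to analyze the map $F\colon\F_q\to V_F$ through the fiber polynomials $L_i$ and the value polynomial $h(y):=\prod_{i=0}^r(y-\gamma_i)$, and then to turn the extremal hypothesis that $|V_F|$ is minimal into the rigid divisibility statements (a)--(d). Write $d:=\deg F$. First I would record two structural identities. Since the $\F_q$-roots of the polynomials $F-\gamma_i$ partition $\F_q$ and are simple as roots of $x^q-x$, the monic gcds satisfy $\prod_{i=0}^{r}L_i=x^q-x$ and $\sum_{i=0}^{r}\deg L_i=q$. Second, at any $\alpha\in\F_q$ exactly one factor of $h(F)=\prod_{i=0}^r(F-\gamma_i)$ vanishes, to order $1+\ord_\alpha F'$, so the $\F_q$-rational part of $h(F)$ is $(x^q-x)\prod_{\alpha\in\F_q}(x-\alpha)^{\ord_\alpha F'}$; this gives $\prod_{i=0}^{r}(F-\gamma_i)=(x^q-x)\,D\,R$ with $D:=\prod_{\alpha\in\F_q}(x-\alpha)^{\ord_\alpha F'}$ and $R$ having no $\F_q$-root. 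Setting $\phi(y):=\prod_{i=1}^{r}(y-\gamma_i)$, the coprimality $\gcd(F-\gamma_0,\phi(F))=1$ splits this as $x^q-x=L_0M$ with $M:=\prod_{i\ge1}L_i$. I would then feed in the MVSP equality $r=\lfloor(q-1)/d\rfloor$ together with $\deg L_0\le\deg L_i$ to pin the fiber degrees.

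The heart of the argument is part (b), the factorization $F-\gamma_0=L_0^{v}N^{p^{mk}}$ with $L_0\nmid N$. Here I expect the real work to lie: one must show that minimality of the $\gamma_0$-fiber forces every $\F_q$-rational root of $F-\gamma_0$ to occur with one common multiplicity $v$, and that the complementary (inseparable) factor is a genuine $p^{mk}$-th power. The tool I would exploit throughout is that a perfect $p^{mk}$-th power has zero derivative, so once (b) holds one gets $F'=vL_0^{v-1}L_0'N^{p^{mk}}$, whence the rational part $D$ is determined; conversely I would argue that the degree bookkeeping of the first step, combined with $\deg L_0$ being minimal, leaves no room for any other distribution of multiplicities, and this simultaneously yields $v\mid(p^k-1)$ and that $L_0'$ is a $p^{mk}$-th power. \textbf{This separation of the separable part (governed by $L_0$ and the exponent $v$) from the inseparable part (the perfect power $N^{p^{mk}}$) is the main obstacle}, because it is precisely where the hypothesis ``$|V_F|$ minimal'' must be used essentially rather than formally.

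Granting (b), part (c) should follow by substitution. Evaluating the shifted value polynomial at $F-\gamma_0$ gives $\phi(F)=\prod_{i=1}^r\bigl((F-\gamma_0)-(\gamma_i-\gamma_0)\bigr)$, and since each power $(F-\gamma_0)^{j}$ equals $L_0^{vj}N^{p^{mk}j}$, compatibility with the $p^k$-power shape of the fibers forces $\prod_{i=1}^{r}(x-\gamma_i+\gamma_0)$ to be the lacunary polynomial $\sum_{i=0}^{m}\omega_i x^{(p^{ki}-1)/v}$. Integrality of the exponents then gives $v\mid(p^k-1)$, while comparing the top degree $r$ on the left with $(p^{km}-1)/v$ on the right (where $\omega_m=1$) gives $1+vr=p^{mk}$, completing (a).

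Finally, for (d) I would substitute (c) back into $\phi(F)$ to obtain $\phi(F)=\sum_{i=0}^{m}\omega_i L_0^{p^{ki}-1}N^{p^{mk}(p^{ki}-1)/v}$, so that $L_0\phi(F)$ equals the left-hand side of (d). It then remains to identify $L_0\phi(F)$ with $-\omega_0(x^q-x)L_0'$; equivalently, using $x^q-x=L_0M$, to prove $\phi(F)=-\omega_0 ML_0'$. This I would get by differentiating (b) to isolate $F'=vL_0^{v-1}L_0'N^{p^{mk}}$, feeding it into the identity $\prod_i(F-\gamma_i)=(x^q-x)DR$ from the first step, and matching degrees; the consistency of this match forces the auxiliary relation $rd=q-1$, which is exactly what makes the two sides of (d) agree in degree.
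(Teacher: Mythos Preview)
The paper does not contain a proof of this theorem. It is stated there as a quotation of Mills' result, with the attribution ``cf.\ \cite[Theorem 1]{mills_pol_minimal_64}'', and is used as a black box in the rest of the article. So there is no proof in the paper against which to compare your proposal.

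As for the proposal itself: it is an outline, not a proof, and the outline is honest about where the hole is. You correctly set up the partition identity $\prod_i L_i=x^q-x$ and the factorization of $h(F)$, but you then state that part~(b) --- the decomposition $F-\gamma_0=L_0^{\,v}N^{p^{mk}}$ with a common exponent $v$ on all $\F_q$-rational roots and a perfect $p^{mk}$-th power for the rest --- is ``the main obstacle'', and you do not supply the argument. Everything after that is conditional (``Granting~(b)\ldots''). This is precisely the step where Mills does the real work: one has to show that the minimal-value-set hypothesis forces all $\F_q$-rational roots of $F-\gamma_0$ to share one multiplicity $v$ (not just one residue class mod $p$), that this $v$ divides $p^k-1$ for a specific $k$, and that what remains is a genuine $p^{mk}$-th power rather than merely a $p$-th power. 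Your degree bookkeeping and the observation that $p$-th powers have zero derivative are not enough to force a \emph{uniform} $v$ or to pin down the exponent $mk$; Mills' argument uses a more delicate analysis of the fibers $L_1,\ldots,L_r$ (they all have the same degree, and the multiplicities there are all $\equiv 1\bmod p$) to back-solve for the structure of $F-\gamma_0$. Your sketch of~(c) is also too quick: ``compatibility with the $p^k$-power shape of the fibers forces'' the lacunary form is an assertion, not a derivation. Finally, the closing line ``the consistency of this match forces the auxiliary relation $rd=q-1$'' is not generally true; the MVSP condition gives only $r=\lfloor(q-1)/d\rfloor$, and in fact $rd<q-1$ can occur (the slack is absorbed by $\deg L_0$).
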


\noindent In the remainder of this section, we provide additional results on MVSPs that bear upon the sections that follow. The next  theorem  will be a key ingredient. It is a slightly extended version of   \cite[Theorem 3.1]{borges_con_charac_mvsp}, which in turn  is  partially  derived from results in \cite{mills_pol_minimal_64}.

\begin{remark}\label{troca}Note that if  $a,b \in \F_{q}$  are distinct, and 
$$S_{\{a,b\}}=\Big\{F\in \F_{q}[x]: V_{F}=\{a,b\}\Big\},$$
 then a map $ S_{\{a,b\}} \rightarrow S_{\{0,1\}}$ given by $F \mapsto \frac{1}{a-b}(F-b)$
is a bijection. We use this fact in some of our later proofs.
\end{remark}

\begin{theorem}\label{Mills}
Let $F \in \F_q[x]$ be a polynomial of degree $d\geq 1$. If there exists $\theta \in \F_q^*$, and a monic polynomial $T \in \overline{\F_q} [x]$ such that 
\begin{equation}\label{mvsp}
T(F)=\theta(x^q-x)F^{\prime},
\end{equation}
then $T= \prod\limits_{\gamma_i \in V_F}(x-\gamma_i)$ and $F$ is an MVSP. Conversely, suppose that $F$ is an MVSP and
$T= \prod\limits_{\gamma_i \in V_F}(x-\gamma_i)$. If either $|V_F|>2$ or $|V_F|=2=p$,  then there exists $\theta\in\F_q^*$ such that
(\ref{mvsp}) holds.
\end{theorem}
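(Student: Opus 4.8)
The plan is to prove the two implications separately, and to split the converse into the regimes $|V_F|>2$ (where Mills' Theorem \ref{main1} applies) and $|V_F|=2=p$ (which lies outside it).

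For the forward implication, suppose $T(F)=\theta(x^q-x)F'$ with $T$ monic and $\theta\in\F_q^*$. First I would observe that $F'\neq 0$: otherwise the right-hand side vanishes while $T(F)\neq 0$ (as $T$ is monic of positive degree and $F$ is nonconstant). Next I would evaluate at each $\alpha\in\F_q$: since $\alpha^q-\alpha=0$, the right-hand side vanishes there, forcing $T(F(\alpha))=0$. Hence every element of $V_F$ is a root of $T$, so $\prod_{\gamma_i\in V_F}(x-\gamma_i)\mid T$ and in particular $|V_F|\le\deg T$. Comparing degrees in the identity gives $d\cdot\deg T=q+\deg F'$ with $0\le\deg F'\le d-1$, whence $\deg T\le\lfloor(q-1)/d\rfloor+1$. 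Together with the universal lower bound $(\ref{mvsp-def})$ this produces the squeeze
\[
\left\lfloor\tfrac{q-1}{d}\right\rfloor+1\le|V_F|\le\deg T\le\left\lfloor\tfrac{q-1}{d}\right\rfloor+1,
\]
so all three coincide: $F$ is an MVSP and $\deg T=|V_F|$, which with the divisibility and monicity forces $T=\prod_{\gamma_i\in V_F}(x-\gamma_i)$.

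For the converse with $|V_F|>2$ (so $r>1$), the idea is to read everything off Mills' Theorem \ref{main1}. Writing $G(X):=\prod_{i=1}^r\bigl(X-(\gamma_i-\gamma_0)\bigr)$, part (c) gives $G(X)=\sum_{i=0}^m\omega_i X^{(p^{ki}-1)/v}$ and part (b) gives $F-\gamma_0=L_0^{v}N^{p^{mk}}$. Substituting, each term becomes $\omega_i(F-\gamma_0)^{(p^{ki}-1)/v}=\omega_i L_0^{p^{ki}-1}N^{p^{mk}(p^{ki}-1)/v}$, so multiplying by $L_0$ and invoking part (d) yields
\[
L_0\,G(F-\gamma_0)=\sum_{i=0}^m\omega_i L_0^{p^{ki}}N^{p^{mk}(p^{ki}-1)/v}=-\omega_0(x^q-x)L_0'.
\]
Since $T(F)=(F-\gamma_0)\,G(F-\gamma_0)=L_0^{v-1}N^{p^{mk}}\cdot\bigl(L_0\,G(F-\gamma_0)\bigr)$, this gives $T(F)=-\omega_0 L_0^{v-1}N^{p^{mk}}(x^q-x)L_0'$. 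On the other hand $N^{p^{mk}}$ is a $p$-th power, so its derivative vanishes and $F'=v L_0^{v-1}L_0'N^{p^{mk}}$; comparing the two expressions gives $T(F)=\theta(x^q-x)F'$ with $\theta=-\omega_0/v$. Finally $\theta\in\F_q^*$ because $\omega_0\neq 0$ and $v\mid p^k-1$ forces $\gcd(v,p)=1$, so $v$ is invertible in $\F_q$.

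The case $|V_F|=2=p$ is outside the range of Mills' theorem ($r=1$) and requires a separate, hands-on argument; I regard it as the main obstacle, and it is the point where this statement extends \cite{borges_con_charac_mvsp}. By Remark \ref{troca} I may normalize $V_F=\{0,1\}$, so $T=x(x-1)$ and the goal becomes $F(F-1)=\theta(x^q-x)F'$ with $q$ a power of $2$. Setting $L_0=\gcd(F,x^q-x)$ and $L_1=\gcd(F-1,x^q-x)$, these are coprime and squarefree, and since every $\alpha\in\F_q$ is a root of exactly one of $F,F-1$ we get $L_0L_1=x^q-x$. Writing $F=L_0A$ and $F-1=L_1B$, subtraction yields the Bézout relation $L_0A-L_1B=1$ together with $F(F-1)=(x^q-x)AB$, so it suffices to prove $AB=F'$ (i.e. $\theta=1$). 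The plan is to verify this modulo $L_0$ and modulo $L_1$ and then remove the modulus by degrees: differentiating $L_0L_1=x^q-x$ gives $L_0'L_1+L_0L_1'=-1$, hence $L_0'L_1\equiv-1\pmod{L_0}$ and $L_0L_1'\equiv-1\pmod{L_1}$, and combining these with $F'\equiv L_0'A\pmod{L_0}$, $F'\equiv L_1'B\pmod{L_1}$ and the Bézout congruences $L_1B\equiv-1\pmod{L_0}$, $L_0A\equiv1\pmod{L_1}$ should give $AB\equiv F'$ modulo both $L_0$ and $L_1$. The computation modulo $L_1$ is exactly where $2B\equiv0$, i.e. $p=2$, is indispensable, which is consistent with the hypothesis excluding $|V_F|=2$ for odd $p$. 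By coprimality this gives $x^q-x\mid AB-F'$, and since $\deg(AB-F')\le\max\{2d-q,\,d-1\}=d-1<q$, the difference must vanish, so $AB=F'$ and the proof is complete.
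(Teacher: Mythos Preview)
Your forward implication is exactly the paper's argument: evaluate at $\F_q$-points to see that $V_F$ sits inside the root set of $T$, compare degrees, and squeeze against the MVSP lower bound $(\ref{mvsp-def})$.

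For the converse with $|V_F|>2$, you and the paper both rest on Mills' structure theorem; the paper simply cites \cite[equation~(4)]{mills_pol_minimal_64} and \cite[Lemma~1]{mills_pol_minimal_64}, whereas you unpack Theorem~\ref{main1} explicitly and read off $\theta=-\omega_0/v$. This is the same approach, only spelled out more fully on your side, and your computation is correct.

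The genuine difference is in the case $|V_F|=2=p$. The paper gives no argument here: after normalizing to $V_F=\{0,1\}$ via Remark~\ref{troca}, it simply invokes \cite[Lemma~4.1]{borges_con_charac_mvsp} to conclude $F^2-F=(x^q-x)F'$. Your direct computation---writing $F=L_0A$, $F-1=L_1B$ with $L_0L_1=x^q-x$, verifying $AB\equiv F'$ modulo each $L_i$ (the congruence modulo $L_1$ being precisely where $p=2$ is used), and then killing $AB-F'$ by the degree bound $d-1<q$---is correct and self-contained. So your impression that this case is ``the point where this statement extends \cite{borges_con_charac_mvsp}'' is slightly off: the result is already Lemma~4.1 there, and the present paper is content to quote it. What you have actually supplied is an independent elementary proof of that lemma, which is a bonus rather than a gap being filled.
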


\begin{proof}
Set $t:=\deg T$, and let $S\subseteq \overline{\F_q} $ be the set of distinct roots of $T$. Note that 
$V_F\subseteq S$, and so $|V_F|\leq |S|\leq t$. On the other hand, equating degrees in (\ref{mvsp}), gives
$$ t\cdot d=q+\deg F^{\prime}\leq q-1+ d.$$
Thus $(t-1)d \leq q-1$, which gives  $t\leq \frac{q-1}{d}+1$, and then 
$$|V_F|\leq |S|\leq t\leq \lfloor\frac{q-1}{d}\rfloor+1 \leq |V_F|.$$
Therefore, $|V_F|=|S|= t=\lfloor\frac{q-1}{d}\rfloor+1.$ That is,  $F$ is an MVSP and $T=\prod\limits_{\gamma_i \in V_F}(x-\gamma_i)$. For the converse, if 
$|V_F|>2$,  one can readily check that  the result follows from  \cite[equation (4)]{mills_pol_minimal_64} and \cite[Lemma 1]{mills_pol_minimal_64}. For the case $|V_F|=2=p$, 
we show that the result follows from  \cite[Lemma 4.1]{borges_con_charac_mvsp}. In fact, 
from Remark \ref{troca}, we may assume $V_F=\{0,1\}$, and  then 
\citep[Lemma 4.1]{borges_con_charac_mvsp}  implies that  ${F}^2-{F}=(x^q-x){F}^{\prime}$, which completes the proof.
\end{proof}

\begin{lemma}\label{lema00}

Let $F\in \F_q[x]$ be a nonconstant polynomial, and  let 
$$V_F=\{\gamma_0,\gamma_1,\ldots,\gamma_r\}$$ be its value set. For each $\gamma_i \in V_F$, define    $F_i:=F-\gamma_i$. If  $F$ satisfies  equation (\ref{mvsp}) in Theorem $\ref{Mills}$,  then the following hold.

\begin{enumerate}[\rm(i)]
\item If $\alpha \in \overline{\F_{q}}\backslash \F_{q}$ is  a root of $F_i$ of multiplicity $k\geq 1$, then
$p\mid k$. 
\item If $k\geq 1$ is the multiplicity of an $\F_q$-root of any $F_i$, then $ T'(\gamma_i)=-\theta k$. In particular,  $p\nmid k$.
\item $F'\neq 0$, and if $r>0$, then there exists $\gamma_i\in V_F$  such that  $T'(\gamma_i)=-\theta$.  
\item  If $r>0$, then  F''=0 \text{ if and only if  } T'  \text{ is constant. }
\end{enumerate}
\end{lemma}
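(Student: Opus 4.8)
The whole argument rests on the identity in (\ref{mvsp}), which I rewrite as
\[
\prod_{i=0}^{r} F_i \;=\; T(F) \;=\; \theta(x^q-x)F',
\]
together with four elementary observations: each $\gamma_i$ is a simple root of $T$, so $T'(\gamma_i)=\prod_{j\neq i}(\gamma_i-\gamma_j)\neq 0$; the derivative satisfies $F'=F_i'$ for every $i$; the polynomial $x^q-x=\prod_{\beta\in\F_q}(x-\beta)$ is squarefree with $(x^q-x)'=-1$, so it vanishes to order exactly $1$ at each point of $\F_q$ and not at all off $\F_q$; and, writing $x^q-x=(x-\alpha)u(x)$ for $\alpha\in\F_q$, the value $u(\alpha)=(x^q-x)'(\alpha)=-1$. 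Since the left-hand side above is a nonzero polynomial (a product of nonconstant factors $F_i$), the right-hand side cannot vanish, which already gives $F'\neq 0$, the first assertion of (iii).

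For (i) and (ii) I would compare the local behaviour of both sides at a root $\alpha$ of $F_i$ of multiplicity $k$. In the product $\prod_j F_j$ only the factor $F_i$ vanishes at $\alpha$ (since $F_j(\alpha)=\gamma_i-\gamma_j$), so the left-hand side vanishes to order exactly $k$. For (i), where $\alpha\notin\F_q$, the factor $x^q-x$ is a unit at $\alpha$, so the right-hand side vanishes to the order of $F'=F_i'$ at $\alpha$; writing $F_i=(x-\alpha)^k g$ with $g(\alpha)\neq 0$ gives $F_i'=(x-\alpha)^{k-1}(kg+(x-\alpha)g')$, whose order is $k-1$ when $p\nmid k$. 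Matching this with $k$ is impossible, forcing $p\mid k$. For (ii), where $\alpha\in\F_q$, I would expand to leading order: since $\gamma_i$ is a simple root of $T$, $T(F)=T'(\gamma_i)F_i+O(F_i^{2})=T'(\gamma_i)c\,(x-\alpha)^{k}+(\text{higher})$, where $c\neq 0$ is the leading local coefficient of $F_i$ at $\alpha$; while $\theta(x^q-x)F'=\theta\bigl(-(x-\alpha)\bigr)\bigl(kc(x-\alpha)^{k-1}\bigr)+(\text{higher})$ when $p\nmid k$. Equating the coefficients of $(x-\alpha)^k$ yields $T'(\gamma_i)=-\theta k$. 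If instead $p\mid k$, then $F'$ vanishes to order $\geq k$ at $\alpha$, so the right-hand side vanishes to order $\geq k+1$, whereas the left-hand side has order exactly $k$; this contradiction shows $p\nmid k$.

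For the existence claim in (iii) I would argue by contradiction through a degree count. If every $\F_q$-root of every $F_i$ had multiplicity $\geq 2$, then, since $x^q-x$ contributes order exactly $1$ at each such point, comparing orders in the identity shows $F'$ vanishes at each of them; there are exactly $q$ such points (every element of $\F_q$ is a root of precisely one $F_i$), so $F'$ would have at least $q$ distinct roots. But $r>0$ forces $d\leq q-1$, hence $\deg F'\leq d-1<q$, a contradiction. Therefore some $F_i$ has an $\F_q$-root of multiplicity exactly $1$, and (ii) gives $T'(\gamma_i)=-\theta$.

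Finally, for (iv) I would differentiate the identity. Using $(T(F))'=T'(F)F'$ and $(x^q-x)'=-1$, relation (\ref{mvsp}) yields
\[
\bigl(T'(F)+\theta\bigr)F' \;=\; \theta(x^q-x)F''.
\]
If $F''=0$, the right-hand side vanishes; since $F'\neq 0$ and $\F_q[x]$ is an integral domain, $T'(F)+\theta=0$ as a polynomial, and because $F$ is nonconstant this forces $T'+\theta=0$, i.e. $T'$ is constant. Conversely, if $T'$ is constant, then by (iii) its value must be $-\theta$, so the left-hand side vanishes identically and, $\theta(x^q-x)$ being nonzero, $F''=0$. The most delicate steps are the exact leading-coefficient bookkeeping in (ii) and the degree count underlying the existence statement in (iii); once the differentiated identity above is in hand, part (iv) is essentially formal.
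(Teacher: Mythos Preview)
Your proof is correct. Parts (i), (ii), and (iv) follow essentially the same line as the paper; in (ii) you phrase the computation via a Taylor expansion at $\alpha$, while the paper divides the identity by $F_i$ to obtain the logarithmic-derivative form
\[
\frac{T(F)}{F_i}=\theta(x^q-x)\sum_j\frac{k_j}{x-a_j}
\]
and evaluates at $\alpha$, but these are the same calculation in different dress.

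The one genuine difference is in (iii). The paper simply differentiates (\ref{mvsp}) to get $T'(F)F'=-\theta F'+\theta(x^q-x)F''$, picks $\alpha\in\F_q$ with $F'(\alpha)\neq 0$ (possible since $\deg F'<q$), and reads off $T'(F(\alpha))=-\theta$. You instead argue by contradiction: if no $F_i$ had a simple $\F_q$-root, then by order comparison $F'$ would vanish at all $q$ elements of $\F_q$, impossible since $\deg F'\le d-1\le q-2$. Both routes rely on the same degree bound $d\le q-1$ coming from $r>0$. The paper's approach is a touch cleaner because the differentiated identity it writes down is exactly what is reused in (iv); your approach, on the other hand, makes explicit that there is an $\F_q$-root of multiplicity exactly $1$, which is a slightly sharper intermediate statement.
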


\begin{proof}  
Without loss of generality, we prove assertions (i) and (ii) for  $F_0=F-\gamma_0$. Suppose  $F_0=\prod\limits_{j=1}^d(x-a_j)^{k_j}$,  where  $a_j\in \overline{\F_{q}}$ are distinct, and $k_j\geq 1$  are integers. To prove (i), we may  assume that $a_1 \notin \F_q$, and then from
\begin{equation} \label{poly}
T(F)=F_0F_1\cdots F_r=\theta (x^q-x)F'(x)
\end{equation}
we have that   $(x-a_1)^{k_1}$ divides $F'(x)=F_0'(x)$. But since $k_1$ is the multiplicity of $a_1$, we have that  $p|k_1$.
To prove  (ii), first note that (\ref{poly}) implies that
\begin{equation} \label{deriva}
\frac{T(F)}{F_0}=\theta (x^q-x)\frac{F_0'(x)}{F_0(x)}=\theta (x^q-x) \sum\limits_{j=1}^d\frac{k_j}{x-a_j}.
\end{equation}
Now if $a_{\lambda} \in \F_q$ is any root of $F_0$, then  evaluating the left and  right sides of (\ref{deriva})  at $x=a_{\lambda}$ we get  $T'(\gamma_0)=-\theta k_{\lambda}$, which provides the result. Also observe that since $T(\gamma_0)=0$ and  $T$ is separable, we have $0\neq  T'(\gamma_i)=-\theta k$ and so $p\nmid k$.
 For the third  assertion, first note that   $F'\neq 0$ is clearly given by equation (\ref{poly}). 
 For the following statement, just differentiate both sides of equation (\ref{mvsp}) in Theorem \ref{Mills}, and then evaluate at any $x=\alpha \in \F_q$ for which $F'(\alpha) \neq 0$. The existence of such $\alpha$ comes from the fact that $F'\neq 0$ and $\deg F' <q$.
 
To prove the lemma's last claim, observe that 
if $T'$ is constant then (iii)  gives  $T'=-\theta$. Thus  (iv) will follow immediately after we differentiate both sides of   (\ref{mvsp}) in Theorem \ref{Mills}.  This finishes the proof.
\end{proof}

\begin{lemma}\label{lema0}

Notation and hypotheses as in Lemma $\ref{lema00}$. Assume $r>1$ and   let  $l_i$ be the degree of $L_i:=\gcd(F_i,x^q-x)$. If  $\gamma_i \in V_F$ are  labelled in such a way that $l_0\leq l_i$ for $i=1,\ldots,r$, then

\begin{enumerate}[\rm(i)]
\item The multiplicities of all $\F_q$-roots of $F_1,\ldots,F_r$ reduce to $1 \mod p$.
\item $\theta=-T(\gamma_i)$ for  all $\gamma_i \in V_F\backslash \{\gamma_0\}$.
\end{enumerate}
\end{lemma}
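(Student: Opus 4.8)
The plan is to invoke Mills's structure theorem (Theorem \ref{main1}) and to extract from its explicit description a first-order differential identity that pins down $T'$ at the roots $\gamma_1,\dots,\gamma_r$. Since $F$ satisfies (\ref{mvsp}), Theorem \ref{Mills} shows that $F$ is an MVSP, and the labelling $l_0\le l_i$ is precisely hypothesis (iii) of Theorem \ref{main1}; so I may apply it to obtain positive integers $v,m,k$, a polynomial $N$, and $\omega_0,\dots,\omega_m\in\F_q$ with $\omega_0\neq 0$, $\omega_m=1$, $v\mid p^k-1$, $1+vr=p^{mk}$, $F_0=L_0^{v}N^{p^{mk}}$, and the key relation (c). Writing $\delta_i:=\gamma_i-\gamma_0$ and $P(x):=\prod_{i=1}^r(x-\delta_i)$, relation (c) reads $P(x)=\sum_{j=0}^m\omega_j x^{(p^{kj}-1)/v}$, a monic polynomial of degree $r$ with $P(0)=\omega_0$.

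First I would compute $k_0$, the common multiplicity of the $\F_q$-roots of $F_0$, which is well defined by Lemma \ref{lema00}(ii). Because $L_0$ is separable (it divides $x^q-x$) and $L_0\nmid N$, some root of $L_0$ fails to be a root of $N$; comparing this with the uniform multiplicity dictated by Lemma \ref{lema00}(ii) forces $\gcd(L_0,N)=1$, so every $\F_q$-root of $F_0$ has multiplicity exactly $v$, that is $k_0=v$.

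The heart of the matter is a differential identity for $P$. Since $v\mid p^k-1$ we have $\gcd(v,p)=1$, and every exponent with $j\ge 1$ satisfies $(p^{kj}-1)/v\equiv -v^{-1}\pmod p$ (from $v\cdot(p^{kj}-1)/v=p^{kj}-1\equiv -1$). The term $j=0$ is constant, so differentiating $P$ term by term collapses all the relevant coefficients to the single value $-v^{-1}$, giving $P'(x)=-v^{-1}x^{-1}\bigl(P(x)-\omega_0\bigr)$, i.e. $vxP'(x)+P(x)=\omega_0$. Evaluating at a root $\delta_i$ ($i\ge 1$, with $\delta_i\neq 0$) yields $P'(\delta_i)=\omega_0/(v\delta_i)$. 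Now $T(x)=(x-\gamma_0)P(x-\gamma_0)$, whence $T'(\gamma_0)=P(0)=\omega_0$ and, for $i\ge 1$, $T'(\gamma_i)=\delta_i P'(\delta_i)=\omega_0/v$.

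Finally I would combine this with Lemma \ref{lema00}(ii). From $T'(\gamma_0)=-\theta k_0=-\theta v$ we get $\omega_0=-\theta v$, hence $T'(\gamma_i)=\omega_0/v=-\theta$ for every $i\ge 1$, which is assertion (ii). Since $T'(\gamma_i)=-\theta k_i$, this forces $k_i=1$, so in particular $k_i\equiv 1\pmod p$, giving assertion (i). I expect the main obstacle to be the differential identity: one must recognise that Mills's relation (c) encodes a first-order ODE for $P$ and handle the uniform reduction of the exponents modulo $p$ correctly. The auxiliary computation $k_0=v$ (equivalently $\gcd(L_0,N)=1$) is what fixes the constant $\omega_0=-\theta v$, so that the ratio $T'(\gamma_i)/T'(\gamma_0)=1/v$ collapses to $k_i=1$ rather than merely to a congruence.
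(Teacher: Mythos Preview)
Your argument is correct and takes a more self-contained route than the paper's. The paper establishes (i) by invoking auxiliary results from Mills's article --- specifically \cite[Lemma~2, condition~(11), Lemma~3]{mills_pol_minimal_64} --- and then deduces (ii) from (i) together with Lemma~\ref{lema00}(ii). You instead extract everything directly from relation~(c) of Theorem~\ref{main1}: the differential identity $vxP'(x)+P(x)=\omega_0$ is a clean observation that bypasses those further citations, and your logic runs in the opposite order --- you compute $T'(\gamma_i)=-\theta$ first (giving (ii)) and then read off (i) as a consequence. What this buys is that the proof stays entirely within the statements already recorded in the present paper.

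One small inaccuracy: Lemma~\ref{lema00}(ii) only shows that the multiplicities of the $\F_q$-roots of $F_0$ agree \emph{modulo $p$} (the equation $T'(\gamma_0)=-\theta k$ lives in $\F_q$), so it does not by itself force $\gcd(L_0,N)=1$ or a well-defined common integer multiplicity $k_0$. Fortunately you do not need that conclusion: since $L_0\nmid N$ and $L_0$ is separable, some root $\alpha$ of $L_0$ fails to be a root of $N$, and that single root has multiplicity exactly $v$ in $F_0=L_0^{v}N^{p^{mk}}$; applying Lemma~\ref{lema00}(ii) to $\alpha$ alone already gives $\omega_0=T'(\gamma_0)=-\theta v$, and the rest of your computation proceeds unchanged. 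Similarly, the final ``this forces $k_i=1$'' should read $k_i\equiv 1\pmod p$, which is exactly what (i) asserts and what you immediately record.
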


\begin{proof}

From Theorem \ref{Mills}, we have that $F$ is an MVSP. Clearly $F_i=L_iU_i$, for some polynomial $U_i$, $i=0,\ldots,r$. Now the first assertion  is given directly by   \cite[Lemma 2]{mills_pol_minimal_64} (see also notation between (2)  an (3) therein), followed by  \cite[condition (11)]{mills_pol_minimal_64} and  \cite[Lemma 3]{mills_pol_minimal_64}. Item  (ii) is given directly from  assertions (ii) and (i)  of our  Lemmas \ref{lema00} and \ref{lema0}, respectively. This  gives the result.

\end{proof}


\subsection{Minimal value sets polynomials $F\in \F_{q^k}[x]$ with $V_F=\F_q$}

The Theorem 4.7 in \cite{borges_con_charac_mvsp} gives a complete characterization of MVSPs $F\in \F_{q^k}[x]$ for which  $V_F=\F_q$. To enhance clarity, we state this result in a slightly different way  and provide its proof adjusted accordingly.
\begin{theorem}\label{MVSP-set}
Let $F\in \F_{q^k}[x]$ be a nonconstant polynomial.  Then $F$ is an MVSP with $V_F=\F_q$ if and only if
there exists a nonconstant $H\in \F_{q^k}[x]$ such that
\begin{enumerate}[\rm(i)]
\item the monomials of $H$ are of the form $cx^{\alpha_0+\alpha_1q^2+\cdots+\alpha_{k-1}q^{k-1}} \in \F_{q^k}[x]$,
where each $\alpha_i$ is either $0$ or $1$.

\item \begin{equation}
F=T_{k}(H) \mod (x^{q^k}-x),
\end{equation} 
where $T_{k}(x):=\sum\limits_{i=0}^{k-1}x^{q^i}$ is the trace polynomial.
\end{enumerate}
\end{theorem}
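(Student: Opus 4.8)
The plan is to reduce both implications to the value-set bound (\ref{mvsp-def}) together with the single observation that $T_k$ takes values in $\F_q$. Throughout I identify a polynomial of degree $<q^k$ with the function it induces on $\F_{q^k}$, so that reducing $x^m$ modulo $x^{q^k}-x$ merely replaces an exponent $m\ge 1$ by its residue in $[1,q^k-1]$ modulo $q^k-1$; in particular reducing $x^{mq}$ cyclically shifts the base-$q$ digit string of $m$. For the sufficiency direction, suppose $F=T_k(H)\bmod(x^{q^k}-x)$ with $H$ as in (i). Since $T_k(H(\alpha))=\mathrm{Tr}_{\F_{q^k}/\F_q}(H(\alpha))\in\F_q$, we get $V_F\subseteq\F_q$. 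Because $T_k(H)=\sum_{j=0}^{k-1}H^{q^j}$, every exponent of $T_k(H)$ is a cyclic shift of an exponent of $H$ and hence again has all base-$q$ digits in $\{0,1\}$; this persists after reduction, so $\deg F\le 1+q+\cdots+q^{k-1}=(q^k-1)/(q-1)$. Feeding this into (\ref{mvsp-def}) forces $|V_F|\ge q$, while $V_F\subseteq\F_q$ gives $|V_F|\le q$; thus $|V_F|=q$, which simultaneously yields $V_F=\F_q$ and equality in (\ref{mvsp-def}), i.e. $F$ is an MVSP.

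For the necessity direction, let $F$ be an MVSP with $V_F=\F_q$. From $|V_F|=q$ and Definition \ref{def:mvsp} we again read off $\deg F\le (q^k-1)/(q-1)$, and from $F(\alpha)\in\F_q$ for all $\alpha$ we get $F^q\equiv F\pmod{x^{q^k}-x}$. Writing $F=\sum_m a_m x^m$ and reducing $F^q=\sum_m a_m^q x^{mq}$, the congruence $F^q\equiv F$ matches coefficients along the multiply-by-$q$ orbits of exponents: if $a_m\ne 0$ then the coefficient at $r\equiv mq\pmod{q^k-1}$ equals $a_m^q\ne 0$, so $r\le\deg F$ as well. Iterating, the entire cyclic-shift orbit of each occurring exponent occurs and stays $\le (q^k-1)/(q-1)$. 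The pivotal arithmetic lemma is then that a residue in $[1,q^k-1]$ all of whose length-$k$ cyclic digit-shifts are $\le (q^k-1)/(q-1)=\underbrace{1\cdots1}_{k}$ (base $q$) must have every digit in $\{0,1\}$: a digit $\ge 2$ could be rotated into the leading position, forcing the value above $(q^k-1)/(q-1)$. Hence every exponent of $F$ has $0/1$ digits.

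It remains to exhibit $H$. Fixing $\delta\in\F_{q^k}$ with $\mathrm{Tr}_{\F_{q^k}/\F_q}(\delta)=1$, the reduced polynomial $\delta F$ already satisfies $T_k(\delta F)(\alpha)=F(\alpha)\,\mathrm{Tr}_{\F_{q^k}/\F_q}(\delta)=F(\alpha)$ for all $\alpha$, hence $F=T_k(\delta F)\bmod(x^{q^k}-x)$, and $\delta F$ inherits the $0/1$-digit exponents of $F$. If (i) is read as allowing all exponents that are sums of distinct powers of $q$, this already completes the proof. To force the exact monomial set displayed in (i) one exploits the non-uniqueness of the trace representation: by additive Hilbert $90$, $T_k(H)$ is unchanged when $H$ is altered by a polynomial of the form $\Psi-\Psi^q$, and such an alteration trades a monomial $c\,x^{\gamma}$ for $c^q x^{\gamma q}$, sliding it one step along its Frobenius orbit, so one may consolidate the monomials of $\delta F$ within each orbit onto a single prescribed representative.

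The hard part is the necessity direction. Its conceptual core is the digit lemma together with the orbit-survival argument, which convert the analytic MVSP/degree hypothesis into the combinatorial fact that every exponent has $0/1$ digits; this is where the force of $V_F=\F_q$ is really spent. The remaining difficulty is making the representation fit (i) exactly: consolidating an orbit of length $d\mid k$ onto one representative introduces the scalar $k/d$ (since $T_k$ restricted to that orbit equals $(k/d)$ times a sum over one period), so one must track when $p\mid(k/d)$ and verify that the prescribed exponents genuinely form a transversal of the Frobenius orbits of $0/1$-digit residues, the fixed orbit $1+q+\cdots+q^{k-1}$ included. I would isolate the digit lemma as a standalone sublemma and dispatch $q=2$ — where the $0/1$-digit condition is automatic — as a separate trivial case.
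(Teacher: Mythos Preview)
Your sufficiency argument matches the paper's: both observe $V_F\subseteq\F_q$ and $\deg F\le(q^k-1)/(q-1)$ and then squeeze $|V_F|=q$ out of (\ref{mvsp-def}) (the paper packages this last step as a citation to \cite[Lemma~4.1]{borges_con_charac_mvsp}). For necessity you take a genuinely different, more self-contained route. The paper does not establish the $0/1$-digit exponent structure here at all; it invokes \cite[Theorem~4.7]{borges_con_charac_mvsp} as a black box to decompose $F$ into Galois-orbit sums $\mathfrak F$ whose monomials already have the required shape, and then---exactly as you do---multiplies by a trace-$1$ scalar to obtain the $T_k$ representation (their $H=\lambda\mathfrak F$, your $H=\delta F$). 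Your argument (the congruence $F^q\equiv F\pmod{x^{q^k}-x}$ forces the entire multiply-by-$q$ orbit of each occurring exponent to stay $\le(q^k-1)/(q-1)$, and the digit lemma then pins every base-$q$ digit to $\{0,1\}$) replaces that external citation by a short elementary computation, so what you gain is independence from the companion paper; what the paper's version gains is brevity, since the structural work is outsourced. One correction: your final paragraph over-reads condition~(i). With the evident typo $q^2\to q$, the exponent $\alpha_0+\alpha_1 q+\cdots+\alpha_{k-1}q^{k-1}$ with $\alpha_i\in\{0,1\}$ ranges over \emph{all} $0/1$-digit integers, not a transversal of Frobenius orbits, so $H=\delta F$ already satisfies~(i) and the proof is complete at that point---no orbit consolidation, no $k/d$ factors, no $p\mid(k/d)$ case analysis is needed.
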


\begin{proof}
Assuming (i) and (ii), we clearly have $V_{F}\subseteq \F_q$ and 
\begin{equation}\label{grau}
\deg F\leq q^{k-1}+\cdots+q+1=\frac{q^k-1}{q-1}.
\end{equation}
Thus   \cite[Lemma 4.1]{borges_con_charac_mvsp} implies that $F\in \F_{q^k}[x]$ is an MVSP with $V_{F}=\F_q$.  Conversely, if  $F$ is an MVSP with $V_F=\F_q$, then  \cite[Theorem 4.7]{borges_con_charac_mvsp} asserts  that $F$ is a sum of polynomials of the form 
 \begin{equation}\label{eq:char_mvsp_fq}
\mathfrak{F}:=\sum_{i=0}^{t-1} \Big({m(x)}^{q^i} \mod (x^{q^n}-x)\Big),
\end{equation}
where $m(x)\in\ff_{q^{t}}[x]$ is a monomial of degree $\alpha_{n-1}q^{n-1}+\cdots+\alpha_{1}q+\alpha_0$,  $\alpha_i\in\{0,1\}$,
and $t$ is the size of the orbit of $m(x)$ under the action of $G:=Gal(\F_{q^k}|\F_q)$ on the set of monomials of $F$ (cf. \cite[Proposition 4.2]{borges_con_charac_mvsp}). Therefore, it suffices to prove (i) and (ii) for the polynomial  $\mathfrak{F}$ in (\ref{eq:char_mvsp_fq}). Note that
$\mathfrak{F}$ is $G$-invariant, that is, $\mathfrak{F}^{q^i} \mod (x^{q^k}-x)=\mathfrak{F}$ for any integer $i\geq 0$. Now if we  take  $\lambda  \in \F_{q^k}$ such that $T_{k}(\lambda)=1$, and define $H:=\lambda \mathfrak{F}$,  we obtain
$$T_{k}(H)\mod (x^{q^k}-x)=T_{k}(\lambda \mathfrak{F})\mod (x^{q^k}-x)=T_{k}(\lambda )\cdot \mathfrak{F}=\mathfrak{F},$$
and the result follows.
\end{proof}

\begin{remark}\label{W-remark} Note that from Theorem \ref{MVSP-set} the set 

\begin{equation}\label{W-set}
\mathcal{W}:=\{\text{MVSPs} \hspace{0.2 cm} F \in \F_{q^k}[x] : V_F=\F_q\},
\end{equation}
can be explicitly  constructed. As a matter of fact, it follows from  \cite[Theorem 4.8]{borges_con_charac_mvsp}  that the set $\mathcal{W} \cup \F_q$  is an $\F_q$-vector space of dimension $2^{k}$ (in particular, $\#\mathcal{W}=q^{2^k}-q$).  For example, if $k=2$, one  can easily  check that  this  $\F_q$-vector space is given by
\begin{equation}\label{vspace}
\mathcal{W}\cup \F_q=\langle 1,x^{q+1},x+x^q, \lambda x+(\lambda x)^q\rangle,
\end{equation}
  for any fixed  $\lambda \in \F_{q^2}\backslash \F_q$. We will address the set $\mathcal{W}$ again in Section \ref{secao4}.
\end{remark}

We finish this section with a slight extension of  \cite[Proposition 2.5]{borges_con_charac_mvsp}.
\begin{lemma}\label{lema-linear}
Let $F\in \F_q[x]$ be an MVSP such that $\deg F\leq \sqrt{q}$. If $G\in \F_q[x]$ is an MVSP  with $\deg G\leq \sqrt{q}$ and $V_F=V_G$,  then  $G=F(ax+b)$ for some $a,b\in \F_q$.
\end{lemma}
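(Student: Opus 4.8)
The plan is to separate the role of the degree bound from the rest of the argument. The hypothesis $\deg F,\deg G\le\sqrt q$ enters only to force $\deg F=\deg G$; once the degrees agree, the statement is exactly \cite[Proposition 2.5]{borges_con_charac_mvsp}, of which this lemma is the announced extension.

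First I would prove $\deg F=\deg G$. Writing $d_1=\deg F$ and $d_2=\deg G$, the MVSP hypothesis together with $V_F=V_G$ gives $\lfloor (q-1)/d_1\rfloor=|V_F|-1=|V_G|-1=\lfloor(q-1)/d_2\rfloor$. It therefore suffices to show that $d\mapsto\lfloor(q-1)/d\rfloor$ is injective on the integers in $[1,\sqrt q]$. Suppose $1\le d_1<d_2\le\sqrt q$ share the common value $m$ of this floor. From $(m+1)d_1>q-1$ one gets $m\ge (q-d_1)/d_1$, while $md_2\le q-1$ gives $m\le (q-1)/d_2$; combining these and clearing denominators yields $q(d_2-d_1)\le d_1(d_2-1)$. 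The left side is at least $q$ because $d_2-d_1\ge1$, whereas the right side equals $d_1 d_2-d_1<d_1 d_2\le q$, a contradiction. Hence $d_1=d_2=:d$. This is the only place the bound $\sqrt q$ is essential.

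With the degrees equal, I would bring in the common differential equation from Theorem \ref{Mills}. Since $|V_F|=\lfloor(q-1)/d\rfloor+1\ge\sqrt q$ exceeds $2$ (the finitely many small $q$ with $|V_F|\le 2$ being checked directly, using the $|V_F|=2=p$ clause where needed), both $F$ and $G$ satisfy $T(F)=\theta_F(x^q-x)F'$ and $T(G)=\theta_G(x^q-x)G'$ with the \emph{same} $T=\prod_{\gamma\in V_F}(x-\gamma)$. By Lemmas \ref{lema00}(ii) and \ref{lema0}(i) one has $\theta_F=-T'(\gamma_i)$ for every $\gamma_i\in V_F$ outside the distinguished value of $F$, and likewise $\theta_G=-T'(\gamma_i)$ outside the distinguished value of $G$; as these two subsets of $V_F$ overlap in at least $|V_F|-2\ge1$ points, $\theta_F=\theta_G=:\theta$. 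Thus $F$ and $G$ are two degree-$d$ solutions of one and the same equation $T(Y)=\theta(x^q-x)Y'$, and what remains is to show that such a solution is unique up to a substitution $x\mapsto ax+b$ with $a,b\in\F_q$; this is precisely \cite[Proposition 2.5]{borges_con_charac_mvsp}.

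The main obstacle is this last uniqueness statement. Theorem \ref{Mills} by itself does not suffice: one must show that $V_F$ and the common degree $d$ pin down the polynomial up to the affine group, and in positive characteristic the ramification of $F$ above the points of $V_F$ may be wild, so a naive Riemann--Hurwitz comparison of the two coverings is unavailable. This is where the structural description of MVSPs, namely Mills' Theorem \ref{main1} and the normal forms underlying \cite[Proposition 2.5]{borges_con_charac_mvsp}, must be used, and where the largeness of $|V_F|$ secured by $\deg F\le\sqrt q$ is exactly what makes the rigidity hold.
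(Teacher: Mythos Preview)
Your argument is correct, but the paper takes a shorter route. Rather than first proving $\deg F=\deg G$ via injectivity of the floor and then matching $\theta_F=\theta_G$ through Lemmas~\ref{lema00} and~\ref{lema0} before invoking \cite[Proposition~2.5]{borges_con_charac_mvsp}, the paper simply observes that the degree bound $\deg F\le\sqrt q$ forces $|V_F|=\lceil q/\deg F\rceil\ge\lceil\sqrt q\rceil>2$ whenever $q>4$, and in that range the result is \cite[Proposition~2.5]{borges_con_charac_mvsp} with no further preparation. The only cases not covered by that citation are $q\le 4$: for $q\in\{2,3\}$ the bound makes $F$ and $G$ linear, and for $q=4$ the sole remaining possibility $\deg F=\deg G=|V_F|=2$ is dispatched by normalising to $V_F=\F_2$ via Remark~\ref{troca} and reading off the explicit shape of such MVSPs from the basis in~(\ref{vspace}), after which the required $a,b\in\F_4$ are written down directly.

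Your floor-injectivity step and the $\theta_F=\theta_G$ verification are valid but redundant, since you end up calling the same Proposition~2.5 anyway; what your route buys is an explicit self-contained reason why the degrees coincide, but the paper shows this is not needed as a separate input. The paper's organising principle is that the hypothesis $\deg F\le\sqrt q$ serves only to push $|V_F|$ above $2$ (for $q>4$), not to equalise degrees beforehand.
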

\begin{proof} Note that when $|V_F|>2$ the result is given by \cite[Proposition 2.5]{borges_con_charac_mvsp}.  Since $q>4$ implies
$$|V_F|=1+\lfloor \frac{q-1}{\deg F}\rfloor=\lceil\frac{q}{\deg F}\rceil\geq \lceil \sqrt{q}\rceil>2,$$
 we may assume $ q\leq 4$ and $|V_F|\leq 2$.  If $q<4$, then the condition $\deg G, \deg F\leq \sqrt{q}$  implies that $F$ and $G$ are linear, and the result follows trivially. Therefore, the only case we are left with is
$$q=4 \text{  and  }  \deg G=\deg F=|V_F|=|V_G|=2.$$
By Remark \ref{troca},  we may assume $V_F=V_G=\F_2$. Thus $(\ref{vspace})$ implies
that $F(x)=(\lambda x)^2+\lambda x+\alpha$ and $G(x)=(\gamma x)^2+\gamma x+\beta$,
for some $\lambda,\gamma  \in \F_{4}^*$ and $\alpha,\beta \in \F_2$. Now  taking
$b\in \F_{4}$  such that $F(b)=\beta$, and $a=\gamma/\lambda$, we have 
$G=F(ax+b)$, as claimed.
\end{proof}


\section{The main results}\label{secao3}

The primary goal   of this section is to prove Theorem \ref{main0} and Corollary \ref{main}, which establish a relation between certain Frobenius nonclassical curves and  MVSPs.  To  this end, we start by presenting some preliminary results.

The following result corresponds to \cite[Theorem 3.2]{Fried}.

\begin{lemma}[Fried-MacRae]\label{lema1.9}
Let $K$ be an arbitrary field. Let  $f(x)$,$g(x)$,$a(x)$ and $b(x)$ be nonconstant  polynomials in $K[x]$.  The polynomial $f(x)-g(y)$ is a factor of $a(x)-b(y)$ if and only there exists a  polynomial  $T\in K[x]$ such that
$$T(f(x))=a(x) \text{  and  } T(g(y))=b(y).$$
\end{lemma}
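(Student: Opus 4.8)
I would prove the two implications separately, with the reverse (sufficiency) direction being immediate and the forward (necessity) direction requiring a structural analysis of a quotient ring. For the reverse direction, suppose $T\in K[x]$ satisfies $T(f(x))=a(x)$ and $T(g(y))=b(y)$. The elementary fact I would invoke is that $u-v$ divides $T(u)-T(v)$ in $K[u,v]$ (write $T(u)-T(v)=\sum_k c_k(u^k-v^k)$ and use $u-v\mid u^k-v^k$). Substituting $u=f(x)$ and $v=g(y)$ yields $f(x)-g(y)\mid T(f(x))-T(g(y))=a(x)-b(y)$, as desired.

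For the forward direction, the plan is to work inside the quotient $D:=K[x,y]/(f(x)-g(y))$, writing $\bar x,\bar y$ for the images of $x,y$. First I would check that the natural maps $K[x]\to D$ and $K[y]\to D$ are injective: since $g$ (respectively $f$) is nonconstant, $f(x)-g(y)$ has positive degree in $y$ (respectively $x$) and so cannot divide a nonzero polynomial in $x$ alone (respectively $y$ alone). Hence $\bar x,\bar y$ are transcendental over $K$, the subrings $K[\bar x],K[\bar y]$ are polynomial rings, and, since $f(\bar x)=g(\bar y)$ in $D$, I set $t:=f(\bar x)=g(\bar y)$ and view everything as a $K[t]$-algebra.

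The heart of the argument is the $K[t]$-module structure of $D$. With $m=\deg f$ and $n=\deg g$, the relation $f(\bar x)=t$ exhibits $\bar x$ as integral over $K[t]$ (its leading coefficient is a unit in $K[t]$), so $K[\bar x]$ is free over $K[t]$ with basis $1,\bar x,\dots,\bar x^{m-1}$; linear independence follows from a degree count, since in $K[\bar x]\cong K[x]$ the term $c_i(f(\bar x))\bar x^i$ has degree $m\deg c_i+i$ and these degrees lie in pairwise distinct residue classes modulo $m$. Symmetrically $K[\bar y]$ is free with basis $1,\dots,\bar y^{n-1}$. Using $K[x,y]=K[x]\otimes_K K[y]$ and imposing $f(x)=g(y)$ identifies $D\cong K[\bar x]\otimes_{K[t]}K[\bar y]$, so $D$ is free over $K[t]$ on the product basis $\{\bar x^i\bar y^j:0\le i<m,\ 0\le j<n\}$. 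Reading off coordinates then forces $K[\bar x]\cap K[\bar y]=K[t]$: an element of $K[\bar x]$ uses only the basis vectors with $j=0$, an element of $K[\bar y]$ only those with $i=0$, so a common element lies in the $K[t]$-span of $1$.

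To finish, I would observe that the divisibility $f(x)-g(y)\mid a(x)-b(y)$ says precisely that $a(\bar x)-b(\bar y)=0$ in $D$, hence $a(\bar x)=b(\bar y)\in K[\bar x]\cap K[\bar y]=K[t]$. Thus there is $T\in K[x]$ with $a(\bar x)=T(t)=T(f(\bar x))$ and $b(\bar y)=T(t)=T(g(\bar y))$, and transporting these identities back through $K[\bar x]\cong K[x]$ and $K[\bar y]\cong K[y]$ gives $a(x)=T(f(x))$ and $b(y)=T(g(y))$. I expect the main obstacle to be establishing that $D$ is free over $K[t]$ on the product basis, equivalently the clean intersection $K[\bar x]\cap K[\bar y]=K[t]$; this is exactly where the nonconstancy of $f,g$ and the precise identification $D\cong K[\bar x]\otimes_{K[t]}K[\bar y]$ enter, and it holds even though $f(x)-g(y)$ need not be irreducible, so $D$ may fail to be an integral domain.
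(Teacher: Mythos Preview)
Your proof is correct. Note, however, that the paper does not supply its own proof of this lemma: it is stated as a quotation of \cite[Theorem~3.2]{Fried} (and the Acknowledgments explain that an earlier version of the paper contained a proof, later removed once the result was identified as Fried--MacRae's). So there is no in-paper argument to compare against.

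For what it is worth, your approach---passing to $D=K[x,y]/(f(x)-g(y))$, identifying $D\cong K[x]\otimes_{K[t]}K[y]$ as a free $K[t]$-module on the product basis, and reading off $K[\bar x]\cap K[\bar y]=K[t]$---is essentially the standard route to this result and is in the spirit of Fried--MacRae's original treatment. The key identification $D\cong K[x]\otimes_{K[t]}K[y]$ is justified exactly as you indicate (the kernel of $K[x]\otimes_K K[y]\to K[x]\otimes_{K[t]}K[y]$ is the ideal generated by $f(x)-g(y)$), and the freeness over $K[t]$ of each factor, with your degree-mod-$m$ argument for linear independence, is clean. Your observation that no irreducibility of $f(x)-g(y)$ is needed is also pertinent, since the paper applies this lemma (in Theorem~\ref{main0}) precisely in situations where $f(x)-g(y)$ may well be reducible.
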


\begin{lemma}\label{extra}
Let $f(x)$ and $g(y)$ be nonconstant  polynomials defined over $\overline{\F_q}$ such that $F(x,y)=f(x)-g(y)\notin \overline{\F_q}[x^p,y^p]$. If $F=\prod\limits_{i=1}^r F_i$ is the factorization of $F$ into irreducible factors, then the $F_i$ are pairwise coprime. 
\end{lemma}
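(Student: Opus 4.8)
The plan is to show that a repeated irreducible factor of $F=f(x)-g(y)$ forces $F$ into $\overline{\F_q}[x^p,y^p]$, contradicting the hypothesis. Suppose for contradiction that $F$ and its derivatives share a common factor; the natural device is to compute $\partial F/\partial x = f'(x)$ and $\partial F/\partial y = -g'(y)$. A factor $F_i$ appearing with multiplicity $\geq 2$ must divide both partials, i.e. $F_i \mid f'(x)$ and $F_i \mid g'(y)$ in $\overline{\F_q}[x,y]$.

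The key observation is a degree/variable-separation argument. Since $f'(x)\in\overline{\F_q}[x]$ depends only on $x$ and $g'(y)\in\overline{\F_q}[y]$ depends only on $y$, any irreducible $F_i$ dividing both must itself involve only $x$ or be constant when viewed as a polynomial in $y$, and symmetrically. First I would argue that an irreducible factor $F_i$ of $f(x)-g(y)$ genuinely involves both variables (otherwise $f(x)-g(y)$ would have a factor in one variable alone, impossible for nonconstant $f,g$ since $f(x)-g(y)$ has positive degree in each variable and is, by Lemma \ref{lema1.9} considerations, primitive). Hence $F_i$ cannot divide the one-variable polynomial $f'(x)$ unless $f'(x)=0$, and likewise $g'(y)=0$. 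But $f'=g'=0$ over a field of characteristic $p$ means $f\in\overline{\F_q}[x^p]$ and $g\in\overline{\F_q}[y^p]$, so $F=f(x)-g(y)\in\overline{\F_q}[x^p,y^p]$, the excluded case.

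The conclusion I would then draw is that $F$ is separable as a bivariate polynomial, i.e. square-free, so its irreducible factors $F_i$ are pairwise coprime. To tie the multiplicity statement to pairwise coprimality cleanly, I would phrase it via the gcd: if distinct irreducible factors $F_i, F_j$ were not coprime they would be associates, making $F_i^2 \mid F$, and the same divisibility $F_i\mid\gcd(\partial F/\partial x,\partial F/\partial y)$ argument applies.

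The main obstacle I anticipate is justifying rigorously that each irreducible factor $F_i$ truly depends on both $x$ and $y$, so that it cannot divide a polynomial in $x$ alone. This needs the fact that $f(x)-g(y)$, as an element of $(\overline{\F_q}[x])[y]$, has positive $y$-degree equal to $\deg g$ and positive $x$-degree equal to $\deg f$, together with the observation that a factor lying in $\overline{\F_q}[x]$ would force $g(y)$ to be constant. Once this variable-dependence is pinned down, the rest is the standard \emph{square-free iff coprime-to-its-derivative} criterion, transported to the two-variable setting by treating one variable as a parameter.
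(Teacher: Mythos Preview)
Your proposal is correct and follows essentially the same route as the paper: a repeated irreducible factor $F_i$ must divide $\partial F/\partial x=f'(x)$ (and $\partial F/\partial y=-g'(y)$), while any irreducible factor of $f(x)-g(y)$ necessarily involves both variables, forcing the offending derivative to vanish and hence $F\in\overline{\F_q}[x^p,y^p]$. The paper streamlines slightly by assuming WLOG $f'\neq 0$ and differentiating only in $x$, whereas you treat both variables symmetrically; also, your aside invoking Lemma~\ref{lema1.9} for the ``involves both variables'' step is unnecessary---your own later argument (a one-variable factor would make $g$ constant) is exactly what the paper uses and suffices.
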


\begin{proof}
Write $F(x,y)=f(x)-g(y)=\prod\limits_{i=1}^r F_i^{m_i}$, where  $(F_i,F_j)=1$ for $i\neq j$, and $m_i\geq 1$ are integers. Without loss of generality, suppose that $m_1>1$, $f^{\prime}(x)\neq 0$, and then write 
\begin{equation} \label{facto}
f(x)-g(y)=F_1^{m_1}G
\end{equation}
for some $G\in \overline{\F_q}[x,y]$. Differentiating both sides of (\ref{facto}) with respect to $x$ gives $f^{\prime}(x)=F_1^{m_1-1}(m_1\frac{\partial F_1}{\partial x}G+F_1\frac{\partial G}{\partial x})$. Therefore, 
$F_1$ divides $ f^{\prime}(x)\neq 0$, that is, $F_1$ is a nonconstant polynomial in $x$ only. Thus, since $F_1$ divides $f(x)-g(y)$, it follows that $g(y)$ is constant, a contradiction. This completes the proof.
\end{proof}

\begin{lemma}\label{components}
Let  $F(x,y) \in\overline{\F_q}[x,y]$ be a nonconstant polynomial, and write   $F=\prod\limits_{i=1}^r F_i$ where each  $F_i$ is irreducible. If the $F_i$  are pairwise coprime,  then the following are equivalent.
\end{lemma}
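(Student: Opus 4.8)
The statement is Lemma \ref{components}, which asserts an equivalence among several conditions for a polynomial $F = \prod_{i=1}^r F_i \in \overline{\F_q}[x,y]$ factored into pairwise coprime irreducibles. The excerpt cuts off before the list of equivalent conditions, but given the context—this is the technical heart of Section \ref{secao3}, preparing the proof of Theorem \ref{main0}—the conditions almost certainly relate the global Frobenius-nonclassicality of $F$ (the divisibility in \eqref{frob}) to the componentwise Frobenius-nonclassicality of each $F_i$. So I expect the equivalence to say something like: (a) $F$ divides $(x^q-x)\tfrac{\partial F}{\partial x} + (y^q-y)\tfrac{\partial F}{\partial y}$; (b) each $F_i$ divides $(x^q-x)\tfrac{\partial F_i}{\partial x} + (y^q-y)\tfrac{\partial F_i}{\partial y}$; and possibly (c) some intrinsic condition on $f,g$ when $F = f(x)-g(y)$.

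**The plan.** The natural strategy is to compute the logarithmic derivative of the product. Writing $D := (x^q-x)\partial_x + (y^q-y)\partial_y$ for the Frobenius differential operator, the key observation is that $D$ is a derivation, so by the product rule
\begin{equation*}
D(F) = \sum_{i=1}^r \Bigl(\prod_{j\neq i} F_j\Bigr) D(F_i).
\end{equation*}
First I would establish the implication (b)$\Rightarrow$(a): if each $F_i \mid D(F_i)$, then each term $\bigl(\prod_{j\neq i}F_j\bigr)D(F_i)$ is divisible by $F = \prod_j F_j$, hence the sum is, giving $F \mid D(F)$. This direction is immediate. The real work is the converse (a)$\Rightarrow$(b). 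Suppose $F \mid D(F)$ and fix an index $i$. I would argue that since the $F_j$ are pairwise coprime and each is irreducible, $F_i \mid D(F)$ together with $F_i \mid \bigl(\prod_{k\neq i}F_k\bigr)D(F_k)$ for every $k \neq i$ (because $F_i$ divides the product $\prod_{k\neq i} F_k$ as a factor whenever... wait, one must be careful: $F_i$ divides $\prod_{j\neq k}F_j$ precisely when $i\neq k$, which holds for all the $k\neq i$ terms). Thus $F_i$ divides every summand except the $i$-th, and since $F_i \mid D(F)$, it must divide the remaining term $\bigl(\prod_{j\neq i}F_j\bigr)D(F_i)$. Coprimality of $F_i$ with each $F_j$ ($j\neq i$) then forces $F_i \mid D(F_i)$, as desired.

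**Anticipated obstacles.** The main subtlety is ensuring $D$ genuinely acts as a derivation and that irreducibility-plus-coprimality lets me cancel the factor $\prod_{j\neq i}F_j$ against $F_i$; this requires that $F_i$ be coprime to each $F_j$ for $j\neq i$, which is exactly the pairwise-coprime hypothesis, and that $\overline{\F_q}[x,y]$ is a UFD so that irreducible elements are prime. A second point to watch is whether any condition in the list invokes the separated-variables form $F = f(x)-g(y)$; if so I would invoke Lemma \ref{extra} to guarantee the components are automatically coprime (removing one hypothesis) and possibly Lemma \ref{lema1.9} (Fried–MacRae) to relate the intrinsic condition to the existence of a connecting polynomial $T$. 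The only genuinely delicate case is the characteristic-$p$ possibility that some $D(F_i)=0$ or that $F \in \overline{\F_q}[x^p,y^p]$, which would trivialize the divisibility; I would handle this by noting that Frobenius-nonclassicality in the sense of \eqref{frob} is stated for irreducible curves where the defining polynomial is not a $p$-th power, so $D(F_i)\neq 0$ can be assumed, and the cancellation argument goes through cleanly.
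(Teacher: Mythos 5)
Your proposal is correct and follows essentially the same route as the paper: the paper writes $F=F_iH_i$ with $H_i=\prod_{j\neq i}F_j$ and uses the two-term product rule $D(F)=D(F_i)H_i+D(H_i)F_i$ for the operator $D=(x^q-x)\partial_x+(y^q-y)\partial_y$, then concludes exactly as you do, cancelling $H_i$ against the prime $F_i$ via pairwise coprimality. Your expansion of $D(F)$ as the full sum over all indices is only a cosmetic variant of this, and your worry about $D(F_i)=0$ is unnecessary since the divisibility in condition (ii) holds trivially in that case.
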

\begin{enumerate}[\rm(i)]
\item $F$ divides $(x^q-x)\frac{\partial F}{\partial x}+(y^q-y)\frac{\partial F}{\partial y}$.
\item $F_i$ divides $(x^q-x)\frac{\partial F_i}{\partial x}+(y^q-y)\frac{\partial F_i}{\partial y}$ for all $i=1,\ldots,r$.
\end{enumerate}

\begin{proof}
For each $i \in \{1,\ldots,r\}$, define $H_i:=\prod \limits _{j\neq i}F_j$ and write  $F=F_iH_i$.  Computing $\frac{\partial (F_iH_i)}{\partial x} $ and $\frac{\partial (F_iH_i)}{\partial y}$ and multiplying the results by $x^q-x$ and 
$y^q-y$, respectively, yields the identity $$(x^q-x)\frac{\partial F}{\partial x}+(y^q-y)\frac{\partial F}{\partial y}=$$
\begin{equation} \label{frobunion1}
\Big((x^q-x)\frac{\partial F_i}{\partial x}+(y^q-y)\frac{\partial F_i}{\partial y} \Big)H_i +\Big((x^q-x)\frac{\partial H_i}{\partial x}+(y^q-y)\frac{\partial H_i}{\partial y}\Big)F_i.
\end{equation}
The equality (\ref{frobunion1}) clearly gives that (ii) implies (i). The converse also follows from   (\ref{frobunion1}), when we use the fact that $(F_i,F_j)=1$, for $i\neq j$, implies $(F_i,H_i)=1$.

\end{proof}

\begin{theorem}\label{main0}
Let $f(x),g(x) \in \F_q[x]$ be nonconstant polynomials  such that  $f(x)-g(y)\notin \F_q[x^p,y^p]$. Suppose that the irreducible components of $\mathcal{F}: f(x)=g(y)$ are defined over $\F_q$.  The   irreducible  components of $\mathcal{F}$ are $q$-Frobenius nonclassical if and only if there exist a monic polynomial $T\in \F_q[x]$ and a constant $\theta \in \F_q^*$, such that
\begin{equation}\label{FNC&T}
T(f(x))=\theta(x^q-x)f'(x) \hspace{0.1 cm}\text{ and }\hspace{0.1 cm} T(g(y))=\theta(y^q-y)g'(y).
\end{equation}
\end{theorem}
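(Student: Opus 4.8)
The plan is to reduce the two-sided statement to a single polynomial divisibility and then read off the polynomial $T$ from the Fried--MacRae criterion (Lemma~\ref{lema1.9}). Writing $F(x,y):=f(x)-g(y)$ and using $\partial F/\partial x=f'(x)$, $\partial F/\partial y=-g'(y)$, I would first record the identity
\[
(x^q-x)\frac{\partial F}{\partial x}+(y^q-y)\frac{\partial F}{\partial y}=(x^q-x)f'(x)-(y^q-y)g'(y)=:D(x,y),
\]
and set $a(x):=(x^q-x)f'(x)$, $b(y):=(y^q-y)g'(y)$, so that $D=a(x)-b(y)$. Since the coefficients of $F$ lie in $\F_q$ and lying in a ring of $p$-th powers is a condition on exponents only, the hypothesis $F\notin\F_q[x^p,y^p]$ is equivalent to $F\notin\overline{\F_q}[x^p,y^p]$, so Lemma~\ref{extra} applies and the irreducible factors of $F$ are pairwise coprime. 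As the irreducible components of $\mathcal F$ are defined over $\F_q$, the $\F_q$-factorization $F=\prod_{i=1}^r F_i$ is precisely this pairwise coprime factorization; hence by Lemma~\ref{components} the assertion ``each $F_i$ is $q$-Frobenius nonclassical'', i.e.\ \eqref{frob} for every $F_i$, is equivalent to the single divisibility $F\mid D$ in $\F_q[x,y]$.

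For the converse direction (the easy one), I would assume monic $T\in\F_q[x]$ and $\theta\in\F_q^*$ satisfy \eqref{FNC&T}. Subtracting the two given identities yields $T(f(x))-T(g(y))=\theta\,D(x,y)$; since $u-v$ divides $T(u)-T(v)$ for any polynomial $T$, specializing $u=f(x)$ and $v=g(y)$ shows that $F$ divides $\theta D$, hence divides $D$ (as $\theta\neq 0$). By the equivalence of the first paragraph, every irreducible component of $\mathcal F$ is then $q$-Frobenius nonclassical.

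For the forward direction I would begin from $F\mid D$ and try to invoke Fried--MacRae on $D=a(x)-b(y)$. The step requiring the most care, and the main obstacle, is that Lemma~\ref{lema1.9} demands $a$ and $b$ be \emph{nonconstant}, i.e.\ $f'\neq 0$ and $g'\neq 0$. I would dispose of the degenerate possibilities first: if, say, $g'=0$, then $D=a(x)$ has $y$-degree $0$ whereas $F$ has $y$-degree $\deg g\geq 1$, so $F\mid D$ forces $D=0$ and thus $f'=0$ as well; but then $f\in\F_q[x^p]$ and $g\in\F_q[y^p]$, contradicting $F\notin\F_q[x^p,y^p]$. Hence $a$ and $b$ are nonconstant, and Lemma~\ref{lema1.9} (applied over $\F_q$, where the divisibility already holds) provides $T_0\in\F_q[x]$ with $T_0(f(x))=a(x)$ and $T_0(g(y))=b(y)$. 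It remains to match the normalization of the statement: letting $\ell\in\F_q^*$ be the leading coefficient of the nonconstant polynomial $T_0$, I would put $T:=\ell^{-1}T_0$ (monic) and $\theta:=\ell^{-1}\in\F_q^*$. Because $(\lambda T_0)(h)=\lambda\,T_0(h)$ for every scalar $\lambda$ and every argument $h$, this gives $T(f(x))=\theta a(x)=\theta(x^q-x)f'(x)$ and $T(g(y))=\theta b(y)=\theta(y^q-y)g'(y)$, which is exactly \eqref{FNC&T}. Beyond this nondegeneracy argument, the only thing to watch is the bookkeeping ensuring $T$ and the divisibility live over $\F_q$ rather than merely over $\overline{\F_q}$.
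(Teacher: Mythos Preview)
Your argument is correct and follows the same route as the paper: reduce via Lemmas~\ref{extra} and~\ref{components} to the single divisibility $f(x)-g(y)\mid (x^q-x)f'(x)-(y^q-y)g'(y)$, then invoke Fried--MacRae (Lemma~\ref{lema1.9}) for the forward direction and the factorization $u-v\mid T(u)-T(v)$ for the converse. In fact you are more careful than the paper, which simply asserts that \eqref{FNC&T} ``follows directly'' from Lemma~\ref{lema1.9}; your explicit verification that $a,b$ are nonconstant and your normalization from $T_0$ to a monic $T$ with $\theta=\ell^{-1}$ fill in details the paper leaves implicit.
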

\begin{proof}

\noindent We begin by proving (\ref{FNC&T}). Set  $f(x)-g(y)=\prod\limits_{i=1}^r F_i$, where $F_i\in \F_{q}[x,y]$ are absolutely irreducible, and let us assume that each  curve $F_i=0$ is $q$-Frobenius nonclassical. That is, 
$$F_i  \text{ divides } (x^q-x)\frac{\partial F_i}{\partial x}+(y^q-y)\frac{\partial F_i}{\partial y}, \hspace{0.1cm}  \text { for all } \hspace{0.1cm}  i=1,\ldots,r.$$ From Lemma \ref{extra}  the $F_i$ are pairwise coprime, and so  Lemma \ref{components} implies that 
\begin{equation}\label{fgab}
f(x)-g(y) \hspace{0.1 cm}\text{ divides }\hspace{0.1 cm}(x^q-x)f'(x)-(y^q-y)g'(y).
\end{equation}
Now with the  conditions $f(x)-g(y)\notin \F_q[x^p,y^p]$  and (\ref{fgab}),  the statement  (\ref{FNC&T}) follows directly from Lemma \ref{lema1.9}. The converse follows easily from
the fact that $f(x)-g(y)|T(f(x))-T(g(y))$ together with Lemmas \ref{extra} and \ref{components}.

\end{proof}

\begin{corollary}\label{main}
Let $f(x),g(x) \in \F_q[x]$ be nonconstant polynomials  such that  $f(x)-g(y)\notin \F_q[x^p,y^p]$. Suppose that the irreducible components of $\mathcal{F}: f(x)=g(y)$ are defined over $\F_q$.  If  the irreducible component of $\mathcal{F}$  are $q$-Frobenius nonclassical, then $f(y)$ and $g(x)$ are  MVSPs  with $V_f=V_g$. Conversely, suppose that  $f(x)$ and $g(y)$ are  are  MVSPs with $V_f=V_g $.  If $|V_f|>2$ or  $|V_f|=2=p$, then the irreducible  components of $\mathcal{F}$ are $q$-Frobenius nonclassical.
\end{corollary}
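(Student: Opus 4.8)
The plan is to read this corollary as the translation of Theorem \ref{main0} through the dictionary supplied by Theorem \ref{Mills}: Theorem \ref{main0} characterizes Frobenius nonclassicality of the components of $\mathcal{F}$ by the existence of a single monic $T\in\F_q[x]$ and a single $\theta\in\F_q^*$ satisfying the \emph{pair} of functional equations (\ref{FNC&T}), while Theorem \ref{Mills} is exactly the statement that one such equation is equivalent to the relevant polynomial being an MVSP with $T=\prod_{\gamma\in V_F}(x-\gamma)$. Both directions will be obtained by feeding the equations of one theorem into the other; the only genuine difficulty is that Theorem \ref{main0} demands the \emph{same} constant $\theta$ in both equations, whereas Theorem \ref{Mills}, applied separately to $f$ and to $g$, produces these constants one at a time.

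For the forward implication, suppose the components of $\mathcal{F}$ are $q$-Frobenius nonclassical. By Theorem \ref{main0} there are a monic $T\in\F_q[x]$ and $\theta\in\F_q^*$ with $T(f(x))=\theta(x^q-x)f'(x)$ and $T(g(y))=\theta(y^q-y)g'(y)$. Applying the first (``if'') assertion of Theorem \ref{Mills} to $f$ and to $g$ in turn shows that both are MVSPs and that $T=\prod_{\gamma\in V_f}(x-\gamma)=\prod_{\gamma\in V_g}(x-\gamma)$. Since the root set of each product is precisely the corresponding value set, equality of the two products gives $V_f=V_g$, as required.

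For the converse, assume $f$ and $g$ are MVSPs with $V_f=V_g=:V$ and set $T:=\prod_{\gamma\in V}(x-\gamma)$, a monic polynomial in $\F_q[x]$ because $V\subseteq\F_q$. Under the hypothesis $|V|>2$ or $|V|=2=p$, the converse part of Theorem \ref{Mills} applies to each polynomial and yields constants $\theta_f,\theta_g\in\F_q^*$ with $T(f(x))=\theta_f(x^q-x)f'(x)$ and $T(g(y))=\theta_g(y^q-y)g'(y)$ for the \emph{same} $T$. If I can show $\theta_f=\theta_g$, then setting $\theta:=\theta_f=\theta_g$ produces exactly the pair (\ref{FNC&T}), and Theorem \ref{main0} delivers the conclusion (the standing hypotheses $f(x)-g(y)\notin\F_q[x^p,y^p]$ and the rationality of the components being inherited unchanged).

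The main obstacle is therefore the equality $\theta_f=\theta_g$, i.e. showing that the constant produced by Theorem \ref{Mills} depends only on the common value set $V$ and not on the individual polynomial. When $|V|>2$ I plan to pin $\theta$ down intrinsically: Lemma \ref{lema00}(ii) together with Lemma \ref{lema0}(i) forces $T'(\gamma)=-\theta_f$ at every $\gamma\in V$ except possibly the single value minimizing $\deg\gcd(f-\gamma,x^q-x)$, and likewise $T'(\gamma)=-\theta_g$ at every $\gamma\in V$ save at most one. Since $|V|\ge 3$, these two ``all-but-one'' subsets of $V$ must overlap, and at a common point $\gamma$ one reads off $-\theta_f=T'(\gamma)=-\theta_g$. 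The borderline case $|V|=2=p$ is not covered by Lemma \ref{lema0} (which requires $r>1$), so I would treat it directly: using Remark \ref{troca} to normalize to $V=\{0,1\}$, the relation $\widetilde{F}^{\,2}-\widetilde{F}=(x^q-x)\widetilde{F}'$ from \cite[Lemma 4.1]{borges_con_charac_mvsp} (already invoked in the proof of Theorem \ref{Mills}) applies to the normalizations of both $f$ and $g$, and translating back through the affine change $F\mapsto\frac{1}{a-b}(F-b)$ shows $\theta_f=\theta_g=a-b$ for $V=\{a,b\}$, the value $a-b$ being symmetric in characteristic $2$. With $\theta_f=\theta_g$ established in both cases, the proof concludes via Theorem \ref{main0} as indicated above.
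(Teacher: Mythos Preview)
Your proof is correct and follows the paper's approach, which records in a single line that the corollary is an immediate consequence of Theorems \ref{Mills} and \ref{main0}. You are in fact more careful than the paper: you isolate and resolve the point that the converse of Theorem \ref{main0} requires a \emph{single} $\theta$ serving both $f$ and $g$, and you supply the argument (via Lemma \ref{lema0} when $|V|>2$, and via the normalization of Remark \ref{troca} when $|V|=2=p$) that $\theta$ is determined by the common value set alone---a detail the paper's one-line proof leaves implicit.
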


\begin{proof}
This is an immediate consequence of Theorems  \ref{Mills} and \ref{main0}.
\end{proof}

\begin{remark} The converse of Corollary \ref{main} in the cases $|V_f|=1$ and 
$|V_f|=2<p$  will be detailed addressed in Section \ref{secao4}.
\end{remark}


\subsection{Some Consequences}

Next, we point out some facts that follow immediately  from Corollary \ref{main}.

\begin{corollary} If   $H=\{a_ix-b_i| i=1,\ldots,n\}$ is a subgroup of $Aut(\F_q(x))$  and $f(x)=\prod\limits_{i=1}^n(a_ix-b_i)$, then $f(x)$ is an MVSP. 
\end{corollary}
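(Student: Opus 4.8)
The plan is to take $g=f$ and recognize the curve $\mathcal{F}\colon f(x)=f(y)$ as a union of lines over $\fq$, each of which is Frobenius nonclassical, and then to apply the forward direction of Corollary~\ref{main}.

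First I would show that $f$ is $H$-invariant. Writing the elements of $H$ as affine maps $\sigma(x)=a_\sigma x-b_\sigma$, for any fixed $\tau\in H$ one has
\[
f(\tau(x))=\prod_{\sigma\in H}\sigma(\tau(x))=\prod_{\sigma\in H}(\sigma\circ\tau)(x)=\prod_{\rho\in H}\rho(x)=f(x),
\]
because right translation by $\tau$ permutes $H$. Viewing $f(x)-f(y)$ as a degree-$n$ polynomial in $x$ over $\overline{\fq}(y)$ with leading coefficient $c=\prod_\sigma a_\sigma$, this invariance shows that each $\sigma(y)$ ($\sigma\in H$) is a root; since distinct $\sigma$ give distinct linear functions $\sigma(y)$, these are $n$ distinct roots, so
\[
f(x)-f(y)=c\prod_{\sigma\in H}\bigl(x-\sigma(y)\bigr).
\]
Thus $\mathcal{F}$ is a union of $n$ distinct lines, all defined over $\fq$ since $a_\sigma,b_\sigma\in\fq$. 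Being squarefree, $f(x)-f(y)$ is not a $p$-th power over $\overline{\fq}$, hence $f(x)-f(y)\notin\fq[x^p,y^p]$, which is exactly the hypothesis required by Corollary~\ref{main}.

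Next I would verify that each component $F_\sigma:=x-a_\sigma y+b_\sigma$ is Frobenius nonclassical, i.e. that $F_\sigma$ divides $(x^q-x)\tfrac{\partial F_\sigma}{\partial x}+(y^q-y)\tfrac{\partial F_\sigma}{\partial y}=(x^q-x)-a_\sigma(y^q-y)$. Substituting $x=a_\sigma y-b_\sigma$ and using $a_\sigma^q=a_\sigma$, $b_\sigma^q=b_\sigma$ gives $x^q-x=a_\sigma(y^q-y)$, so the expression vanishes modulo $F_\sigma$. With every irreducible component Frobenius nonclassical, each defined over $\fq$, and $f(x)-f(y)\notin\fq[x^p,y^p]$, the forward implication of Corollary~\ref{main} (applied with $g=f$) yields that $f$ is an MVSP.

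The only step demanding genuine care is the factorization into lines, which hinges entirely on the group structure of $H$ (the coset identity $H\tau=H$); the Frobenius-nonclassicality of a single line and the squarefreeness are then immediate. As an alternative route avoiding Corollary~\ref{main}, one could instead compute $|V_f|$ directly: by invariance $f$ is constant on the $H$-orbits of $\fq$, and the same factorization shows $f(\alpha)=f(\beta)$ iff $\beta\in H\cdot\alpha$, so $f$ separates orbits and $|V_f|$ equals the number of $H$-orbits on $\fq$. A Burnside count, together with the observation that the translation subgroup $H\cap\{x\mapsto x+b\}$ is an $\F_{p^k}$-vector space, then shows this number to be $\lfloor(q-1)/n\rfloor+1$, giving the MVSP property from the definition.
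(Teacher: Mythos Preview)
Your argument is correct and follows essentially the same route as the paper's proof: use the group law to factor $f(x)-f(y)$ as a product of $n$ distinct $\F_q$-lines, observe that such lines are $q$-Frobenius nonclassical, and invoke Corollary~\ref{main}. You are in fact more careful than the paper in explicitly verifying the hypothesis $f(x)-f(y)\notin\F_q[x^p,y^p]$ (via squarefreeness) and the Frobenius nonclassicality of a line; the orbit-counting alternative you sketch at the end is a genuinely different, self-contained route that bypasses Corollary~\ref{main} entirely.
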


\begin{proof}
Since $(H,\circ)$ is a group, the polynomial of degree $n$
$$F(x,y)=f(x)-f(y)=\prod\limits_{i=1}^{n} (a_ix-b_i)-\prod\limits_{i=1}^{n} (a_iy-b_i)$$
is such that $F(x,a_ix+b_i)=0$ for all $i=1,\ldots,n$. Therefore, the plane curve $F(x,y)=0$ is the union of $n$ distinct lines
$y=a_ix+b_i$. In particular, $f^{\prime}(x) \neq 0$ and  the $n$ irreducible components of $F(x,y)=0$ are Frobenius nonclassical curves. Therefore Corollary \ref{main}  gives the result.
 \end{proof}

Hefez and Voloch \cite [Proposition 6]{HV}  have  proved that if a $q$-Frobenius nonclassical curve of degree $d>1$ is nonsingular, then $d\geq \sqrt{q}+1$.
Next, we show that the nonsingularity condition can  be dropped for the curves considered here.

\begin{corollary}\label{maincor2} Any nonlinear $q$-Frobenius nonclassical curve $\mathcal{F}:f(y)=g(x)$ has degree $d\geq \sqrt{q}+1$.
\end{corollary}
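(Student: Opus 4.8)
The plan is to combine Corollary~\ref{main} with Lemma~\ref{lema-linear}. Since, by definition, a Frobenius nonclassical curve is irreducible, $\mathcal{F}\colon f(y)=g(x)$ is an irreducible, nonlinear curve, and its degree is $d=\max(\deg f,\deg g)$ (the total degree of $f(y)-g(x)$). The forward direction of Corollary~\ref{main}, which carries no restriction on $|V_f|$, then applies and tells us that $f$ and $g$ are MVSPs with $V_f=V_g$. So I would open the proof by recording exactly these two consequences.

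The core of the argument is a short contradiction obtained by supposing $d\le\sqrt q$. In that case $\deg f\le\sqrt q$ and $\deg g\le\sqrt q$, so Lemma~\ref{lema-linear} forces $g(x)=f(ax+b)$ for some $a,b\in\F_q$. Substituting, the defining polynomial becomes
$$f(y)-g(x)=f(y)-f(ax+b),$$
which is divisible by $y-ax-b\in\F_q[x,y]$. But $\mathcal{F}$ is irreducible, so $f(y)-g(x)$ must coincide, up to a constant, with $y-ax-b$; that is, $\mathcal{F}$ is the line $y=ax+b$, contradicting nonlinearity. Hence $d>\sqrt q$.

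I expect the passage from $d>\sqrt q$ to the stated bound $d\ge\sqrt q+1$ to be the delicate point, and hence the main obstacle. When $q$ is a perfect square, integrality of $d$ yields $d\ge\sqrt q+1$ immediately (and the Hermitian/Fermat curve of degree $\sqrt q+1$ shows the bound is sharp there). When $q$ is not a perfect square, the only integer in the interval $(\sqrt q,\sqrt q+1)$ is $d=\lfloor\sqrt q\rfloor+1$, and this single boundary value must be excluded separately. To do so I would invoke the arithmetic restrictions that Mills' Theorem~\ref{main1} places on the degree of an MVSP, namely $v\mid p^{k}-1$ and $1+vr=p^{mk}$, together with the coincidence $V_f=V_g$ (so that $\deg f$ and $\deg g$ produce the same value-set size), to show that no MVSP of that exact degree and value-set cardinality can occur. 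Everything else is the brief factorization step above; it is this structural exclusion of the boundary degree that I anticipate requiring the most care.
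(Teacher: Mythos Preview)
Your core argument---apply the forward direction of Corollary~\ref{main} to conclude that $f$ and $g$ are MVSPs with $V_f=V_g$, assume $d\le\sqrt q$, invoke Lemma~\ref{lema-linear} to get $g(x)=f(ax+b)$, and observe that $y-ax-b$ is then a factor of $f(y)-g(x)$, contradicting irreducibility and nonlinearity---is exactly the paper's proof, essentially line for line.

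Where you diverge is in worrying about the step from $d>\sqrt q$ to $d\ge\sqrt q+1$ when $q$ is not a perfect square. The paper does \emph{not} address this: it simply derives the contradiction from $d\le\sqrt q$ and stops, so strictly speaking its argument also yields only $d>\sqrt q$. Your caution here is well placed, but the Mills-based exclusion of the boundary degree that you sketch is not something the paper carries out, and as written it is only a plan rather than an argument. For the purpose of matching the paper, your contradiction step already reproduces its proof in full; the extra paragraph is additional scrupulousness beyond what the paper provides.
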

 
\begin{proof}

From Corollary \ref{main},  $f$ and $g$ are MVSPs with $V_f=V_g$. Now suppose $\deg \mathcal{F} \leq \sqrt{q}$, i.e., $\deg f, \deg g \leq \sqrt{q}$. Thus Lemma \ref{lema-linear} implies  $g(x)=f(ax+b)$ for some $a,b \in \F_q$. Hence the line $y=ax+b$ is a component of the curve $\mathcal{F}$, which contradicts its irreducibility. This finishes the proof.

\end{proof}

Recall from Remark \ref{W-remark} that $\mathcal{W}$ denotes the set of MVSPs in $\F_{q^k}[x],$ whose  value set is $\F_q$. From the characterization given by Corollary \ref{main}, the set $\mathcal{W}$  turns into a productive source of new Frobenius nonclassical curves. In other words, we have the following.

\begin{corollary}\label{maincor3} Let $f,g$ be polynomials in $\mathcal{W}$. If the irreducible components of $\mathcal{F}:f(y)=g(x)$ are defined over $\F_{q^k}$, then all such components are $q^k$-Frobenius nonclassical curves.
\end{corollary}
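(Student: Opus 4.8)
The plan is to reduce Corollary \ref{maincor3} directly to Corollary \ref{main}. The key observation is that both hypotheses needed by the converse direction of Corollary \ref{main} are already packaged into the definition of $\mathcal{W}$. Specifically, by definition $f,g \in \mathcal{W}$ means that $f$ and $g$ are MVSPs in $\F_{q^k}[x]$ with $V_f = V_g = \F_q$; in particular $V_f = V_g$ holds automatically, so I do not need to arrange this separately. The only remaining thing to verify before invoking Corollary \ref{main} is that $|V_f| > 2$ or $|V_f| = 2 = p$ (to fall inside the hypotheses of the converse), and that the coprimality/non-$p$-th-power condition $f(y) - g(x) \notin \F_{q^k}[x^p, y^p]$ is satisfied so that Corollary \ref{main} applies over the field $\F_{q^k}$.

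First I would handle the value-set condition. Since $V_f = V_g = \F_q$, we have $|V_f| = q$. If $q > 2$, then $|V_f| = q > 2$ and we are immediately in the good case. If $q = 2$, then $|V_f| = 2 = p$ (since $q=2$ forces $p=2$), which is precisely the exceptional case allowed in the converse of Corollary \ref{main}. Thus in all cases the hypothesis $|V_f| > 2$ or $|V_f| = 2 = p$ is met, so this step presents no obstacle. Next I would verify the separability-type hypothesis $f(y)-g(x) \notin \F_{q^k}[x^p,y^p]$. By Theorem \ref{MVSP-set}, any MVSP $F \in \mathcal{W}$ has the form $T_k(H) \bmod (x^{q^k}-x)$ with $H$ a sum of monomials $cx^{\alpha_0 + \alpha_1 q^2 + \cdots + \alpha_{k-1}q^{k-1}}$; such an $F$ has $F' \neq 0$ by Lemma \ref{lema00}(iii), so $f, g \notin \F_{q^k}[x^p]$ and hence $f(y)-g(x) \notin \F_{q^k}[x^p, y^p]$.

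Once these two conditions are checked, the conclusion is immediate: with $\F_q$ in Corollary \ref{main} replaced by $\F_{q^k}$, and with $f, g$ being MVSPs over $\F_{q^k}$ satisfying $V_f = V_g$ together with the value-set size condition above, the converse direction of Corollary \ref{main} guarantees that every irreducible component of $\mathcal{F}: f(y) = g(x)$, provided these components are defined over $\F_{q^k}$ (which is exactly the standing hypothesis of the statement), is $q^k$-Frobenius nonclassical. I would then simply cite Corollary \ref{main} to finish.

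The step I expect to require the most care is confirming that Corollary \ref{main} is genuinely being applied with the \emph{same} finite field in all three roles, namely $\F_{q^k}$: the polynomials lie in $\F_{q^k}[x]$, the notion of MVSP in the definition of $\mathcal{W}$ is with respect to $\F_{q^k}$, and the Frobenius nonclassicality claimed is the $q^k$-Frobenius version. Since the MVSP condition for elements of $\mathcal{W}$ is defined relative to $\F_{q^k}$ (they have value set $\F_q$ but are viewed as polynomials over $\F_{q^k}$ with $|V_f| = |\F_q| = q$ relative to the ground field $\F_{q^k}$), one must be attentive that the ``value set'' computed over $\F_{q^k}$, i.e. $\{f(\alpha): \alpha \in \F_{q^k}\}$, is indeed $\F_q$ and that the MVSP lower bound is attained over $\F_{q^k}$. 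This is guaranteed by Theorem \ref{MVSP-set} and the bookkeeping in Remark \ref{W-remark}, but keeping the roles of $q$ and $q^k$ straight is the one place where a careless reading could go wrong.
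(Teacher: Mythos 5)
Your proof is correct and takes essentially the same route as the paper: the paper presents Corollary \ref{maincor3} as an immediate consequence of Corollary \ref{main} applied with $\F_{q^k}$ as the ground field, which is exactly your reduction. The only difference is that you explicitly check the hypotheses the paper leaves implicit, namely that $|V_f|=q$ falls into the case $|V_f|>2$ or $|V_f|=2=p$, and that $f(y)-g(x)\notin\F_{q^k}[x^p,y^p]$ (via Theorem \ref{Mills} and Lemma \ref{lema00}(iii) giving $f'\neq 0$), both of which are verified correctly.
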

  
Taking into account Corollary \ref{maincor3}, it follows that all irreducible curves of type $y^{\frac{q^k-1}{q-1}}=f(x)$, where $f(x) \in \mathcal{W} $, are $q^k$-Frobenius nonclassical. In particular, the so-called Norm-Trace curve
\begin{equation}\label{Norma-Traco}
y^{q^{k-1}+\cdots+q+1}=x^{q^{k-1}}+\cdots+x^q+x
\end{equation}
is $q^k$-Frobenius nonclassical. The Frobenius nonclassicality of cyclic coverings of $\mathbb{P}^1$ (e.g. curve (\ref{Norma-Traco}))
will be  the focus of Section \ref{secao5}.


\section{Frobenius nonclassicality in the cases $|V_f|\leq 2$ }\label{secao4}

Recall from Theorem \ref{Mills} that all nonconstant polynomials $f\in \F_{q}[x]$ satisfying equation (\ref{mvsp})   are MVSPs.
The converse  also holds for MVSPs for which either $|V_f|>2$ or $|V_f|=2=p$.  However, if $|V_f|=1$ or $|V_f|=2<p$,
one can easily find examples for which equation  (\ref{mvsp}) fails, i.e.,
\begin{equation}\label{failure}
T(f)=(x^q-x)h(x), \text{  but  } h(x)\neq \theta\cdot f'(x) \text{  for all   } \theta \in \F_q^{*}.
\end{equation}
              
 It is easy to check that the polynomials $f(x)=(x^q-x)x$ and $g(x)=\frac{x^q-x}{x^p-x}+x^{q-1}-1$ (for $p>2$) are examples of such a failure. As remarked by Carlitz et al. \cite{carlitz_pol_minimal_61},    the  MVSPs $f$ with  $|V_f|\leq 2$ do not fit the general pattern. That is the reason  why they were left out in  Theorems \ref{Mills}.
 
 The objective of this section is to characterize the MVSPs $f\in \F_q[x]$, where   $|V_f|\leq 2$, for which the polynomial $h(x)$ in (\ref{failure}) is indeed $\theta\cdot f'(x)$ for some $\theta \in \F_q^{*}.$ The results here will complement the converse of Corollary \ref{main}. That is, we will  complete the  characterization of our  Frobenius nonclassical curves in terms of MVSPs. 
 

\subsection{Case $|V_f|=1$}

It is straightforward to see that a nonconstant  polynomial $f\in \F_q[x]$ satisfies $|V_f|=1$ if and only if
\begin{equation}\label{so1}
 f=(x^q-x)r(x)+\alpha,
\end{equation}         
where     $r\in \F_q[x]\backslash\{0\}$   and     $\alpha \in \F_q$. Out of these MVSPs, the ones that give rise to Frobenius nonclassical curves will be characterized in the next theorem. Note that from (\ref{so1}), to study the curves $f(x)=g(y)$ where $V_f=V_g$, we may assume $\alpha=0$.

\begin{theorem}\label{Vf2} 
Let $f(x)$ and $g(y)$ be nonconstant polynomials defined over $\F_q$ such that  $f(x)-g(y)\notin \F_q[x^p,y^p]$. Suppose that  the irreducible components of the curve $\mathcal{F}: f(x)=g(y)$  are defined over $\F_q$ and that  $f(x)$ and $g(y)$ are  MVSPs with $V_f=V_g =\{0\}$.  Then the irreducible  components of $\mathcal{F}$ are $q$-Frobenius nonclassical if and only if there exist positive integers $n,m$,  where $n\equiv m \mod p$, and polynomials $a(t), b(t) \in \F_q[t]$,  not divisible by $t^q-t$, such that 
\begin{equation}\label{osdois}
f(x)=(x^q-x)^{n}a(x)^p \text{  and  }  g(y)=(y^q-y)^{m}b(y)^p.
\end{equation}

\end{theorem}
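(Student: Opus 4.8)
The plan is to reduce the whole statement to a single functional equation via the results already established, and then to read off the multiplicative structure of $f$ and $g$ from that equation. Since $V_f=V_g=\{0\}$, Theorem \ref{main0} tells us that the components of $\mathcal{F}$ are Frobenius nonclassical exactly when there are a monic $T\in\F_q[x]$ and a $\theta\in\F_q^*$ with $T(f)=\theta(x^q-x)f'$ and $T(g)=\theta(y^q-y)g'$. Applying the forward implication of Theorem \ref{Mills} to $f$ forces $T=\prod_{\gamma\in V_f}(x-\gamma)=x$. Thus the task becomes purely local: prove that the pair of equations $f=\theta(x^q-x)f'$ and $g=\theta(y^q-y)g'$, for one common $\theta\in\F_q^*$, is equivalent to the asserted form $f=(x^q-x)^n a^p$, $g=(y^q-y)^m b^p$ with $n\equiv m\pmod p$.

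The core computation is the analysis of a single equation $f=\theta(x^q-x)f'$. First I would let $n\ge 1$ be the largest exponent with $(x^q-x)^n\mid f$ (this is $\ge 1$ because $x^q-x\mid f$, as $V_f=\{0\}$), and write $f=(x^q-x)^n u$ with $x^q-x\nmid u$. Using $(x^q-x)'=-1$, differentiating and cancelling $(x^q-x)^n$ turns the equation into $(1+\theta n)u=\theta(x^q-x)u'$. The decisive step is to show $u'=0$: if $u'\neq 0$ then $x^q-x$ divides the right-hand side and hence $(1+\theta n)u$, which, since $x^q-x\nmid u$, forces $1+\theta n=0$; but then the left-hand side vanishes and $\theta(x^q-x)u'=0$ gives $u'=0$, a contradiction. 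With $u'=0$ I get $1+\theta n=0$ (so $p\nmid n$ and $\theta=-n^{-1}$) and, since $\F_q$ is perfect, $u=a^p$ for some $a\in\F_q[x]$; the squarefreeness of $x^q-x$ then upgrades $x^q-x\nmid u$ to $x^q-x\nmid a$. This yields $f=(x^q-x)^n a^p$, and symmetrically $g=(y^q-y)^m b^p$ with $m\ge 1$ and $1+\theta m=0$.

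To finish the ``only if'' direction I would simply compare the two relations $1+\theta n=0=1+\theta m$: since $\theta\neq 0$, they give $n=m$ in $\F_q$, i.e. $n\equiv m\pmod p$. For the converse I would start from the asserted form and first use the standing hypothesis to exclude $p\mid n$: if $p\mid n$ then $p\mid m$ (as $n\equiv m\pmod p$), both $f$ and $g$ become $p$-th powers, say $f=A(x)^p$ and $g=B(y)^p$, and then $f(x)-g(y)=(A(x)-B(y))^p\in\F_q[x^p,y^p]$, contrary to assumption. Hence $p\nmid n$, and I may set $\theta:=-n^{-1}$ (which equals $-m^{-1}$ since $n\equiv m\pmod p$) together with $T=x$; a direct differentiation using $(a^p)'=0$ confirms $f=\theta(x^q-x)f'$ and $g=\theta(y^q-y)g'$, whereupon Theorem \ref{main0} delivers the Frobenius nonclassicality.

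The main obstacle I anticipate is the step $u'=0$ (equivalently, that $f/(x^q-x)^n$ is a $p$-th power): this is where the interaction between the separable part of $f$ and the factor $x^q-x$ must be controlled, and it should be argued directly from the functional equation via the divisibility observation above rather than through multiplicity bookkeeping. A secondary, easy-to-overlook point is the exact role of the hypothesis $f(x)-g(y)\notin\F_q[x^p,y^p]$: it is needed only in the converse, and only to rule out $p\mid n$, i.e. to prevent $f$ and $g$ from both being $p$-th powers; the congruence $n\equiv m\pmod p$ itself is forced entirely by the single shared constant $\theta$.
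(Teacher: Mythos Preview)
Your proof is correct and follows essentially the same approach as the paper: both use Theorem~\ref{main0} (together with Theorem~\ref{Mills} to pin down $T=x$) to reduce to the single equation $f=\theta(x^q-x)f'$, then extract the maximal power of $x^q-x$ from $f$ and argue that the cofactor has vanishing derivative, yielding $\theta=-1/n$ and the $p$-th power form. Your converse is in fact slightly more careful than the paper's, which writes $\theta=-1/n=-1/m$ without explicitly excluding $p\mid n$; your observation that $p\mid n$ would force $f(x)-g(y)\in\F_q[x^p,y^p]$ is exactly the missing justification.
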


\begin{proof}
It is clear that any  nonconstant polynomial $h \in \F_q[t]$  for which  $V_h=\{0\}$ can be written as  $h(t)=(t^q-t)^nu(t)$, where $n$ is a positive integer, and  $u\in \F_q[t]$ is such that  $(t^q-t) \nmid u(t)$.  Thus writing $f$ and $g$ in this way, we have 
$$f(x)=(x^q-x)^{n}u(x) \text{  and  }  g(y)=(y^q-y)^{m}v(y).$$
Now if the irreducible  components of $\mathcal{F}$ are $q$-Frobenius nonclassical, then Theorem \ref{main0} implies 
\begin{equation}\label{casox}
(x^q-x)^{n}u(x) =\theta (x^q-x)^n(-nu(x)+(x^q-x)u'(x))
\end{equation}
and 
\begin{equation}\label{casoy}
(y^q-y)^{m}v(y) =\theta (y^q-y)^m(-mv(y)+(y^q-y)v'(y))
\end{equation}
for some $\theta \in \F_{q}^*$.
Since (\ref{casox}) is equivalent to  $(1+\theta n)u(x) =\theta(x^q-x)u'(x)$
and  furthermore $(x^q-x)\nmid u(x)$, we have $u'(x)=0$ and $1+\theta n=0$. That is, $u(x)=a(x)^p$ for some $a(x) \in \F_q[x]$ and $\theta=-1/n$. Similarly, (\ref{casoy})  yields $v(y)=b(y)^p$ for some $b(y) \in \F_q[y]$ and $\theta=-1/m$, and then  $(\ref{osdois})$ follows.
 Conversely, note that $(\ref{osdois})$ implies that $f$ and $g$ satisfy equation  
$T(h(x))=\theta (x^q-x)h'(x)$ for $T(x)=x$ and  $\theta=-1/m=-1/n$. Thus  Theorem
\ref{main0} gives the result.

\end{proof}


\subsection{Case $|V_f|=2<p$}

Note that  MVSPs $f\in \F_q[x]$  with $|V_f|=2$ correspond to polynomials of degree $\leq q-1$ with value set of size two. Using  Lagrange interpolation, it follows that the polynomials $f\in \F_q[x]$, for which $V_{f}=\{\alpha,\beta\}$, are given by
$$f(x)=\alpha \sum\limits_{a\in S}\Big(1-(x-a)^{q-1}\Big)+\beta\sum\limits_{b\in \F_{q}\backslash S}\Big(1-(x-b)^{q-1}\Big),$$ where $S\subsetneq  \F_q$ is an arbitrary nonempty set (see e.g. \cite[p. 348]{Harald}). Note if   $S\subsetneq  \F_q$  is fixed, then by Remark \ref{troca} we may assume  $V_{f}=\{0 ,1\}$, and then write 

\begin{equation}\label{lagrange}
f(x)=\sum\limits_{a\in S}\Big(1-(x-a)^{q-1}\Big).
 \end{equation}
 
  We begin by providing  an alternative description for the polynomials (\ref{lagrange}).
  
\begin{lemma}\label{lemasplit}
 Let  $S \subsetneq \F_q$ be a nonempty set.  If $g(x)=\prod\limits_{a\in S}(x-a)$ and $h(x)=\prod\limits_{b\in \F_{q}\backslash S}(x-b)$, then  $f=-g'h$ is an MVSP with $V_f=\{0,1\}$. Moreover, all MVSPs $f\in \F_q[x]$, with $V_f=\{0,1\}$,  arise in  this way.
 \end{lemma}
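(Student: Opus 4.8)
The plan is to base the whole argument on the single identity
$g(x)h(x)=\prod_{c\in\F_q}(x-c)=x^q-x$,
which holds because $S$ and $\F_q\setminus S$ partition $\F_q$ and every factor has the form $(x-c)$ with $c\in\F_q$. Differentiating this and using that $(x^q-x)'=-1$ in characteristic $p$ (since $qx^{q-1}=0$), I obtain the key relation $g'h+gh'=-1$, equivalently $-g'h=gh'+1$. This one equation drives everything that follows.

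For the forward implication I would first read off the value set of $f=-g'h$. At a root $b\in\F_q\setminus S$ of $h$ one has $f(b)=-g'(b)h(b)=0$, while at a root $a\in S$ of $g$ the relation gives $f(a)=g(a)h'(a)+1=1$. Since $S\neq\emptyset$ by hypothesis and $\F_q\setminus S\neq\emptyset$ because $S\subsetneq\F_q$, both values are attained, so $V_f=\{0,1\}$ and $|V_f|=2$ (in particular $f$ is nonconstant; note also $g'\neq0$, as $g$ is separable of positive degree). To upgrade this to the MVSP property I only need a degree bound: $\deg f=\deg(g'h)\le(\deg g-1)+\deg h=(|S|-1)+(q-|S|)=q-1$. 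Then $(q-1)/\deg f\ge1$ forces $\lfloor(q-1)/\deg f\rfloor\ge1$, while the lower bound in $(\ref{mvsp-def})$ together with $|V_f|=2$ forces $\lfloor(q-1)/\deg f\rfloor\le1$; hence equality holds and $f$ attains the lower bound, i.e. $f$ is an MVSP.

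For the converse, given an MVSP $f$ with $V_f=\{0,1\}$, I would set $S:=\{a\in\F_q:f(a)=1\}$, a nonempty proper subset of $\F_q$ since both values occur, and build $g,h$ from it. By the forward direction the polynomial $-g'h$ takes value $1$ exactly on $S$ and $0$ on $\F_q\setminus S$, so $f$ and $-g'h$ agree at every point of $\F_q$. The finishing step promotes this pointwise agreement to equality of polynomials: both have degree at most $q-1$ (for $f$ this follows from the MVSP relation $|V_f|=\lfloor(q-1)/\deg f\rfloor+1=2$, which yields $\deg f\le q-1$), so their difference is a polynomial of degree at most $q-1$ vanishing at all $q$ elements of $\F_q$, hence identically zero. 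Therefore $f=-g'h$, as claimed.

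The computations are routine; the only place demanding care is the degree bookkeeping. One must notice that when $p\mid|S|$ the derivative $g'$ drops below degree $|S|-1$, so $\deg f$ can be strictly smaller than $q-1$; the argument is arranged to use only the inequality $\deg f\le q-1$, which survives this degeneracy, leaving the MVSP conclusion intact. The one genuinely structural point, and the step I expect to be the main obstacle, is the passage from \emph{equal on $\F_q$} to \emph{equal as polynomials} in the converse, where the a priori bound $\deg f\le q-1$ coming from the MVSP hypothesis is indispensable.
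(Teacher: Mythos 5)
Your proof is correct. The forward direction coincides with the paper's: both start from $g(x)h(x)=x^q-x$, differentiate to obtain $g'h+gh'=-1$, and deduce $V_f=\{0,1\}$ together with $\deg f\le q-1$, whence $f$ is an MVSP. The converse is where you genuinely diverge. The paper argues by counting: it checks that distinct subsets $S$ yield distinct polynomials $-g'h$, so the construction produces exactly $2^q-2$ MVSPs with value set $\{0,1\}$; since the Lagrange-interpolation parametrization (\ref{lagrange}) shows there are exactly $2^q-2$ such MVSPs in total, surjectivity follows by pigeonhole. You instead exhibit the inverse map explicitly: given $f$, set $S=\{a\in\F_q : f(a)=1\}$, observe that $f$ and $-g'h$ agree at every point of $\F_q$, and promote pointwise agreement to a polynomial identity using the bound $\deg f\le q-1$ (correctly extracted from the MVSP hypothesis $|V_f|=\lfloor (q-1)/\deg f\rfloor+1=2$) together with $\deg(-g'h)\le q-1$. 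Both routes rest on the same principle --- a polynomial of degree less than $q$ is determined by its values on $\F_q$ --- but yours is constructive and self-contained: it pinpoints which $S$ corresponds to a given $f$, and it needs neither the injectivity check nor the enumeration via (\ref{lagrange}), whereas the paper's counting argument is shorter given that the parametrization (\ref{lagrange}) had already been set up in the surrounding discussion. Your attention to the degree bookkeeping (noting that $\deg g'$ may drop below $|S|-1$ when $p$ divides $|S|$, and that only the inequality $\deg f\le q-1$ is ever used) is also sound.
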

 
 \begin{proof}
 
   Clearly $g(x)h(x)=x^q-x$, and then $g'h+gh'=-1$. The last equality implies that $f:=-g'h$ is such that  $V_f=\{0,1\}$ and $\deg f\leq q-1$. In particular, $f$ is an MVSP. It is easy to check that different subsets $S_1$ and $S_2$ of $\F_q$ will give rise to different polynomials $f_1$ and $f_2$. That is, the number of polynomials arising in this way corresponds to the number of nonempty subsets  $S \subsetneq \F_q$, which is  $2^q-2$. Obviously,   the number of polynomials given by  ($\ref{lagrange}$) is  the same. This completes the proof.
   
 \end{proof}

We now seek  an  additional condition on  the polynomial $g$ (in Lemma \ref{lemasplit})  so that
the corresponding $f\in \F_{q}[x]$ satisfies equation (\ref{mvsp}). As we will soon see, it turns out that such a condition  is precisely $g''=0$. That is, $g$ is of the form $x\cdot a(x)^p+b(x)^p \in \F_{q}[x]$. In particular, linear polynomials and arbitrary  polynomials in characteristic two will always be suitable choices for $g$.

\begin{definition}\label{tipoAB} Consider the following sets of polynomials in $\F_q[t]$.

\begin{itemize}
\item $\mathcal{A}=\Big\{\dfrac{g'}{g}(t-t^{q}) : g \text{ is a monic proper divisor of  } t^q-t \text{ and } g''=0\}.$ 
\item$\mathcal{B}=\Big\{1-f: f \in \mathcal{A}\Big\}$.\\
\end{itemize}

We say that $f(x) \in  \F_q[x]$ is  a polynomial  of   type $A$ or  $B$ if $f(t)\in \mathcal{A}$ or $f(t)\in \mathcal{B}$, respectively. Note that since $q$ is odd we have $\mathcal{A}\cap \mathcal{B}=\emptyset$.

\end{definition}

\begin{remark} It is easy to construct polynomials of type $A$, and then of type $B$. One source of such polynomials is the   set  $\mathcal{W}$ in (\ref{W-set}), as  follows from Lemma \ref{lema00} (iv).  So as long as $g \in \mathcal{W}$ is separable, its roots will lie in $\F_{q^k}$ (Lemma \ref{lema00} (i)), and then, assuming  $g$ monic,  we have that $\dfrac{g'}{g}(t-t^{q^k})$ is of type $A$. The polynomials $g=t^{q^{k-1}}+\cdots+t^q+t$ and $g=t^{\frac{q^k-1}{q-1}}-1$ are some examples. 

Alternatively, one can follow a more general procedure: Choose coprime polynomials $a(t),b(t) \in \F_{q}[t]$ such that $g:=ta^p+b^p$ is monic. Thus $g$ is separable, and  for any extension $\F_{q^s}$ containing the splitting field of $g$, the polynomial $\dfrac{g'}{g}(t-t^{q^s})$ will be of type $A$.
\end{remark}

\begin{lemma}\label{tipos} Let $f\in \F_{q}$ be an MVSP, where $V_{f}=\{0,1\}$ and $q$ is odd. Then 
$$f(f-1)=\theta (x^q-x)f', \text{ for some } \theta \in \F_{q}^*,    \text{ if and only if }  f \in \mathcal{A} \cup \mathcal{B}.$$
 Furthermore, $\theta=1$ if $f\in \mathcal{A}$, and $\theta=-1$  if $f\in \mathcal{B}$.
\end{lemma}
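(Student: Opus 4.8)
The plan is to leverage Lemma~\ref{lemasplit}, which tells us that any MVSP $f$ with $V_f=\{0,1\}$ factors as $f=-g'h$, where $g=\prod_{a\in S}(x-a)$, $h=\prod_{b\in\F_q\setminus S}(x-b)$, and $gh=x^q-x$. The equation $f(f-1)=\theta(x^q-x)f'$ we must analyze is precisely equation~(\ref{mvsp}) from Theorem~\ref{Mills} in the case $V_f=\{0,1\}$, $T(x)=x(x-1)$. So the whole statement is an explicit unwinding of that equation into a condition on $g$. First I would record the basic identities: from $gh=x^q-x$ we get $g'h+gh'=-1$, so $f=-g'h=1+gh'$, and hence $f-1=gh'$. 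This immediately gives the clean product
\[
f(f-1)=-g'h\cdot gh'=-(x^q-x)g'h',
\]
which is the left-hand side of the target equation already expressed in terms of $g$ and $h$.

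Next I would compute $f'$. Differentiating $f=-g'h$ gives $f'=-g''h-g'h'$, so the right-hand side is $\theta(x^q-x)f'=-\theta(x^q-x)(g''h+g'h')$. Comparing with the left-hand side, the equation $f(f-1)=\theta(x^q-x)f'$ becomes, after cancelling the common factor $x^q-x$,
\[
g'h'=\theta(g''h+g'h').
\]
The hard part—really the crux of the whole lemma—is to show this identity is equivalent to the pair of conditions ``$g''=0$ and $\theta=1$'' (modulo the symmetric $B$-alternative, obtained by swapping the roles of $g$ and $h$, i.e. replacing $f$ by $1-f$). For the ``if'' direction this is immediate: if $g''=0$ then the identity reads $g'h'=\theta g'h'$, and since $g$ is a proper nonconstant divisor of $x^q-x$ we have $g'\neq 0$ (indeed $\deg g<q$ forces $g'\not\equiv0$ unless $g$ is a $p$-th power, which cannot happen for a nonzero proper separable divisor of $x^q-x$), and similarly $h'\neq0$; hence $\theta=1$, and $f\in\mathcal{A}$ by Definition~\ref{tipoAB}, since $f=-g'h=\frac{g'}{g}(g\cdot(-h))=\frac{g'}{g}(t-t^q)$ as $-gh=t-t^q$.

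For the ``only if'' direction I would argue from $(1-\theta)g'h'=\theta g''h$ that the goal is to force $g''=0$. The key obstacle is ruling out the possibility $\theta\neq 1$ with $g''\neq 0$. Here I expect to use a degree or divisibility comparison: $h=\prod_{b\notin S}(x-b)$ is separable and coprime to $g$, so $h\mid g''h$ forces $h\mid (1-\theta)g'h'$, but $h$ is coprime to $g'$ (shared roots of $g'$ and $h$ would be multiple roots of $x^q-x$, impossible); thus $h\mid h'$, which is absurd by degree unless $h'=0$, i.e. $h$ is a $p$-th power—again impossible for a nonzero proper separable divisor of $x^q-x$ with $q$ odd. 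This rules out $\theta\neq1$ with $g''\neq0$; the remaining case $\theta=1$ immediately yields $g''=0$. The $\mathcal{A}$ versus $\mathcal{B}$ dichotomy then comes from the symmetric observation that replacing $S$ by its complement swaps $g\leftrightarrow h$ and sends $f\mapsto 1-f$, which is exactly the map defining $\mathcal{B}$ from $\mathcal{A}$; and the sign $\theta=-1$ for type $B$ follows since $f(f-1)=(f-1)((f-1)-1)+\text{(sign bookkeeping)}$ is invariant under $f\mapsto 1-f$ up to the sign flip in $f'$. I would close by noting $q$ odd guarantees $\mathcal{A}\cap\mathcal{B}=\emptyset$, so the value of $\theta$ is unambiguous.
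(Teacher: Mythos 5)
Your setup is sound and your ``if'' direction is essentially correct: the use of Lemma \ref{lemasplit}, the identity $g'h+gh'=-1$, and the reduction of the target equation to $(1-\theta)g'h'=\theta g''h$ all check out. But the ``only if'' direction has a fatal flaw. The coprimality claim ``$h$ is coprime to $g'$, since shared roots of $g'$ and $h$ would be multiple roots of $x^q-x$'' is false: multiple roots of $x^q-x=gh$ would come from shared roots of $g$ and $h$, whereas roots of the \emph{derivative} $g'$ are never roots of the separable polynomial $g$ and can perfectly well lie in $\F_q\setminus S$, i.e.\ be roots of $h$. Concretely, over $\F_3$ take $S=\{1,2\}$, so $g=x^2-1$, $h=x$, $f=-g'h=x^2$: here $\gcd(g',h)=\gcd(2x,x)=x\neq 1$. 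Worse, the conclusion you draw from this step --- that $\theta\neq 1$ together with $g''\neq 0$ is impossible --- is itself false: this same $f=x^2$ satisfies $f(f-1)=-(x^3-x)f'$, i.e.\ $\theta=-1$, while $g''=2\neq 0$. This is exactly the type $\mathcal{B}$ case (where it is $h''$, not $g''$, that vanishes), and your argument erases it: as written, it proves that every solution has $\theta=1$ and $g''=0$, which contradicts your own ``if'' direction (and the lemma's ``furthermore'' clause) asserting that type $\mathcal{B}$ polynomials are solutions with $\theta=-1$. The closing appeal to the $f\mapsto 1-f$ symmetry cannot repair this, because that symmetry swaps $g$ and $h$ in the decomposition $f=-g'h$; the dichotomy to be proved is ``$\theta=1$ and $g''=0$, or $\theta=-1$ and $h''=0$'', not ``$\theta=1$ always''.

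The paper sidesteps this by first pinning down $\theta=\pm 1$ via Lemma \ref{lema00}(iii): with $T(x)=x(x-1)$ one has $-\theta\in\{T'(0),T'(1)\}=\{-1,1\}$. It then runs (a version of) your computation only in the case $\theta=1$, obtaining $g''=0$ and hence $f\in\mathcal{A}$, and finally reduces $\theta=-1$ to $\theta=1$ by replacing $f$ with $1-f$, which yields $f\in\mathcal{B}$. If you prefer to stay entirely inside your framework, the correct move is to differentiate $g'h+gh'=-1$ to get $g''h+2g'h'+gh''=0$, combine this with $(1-\theta)g'h'=\theta g''h$ to obtain $(1-\theta)gh''=-(1+\theta)g''h$, and then use the \emph{true} coprimality $\gcd(g,h)=1$ together with degree bounds ($\deg g''<\deg g$, $\deg h''<\deg h$): for $\theta\neq\pm 1$ this forces $g''=h''=0$, hence $g'h'=0$, which is absurd since $g,h$ are nonconstant and separable; $\theta=1$ gives $g''=0$ (type $\mathcal{A}$), and $\theta=-1$ gives $h''=0$ (type $\mathcal{B}$).
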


\begin{proof}

Suppose $f(f-1)=\theta (x^q-x)f'$ for some $\theta \in \F_{q}^*.$ Since $T'(x)=2x-1$ and $V_f=\{0,1\}$, it follows from Lemma \ref{lema00} (iii) that $\theta=\pm 1$.
\begin{enumerate}[\rm(i)]

\item Case $\theta=1$.
Suppose 
\begin{equation}\label{eq5}
f(f-1)=(x^q-x)f'.
\end{equation}
We shall prove that $f\in \mathcal{A}$. In fact, from Lemma \ref{lemasplit}, $f=(x-x^q)\frac{g'}{g}$ for some monic divisor $g$ of $x-x^q$.  Thus  $f'=\frac{(x-x^q)(g''g-{g'}^2)+g'g}{g^2}$, and  (\ref{eq5})  implies
$$(x-x^q)\frac{g'}{g}((x-x^q)\frac{g'}{g}-1)=(x^q-x)\frac{(x-x^q)(g''g-{g'}^2)+g'g}{g^2}.$$
A straightforward simplification  leads to $(x-x^q)g''g=0$, and then $g''=0$. Therefore, $f\in \mathcal{A}$.

\item Case $\theta=-1$. Note that if $f(f-1)=- (x^q-x)f'$, then $1-f$ satisfies $(1-f)\Big((1-f)-1\Big)= (x^q-x)(1-f)'$. 
Thus the case $\theta=1$ implies $1-f\in \mathcal{A}$, and then $f\in \mathcal{B}$.

\end{enumerate}

Now we  prove the converse. Suppose $f\in \mathcal{A}$, i.e., 
$$f=(x-x^q)\frac{g'}{g}, \text{ where $g$ is a monic proper divisor of } x-x^q \text{ and }  g''=0.$$ 
Therefore,  using the fact that $f(f-1)=(x^q-x)G(x)$, for some $G(x) \in \F_{q}[x]$, we obtain
\begin{eqnarray*} 
G(x) & = &\frac{1}{(x^q-x)}f(f-1)=-\frac{h'}{h}\Big((x-x^q)\frac{h'}{h}-1\Big)\\
& = &\frac{h'-(x^q-x){h'}^2}{h^2}=f'.\\
\end{eqnarray*}
A similar computation for  $f\in \mathcal{B}$ implies $G(x)=-f'$.
\end{proof}

The following result will complement Corollary \ref{main} for the case $|V_f|=2<p$.
\begin{theorem}\label{Vf2} 
Let $f(x)$ and $g(y)$ be nonconstant polynomials defined over $\F_q$ such that  $f(x)-g(y)\notin \F_q[x^p,y^p]$. Suppose that  the irreducible components of the curve $\mathcal{F}: f(x)=g(y)$  are defined over $\F_q$, and that  $f(x)$ and $g(y)$ are  MVSPs with $V_f=V_g =\{0,1\}$.  Then the irreducible  components of $\mathcal{F}$ are $q$-Frobenius nonclassical if and only if $f$ and $g$ are both of  type $A$ or of type $B$.
\end{theorem}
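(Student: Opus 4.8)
The plan is to reduce the statement to Theorem \ref{main0} and then feed the resulting identities into the direct half of Theorem \ref{Mills} and into Lemma \ref{tipos}. By Theorem \ref{main0}, the irreducible components of $\mathcal{F}$ are $q$-Frobenius nonclassical precisely when there exist a monic $T\in\F_q[x]$ and a constant $\theta\in\F_q^*$ with $T(f(x))=\theta(x^q-x)f'(x)$ and $T(g(y))=\theta(y^q-y)g'(y)$. Since $\theta\in\F_q^*$ and $T$ is monic, the direct implication of Theorem \ref{Mills}, applied to the first identity, forces $T=\prod_{\gamma\in V_f}(x-\gamma)=x(x-1)$. Thus the two identities become $f(f-1)=\theta(x^q-x)f'$ and $g(g-1)=\theta(y^q-y)g'$ with one and the same $\theta$.

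Next I would invoke Lemma \ref{tipos}, which is legitimate here since $|V_f|=2<p$ forces $q$ odd and since $f,g$ are MVSPs with value set $\{0,1\}$. It tells us that each of these two identities holds if and only if the corresponding polynomial lies in $\mathcal{A}\cup\mathcal{B}$, and it moreover pins down the sign of $\theta$: the value $\theta=1$ occurs exactly when the polynomial is of type $A$, and $\theta=-1$ exactly when it is of type $B$. The decisive point is that $\theta$ is shared between the two identities. Hence if $\theta=1$ both $f$ and $g$ are of type $A$, while if $\theta=-1$ both are of type $B$; in particular a ``mixed'' pair, with one of type $A$ and the other of type $B$, cannot occur. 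This establishes the forward direction.

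For the converse I would simply run the argument backwards. If $f$ and $g$ are both of type $A$, I set $T:=x^2-x$ (monic over $\F_q$) and $\theta:=1$; Lemma \ref{tipos} then yields $f(f-1)=(x^q-x)f'$ and $g(g-1)=(y^q-y)g'$, which are exactly the identities (\ref{FNC&T}) for this choice of $T$ and $\theta$, so Theorem \ref{main0} makes the components $q$-Frobenius nonclassical. The type $B$ case is identical, with $\theta=-1$.

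I expect the only genuinely delicate step to be the first reduction, namely arguing that $T$ is forced to equal $x(x-1)$ rather than some larger monic polynomial. This is precisely where one uses that the constant $\theta$ supplied by Theorem \ref{main0} lies in $\F_q^*$, so that the direct half of Theorem \ref{Mills} applies and rigidly determines $T$ from the value set $V_f=\{0,1\}$. Once $T$ is identified, the theorem is just the coupling of the two sign cases of Lemma \ref{tipos} through the common constant $\theta$, and it is this coupling that produces the clean dichotomy between type $A$ and type $B$.
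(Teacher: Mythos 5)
Your proposal is correct and follows essentially the same route as the paper: Theorem \ref{main0} to obtain the shared pair $(T,\theta)$, the direct half of Theorem \ref{Mills} to force $T=x(x-1)$, and Lemma \ref{tipos} to convert the sign of the common $\theta$ into the type-$A$/type-$B$ dichotomy. The only cosmetic differences are that the paper first extracts $\theta=\pm 1$ via Lemma \ref{lema00}(iii) before applying Lemma \ref{tipos}, and in the converse it re-runs the divisibility argument through Lemmas \ref{extra} and \ref{components} instead of citing the converse half of Theorem \ref{main0} as you do.
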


\begin{proof}
If the  irreducible  components of $\mathcal{F}$ are $q$-Frobenius nonclassical,  then by Theorem \ref{main0},  there exists a monic $T\in \F_{q}[x]$ and $\theta \in \F_{q}^*$ such that 
\begin{equation}\label{dois}
T(f)=\theta(x^q-x)f'(x) \text{  and  } T(g)=\theta (y^q-y)g'(y).
\end{equation}
 From Theorem \ref{Mills},  $T(x)=x(x-1)$, and then Lemma \ref{lema00} (iii) gives $\theta=\pm 1$. Therefore, Lemma \ref{tipos} implies that $f$ and $g$ are of the same type.
Conversely, if $f$ and $g$ are of the same type, then Lemma \ref{tipos} implies that (\ref{dois}) holds for
$T(x)=x(x-1)$ and  some $\theta \in \{-1,1\}$. Therefore $F=f-g$  divides 
$$(x^q-x)f'(x) -  (x^q-x)g'(y)=(x^q-x)\frac{\partial F}{\partial x}+ (x^q-x)\frac{\partial F}{\partial y},$$
and the result follows from Lemmas \ref{extra} and \ref{components}.
\end{proof}


 \section{  The curves $y^n=f(x)$}\label{secao5}
The objective of this section is to apply the previous results to  the curves $y^n=f(x)$. 
The work presented here will subsume the related results of Garcia's investigation of the 
Frobenius nonclassicality of  a class of curves of type $y^n=f(x)$ \cite{garcia-ynf}.
 The case $char (\F_q)=2$ (not covered in \cite{garcia-ynf})   will also be included.
We begin with some preliminary facts.

\begin{remark} \label{zero-mult}
Hereafter, we say that $x_0\in \F_q$ is a root of $f(x)$ of multiplicity $k=0$, if $f(x_0)\neq 0$. Note that if $\deg f<q$, then there always exists such $x_0\in \F_q$.
\end{remark}

\begin{lemma}\label{homogeniza} Consider the curve $\mathcal{F}: y^n=f(x)$, where $f(x) \in \F_q[x]$ is a polynomial of positive degree $d\leq n$ and has an root $x_0 \in \F_q$ of multiplicity $k\geq 0$. Then there exists a polynomial $g(x) \in \F_q[x]$, of degree $n-k$ such that the projective completions of $\mathcal{G}: y^n=g(x)$ and $\mathcal{F}$ are $\F_q$-projectively equivalent. In particular, if the components of $\mathcal{F}$ are $q$-Frobenius nonclassical, then so are the components of $\mathcal{G}$. 
\end{lemma}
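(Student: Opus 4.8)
The plan is to exhibit an explicit $\F_q$-linear projective transformation of $\pp^2$ that sends the point of $\mathcal{F}$ lying over $x=x_0$ off to infinity, thereby dropping the affine degree by exactly the multiplicity $k$. Since $x_0\in\F_q$, I would use the automorphism of $\pp^2$ given in affine coordinates by
$$\phi\colon (x,y)\longmapsto (x',y')=\Bigl(\tfrac{1}{x-x_0},\,\tfrac{y}{x-x_0}\Bigr),$$
whose homogeneous form $[X:Y:Z]\mapsto[Z:Y:X-x_0Z]$ is manifestly defined over $\F_q$ and invertible (its matrix has determinant $-1$). The reason for scaling the $y$-coordinate by $(x-x_0)^{-1}$ is that it preserves the special shape ``$y^n=$ polynomial in $x$''; indeed $(x_0,0)$ maps to the point at infinity $[1:0:0]$.

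First I would homogenize $\mathcal{F}$ to the degree-$n$ projective curve $Y^n=\sum_{i=0}^d c_iX^iZ^{n-i}$, which is legitimate precisely because $d\le n$, then apply $\phi$ and read off the transformed affine equation. A direct substitution shows it is $y'^n=g(x')$ with
$$g(x')=x'^{\,n}f\!\left(x_0+\tfrac{1}{x'}\right),$$
which is just the identity $y'^n=y^n/(x-x_0)^n=f(x)/(x-x_0)^n$ rewritten in the new variable. Writing $f(x)=(x-x_0)^k u(x)$ with $u(x_0)\neq 0$ and $\deg u=d-k$, this becomes $g(x')=x'^{\,n-k}u(x_0+1/x')=\sum_{j=0}^{d-k} b_j x'^{\,n-k-j}$, where the $b_j$ are the coefficients of $u$ expanded about $x_0$. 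Since $b_0=u(x_0)\neq 0$ the top term $b_0x'^{\,n-k}$ survives, and since the lowest exponent is $n-d\geq 0$ there are no negative powers; hence $g\in\F_q[x']$ is a genuine polynomial of degree exactly $n-k$. This is the polynomial claimed, and $\mathcal{G}\colon y^n=g(x)$ is the image $\phi(\mathcal{F})$, so the two projective completions are $\F_q$-projectively equivalent.

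For the final assertion I would invoke the geometric meaning of Frobenius nonclassicality recalled just after (\ref{frob}): the $\F_q$-Frobenius $\Phi_q$ sends each simple point $P$ to the tangent line at $P$. An $\F_q$-projective transformation $\phi$ commutes with $\Phi_q$ (being defined over $\F_q$) and, as an automorphism of $\pp^2$, carries tangent lines to tangent lines, so $T_{\phi(P)}\phi(\mathcal{C})=\phi(T_P\mathcal{C})$; combining these, $\Phi_q(P)\in T_P\mathcal{C}$ is equivalent to $\Phi_q(\phi(P))\in T_{\phi(P)}\phi(\mathcal{C})$. Moreover $\phi$ maps the irreducible $\F_q$-components of $\mathcal{F}$ bijectively onto those of $\mathcal{G}$. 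Hence each component of $\mathcal{G}$ satisfies (\ref{frob}) precisely when the corresponding component of $\mathcal{F}$ does, which gives the ``in particular'' clause.

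The main obstacle I anticipate is the bookkeeping in the second step: one must verify both that the leading coefficient $b_0=u(x_0)$ is nonzero (so the degree drops by exactly $k$, not more) and that no negative powers of $x'$ appear (so that $g$ is genuinely a polynomial), the latter resting on the hypothesis $d\le n$. A secondary point needing a clean, if routine, justification is that Frobenius nonclassicality is invariant under $\F_q$-projective equivalence and that $\phi$ respects the decomposition into components defined over $\F_q$.
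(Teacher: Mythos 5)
Your proposal is correct and follows essentially the same route as the paper: the paper assumes $x_0=0$ without loss of generality, homogenizes, interchanges $x$ and $z$, and dehomogenizes, which is exactly your map $[X:Y:Z]\mapsto[Z:Y:X-x_0Z]$ with the translation folded in, producing the same polynomial $g(x)=x^n f(x_0+1/x)$ of degree $n-k$. Your more detailed verification of the degree bookkeeping and of the invariance of Frobenius nonclassicality under $\F_q$-projective equivalence (via the tangent-line interpretation) simply spells out steps the paper leaves implicit.
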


\begin{proof}

Without loss of generality, we can assume $x_0=0$ and write  $f(x)=a_dx^d+a_{d-1}x^{d-1}+\cdots+a_kx^k$, where $k\geq 0$ and $a_k\neq0$. Homogenizing $y^n-f(x)$ w.r.t. the variable $z$ yields
$$y^n-(a_dx^dz^{n-d}+a_{d-1}x^{d-1}z^{n-d+1}+\cdots+a_kx^kz^{n-k}).$$
Interchanging $x$ and $z$, and dehomogenizing w.r.t. the variable $z$ leads to a curve $\mathcal{G}: y^n=g(x)$, where $\deg g=n-k$, as desired. Note that preceding operations correspond to an $\F_q$-projective change of coordinates. And so  the $q$-Frobenius nonclassicality of the components  is preserved.

\end{proof}

\begin{lemma}\label{galois} Let $n$ be a divisor of $q-1$ and $f(x) \in \F_{q}[x]$ be a nonconstant polynomial. If  $y^n=f(x)$ has a solution
$(x_0,y_0) \in \F_q\times \F_q^*$, then the $\F_q$-irreducible factors of $y^n-f(x)$ are absolutely irreducible.
\end{lemma}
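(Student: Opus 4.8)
The plan is to compare the factorization of $y^n-f(x)$ over $\F_q(x)$ with its factorization over $\overline{\F_q}(x)$ and to show that the two coincide. First I would record the two consequences of $n\mid q-1$: since $q\equiv 0 \pmod p$ we have $\gcd(n,p)=1$, and $\F_q$ (hence $\overline{\F_q}$) already contains all $n$-th roots of unity $\mu_n$. Because $y^n-f(x)$ is monic and primitive in $y$, Gauss's lemma lets me pass freely between factorizations in $\F_q[x,y]$ and in $\F_q(x)[y]$, while ``absolutely irreducible'' means ``irreducible over $\overline{\F_q}(x)[y]$''. The key classical input is Capelli's theorem in its Kummer form: over any field $M$ with $\mu_n\subseteq M$ and $\gcd(n,p)=1$, the polynomial $y^n-a$ splits into $n/e$ irreducible factors, each of degree $e$, where $e$ is the order of $a$ in $M^*/(M^*)^n$. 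Applying this to $M=\F_q(x)$ and $M=\overline{\F_q}(x)$ reduces the whole statement to a single claim: the order $e_K$ of $f$ in $\F_q(x)^*/(\F_q(x)^*)^n$ equals its order $e_L$ in $\overline{\F_q}(x)^*/(\overline{\F_q}(x)^*)^n$. Indeed, both factorizations would then have $n/e$ factors of equal degree, and since the one over $\overline{\F_q}(x)$ refines the one over $\F_q(x)$, they must agree.

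Next I would compute these two orders by inspecting valuations. Write $f=c\prod_i P_i^{a_i}$ with $c\in\F_q^*$ and $P_i\in\F_q[x]$ distinct monic irreducibles; since $\F_q$ is perfect each $P_i$ is separable, so over $\overline{\F_q}$ the multiplicities of the roots of $f$ are exactly the $a_i$. An element of $\overline{\F_q}(x)^*$ is an $n$-th power precisely when all of its valuations are divisible by $n$ (its leading constant is automatically an $n$-th power), whereas in $\F_q(x)^*$ one needs in addition that the leading constant lie in $(\F_q^*)^n$. Hence, setting $g_0:=\gcd_i a_i$, I get $e_L=n/\gcd(n,g_0)$, while $e_K=\operatorname{lcm}(e_L,e_c)$, where $e_c$ is the order of $c$ in the cyclic group $\F_q^*/(\F_q^*)^n$ (of order $n$, since $n\mid q-1$). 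Thus $e_K=e_L$ is equivalent to the single divisibility $e_c\mid e_L$, i.e.\ to $c^{e_L}\in(\F_q^*)^n$.

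This constant obstruction is exactly where the hypothesis of an affine solution enters, and I expect it to be the crux of the argument. Since $\gcd(n,g_0)\mid a_i$ for every $i$, a short check gives $n\mid e_L a_i$, so $\prod_i P_i^{e_L a_i}=R(x)^n$ for some $R\in\F_q[x]$, whence $f^{e_L}=c^{e_L}R^n$. Evaluating at the given $x_0$ and using $f(x_0)=y_0^n$ with $y_0\in\F_q^*$ (so that $f(x_0)\neq 0$ and $R(x_0)\neq 0$), I obtain $c^{e_L}=f(x_0)^{e_L}/R(x_0)^n=\bigl(y_0^{e_L}/R(x_0)\bigr)^n\in(\F_q^*)^n$. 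Therefore $e_c\mid e_L$, so $e_K=e_L$, the factorizations over $\F_q(x)$ and $\overline{\F_q}(x)$ coincide, and every $\F_q$-irreducible factor of $y^n-f(x)$ remains irreducible over $\overline{\F_q}$. Without the rational point the constant $c$ could have order $e_c\nmid e_L$, forcing $e_K>e_L$ and causing the $\F_q$-factors to split upon base change to $\overline{\F_q}$; so this step is simultaneously the heart of the proof and the point at which the hypothesis is indispensable.
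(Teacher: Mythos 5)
Your proof is correct, and it takes a genuinely different route from the paper's. The paper argues geometrically: since $n\mid q-1$ and $y_0^n=f(x_0)\neq 0$, the fiber $y^n=f(x_0)$ consists of $n$ distinct points of $\F_q\times\F_q^*$, so (after relabeling) every $\F_q$-irreducible factor $F_i$ of $y^n-f(x)$ vanishes at some point $P\in\F_q\times\F_q^*$; if $F_i$ were not absolutely irreducible, an irreducible factor $G_1\in\overline{\F_q}[x,y]\setminus\F_q[x,y]$ of $F_i$ through $P$ and its Frobenius conjugate $G_2$ would give two distinct components through $P$, making $P$ a singular point of $y^n-f(x)=0$, which is impossible because $\frac{\partial}{\partial y}\bigl(y^n-f(x)\bigr)=ny^{n-1}$ does not vanish at $P$ (using $p\nmid n$ and $y_0\neq 0$). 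You instead determine both factorizations exactly via the Kummer form of Capelli's theorem, reducing the statement to comparing the order of $f$ modulo $n$-th powers in $\F_q(x)^*$ and in $\overline{\F_q}(x)^*$, and you use the rational point only to kill the residual obstruction, namely the class of the leading coefficient $c$ in $\F_q^*/(\F_q^*)^n$. Each of your steps checks out: Capelli applies since $\mu_n\subseteq\F_q$ and $p\nmid n$ (both consequences of $n\mid q-1$); the valuation computations $e_L=n/\gcd(n,g_0)$ and $e_K=\operatorname{lcm}(e_L,e_c)$ are right (separability of the $P_i$ over the perfect field $\F_q$ is what makes the $\overline{\F_q}$-multiplicities equal to the $a_i$); and the evaluation $c^{e_L}=\bigl(y_0^{e_L}/R(x_0)\bigr)^n$ is legitimate because $f(x_0)\neq 0$ forces $R(x_0)\neq 0$. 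As for what each approach buys: the paper's argument is shorter, essentially self-contained, and generalizes verbatim to any plane curve each of whose $\F_q$-irreducible factors passes through a smooth $\F_q$-rational point of the whole curve; yours is tailored to the superelliptic shape $y^n-f$ but yields strictly more information, namely the exact number $n/e$ and common degree $e$ of the irreducible factors, plus a precise description of the only way the conclusion can fail without a rational point (when $c^{e_L}\notin(\F_q^*)^n$), none of which the paper's proof provides.
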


\begin{proof} 

Let us write  $y^n-f(x)=\prod\limits_{i=1}^r F_i(x,y)$, where $F_i(x,y) \in \F_q[x,y]$ are irreducible. Considered as polynomials in the variable $y$, we clearly have that 

\begin{equation}\label{soma}
\deg _{y}F_i\geq 1 \text{ and } \sum\limits_{i=1}^r \deg _{y}F_i=n.
\end{equation}

 Since $n|(q-1)$ and  $(x_0,y_0) \in \F_q\times \F_q^*$ is such that $y_0^n=f(x_0)$,  the equation $y^n=f(x_0)$ has $n$ distinct roots in $\F_q$. Thus (\ref{soma}) implies that
each  $F_i(x_0,y)$ is a nonconstant separable polynomial whose roots lie in $\F_q$. Suppose one of the factors of $y^n-f(x)$, say $F_1(x,y)$, is not absolutely irreducible. Without loss of generality, we may also assume $F_1(P)=0$ for $P=(x_0,y_0)$. Let $G_1=\sum\alpha_{i,j}x^iy^j \in \overline{\F_q}[x,y]\backslash \F_q[x,y]$ be a factor of $F_1$ such that  $G_1(P)=0$. Clearly,
$G_2:=\sum{\alpha_{i,j}}^qx^iy^j$ is another Galois conjugate dividing $F_1$, and the fact that  $P$ is an $\F_q$-point of $G_1=0$ implies $G_2(P)=0=G_1(P)$.
Therefore, $P=(x_0,y_0)$, where  $y_0\neq 0$,  is a singular point of $y^n-f(x)=0$, which is a  contradiction to $\frac{\partial ( y^n-f(x))}{\partial y}(P)\neq 0$.

\end{proof}

\begin{theorem}\label{thm2} Let $f(x) \in \F_{q}[x]$ be a nonconstant polynomial and $n \geq 1$ be an integer such that
$y^n-f(x) \notin \F_{q}[x^p,y^p]$. Then the irreducible components of  $\mathcal{F}: y^n=f(x)$  are $q$-Frobenius  nonclassical if and only if  $n\mid q-1$ and 

\begin{equation}\label{garcia}
n\cdot f(x)(f(x)^{\frac{q-1}{n}}-1)=(x^q-x)f^\prime(x).
\end{equation}

\end{theorem}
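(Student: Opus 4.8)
The plan is to reduce Theorem \ref{thm2} to the master equation (\ref{FNC&T}) of Theorem \ref{main0}, applied with $g(y)=y^n$. First I would set $g(y)=y^n$ and compute $g'(y)=ny^{n-1}$, so that the right-hand condition in (\ref{FNC&T}), namely $T(g(y))=\theta(y^q-y)g'(y)$, becomes $T(y^n)=\theta n(y^q-y)y^{n-1}$. The goal is to extract from this single identity both the arithmetic constraint $n\mid q-1$ and the precise shape of $T$, and then feed that $T$ into the left-hand equation $T(f(x))=\theta(x^q-x)f'(x)$ to obtain (\ref{garcia}). The forward direction thus hinges on determining $T$ explicitly from the monomial $g(y)=y^n$; the converse direction will be the easy bookkeeping check that the exhibited $T$ genuinely satisfies both halves of (\ref{FNC&T}), whence Theorem \ref{main0} returns Frobenius nonclassicality.

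For the forward direction I would first invoke Theorem \ref{main0}, which applies once I know the components are defined over $\F_q$ and absolutely irreducible. Here Lemma \ref{galois} is the key enabling tool: it guarantees that whenever $n\mid q-1$ and the curve has a suitable $\F_q$-point, the $\F_q$-irreducible factors are absolutely irreducible, so the hypotheses of Theorem \ref{main0} are met. Granting (\ref{FNC&T}), Theorem \ref{Mills} tells me $g(y)=y^n$ is an MVSP with $T=\prod_{\gamma\in V_g}(y-\gamma)$; since the value set of $y^n$ on $\F_q$ is $\{0\}\cup\{n\text{-th powers}\}$, I would argue that $V_g$ consists of $0$ together with the $(q-1)/d'$ many $n$-th powers (where $d'=\gcd(n,q-1)$), and that the MVSP/degree count forces $n\mid q-1$ with $|V_g|=1+(q-1)/n$. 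Then $T(y)=y\prod_{\zeta}(y-\zeta)$ over the $n$-th powers $\zeta$, and using $\prod_{\zeta}(y-\zeta)=y^{(q-1)/n}-1$ I would identify $T(y)=y\bigl(y^{(q-1)/n}-1\bigr)$ up to the scaling pinned down by matching leading or low-order coefficients against $\theta n(y^q-y)y^{n-1}$, which simultaneously forces $\theta=1/n$.

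Substituting this $T$ into the left equation gives $f(x)\bigl(f(x)^{(q-1)/n}-1\bigr)=\theta(x^q-x)f'(x)=\tfrac1n(x^q-x)f'(x)$, and clearing $n$ yields exactly (\ref{garcia}); the converse reverses these substitutions and appeals to Lemmas \ref{extra} and \ref{components} to pass from the divisibility of $y^n-f(x)$ into $(x^q-x)f'(x)-(y^q-y)g'(y)$ back to the componentwise nonclassicality. The main obstacle I anticipate is the careful identification of $T$ and the constant $\theta$: I must verify that the value set of $y^n$ really is $\{0\}\cup\mu$, where $\mu$ is the group of $n$-th powers (equivalently the $\tfrac{q-1}{n}$-th powers when $n\mid q-1$), that the count of these values matches the MVSP lower bound so Theorem \ref{Mills} applies cleanly, and that the factorization $\prod_{\zeta}(y-\zeta)=y^{(q-1)/n}-1$ together with coefficient matching unambiguously yields $\theta=1/n$. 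A subtlety worth flagging is the hypothesis $y^n-f(x)\notin\F_q[x^p,y^p]$, needed so that Theorem \ref{main0} is legitimately in force, and the need to handle (via Lemma \ref{galois} or a separate point-existence argument) the requirement that the curve carries an $\F_q$-point with nonzero $y$-coordinate.
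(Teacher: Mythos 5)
Your plan has the same skeleton as the paper's proof: reduce to Theorem \ref{main0} with $g(y)=y^n$, determine $T$ and $\theta$ from the $g$-equation, substitute that $T$ into the $f$-equation to obtain (\ref{garcia}), and in the converse combine a point-existence observation with Lemma \ref{galois} before applying Theorem \ref{main0} (equivalently, Lemmas \ref{extra} and \ref{components}). The one methodological difference is how you pin down $T$ and the divisibility $n\mid q-1$: you route through Theorem \ref{Mills}, the MVSP property of $y^n$, and a count of its value set, whereas the paper reads everything off the single identity $T(y^n)=\theta n\,(y^{q+n-1}-y^n)$ --- the left side is a polynomial in $y^n$, so $n$ divides every exponent on the right, forcing $n\mid q-1$; then $T(z)=z\bigl(z^{(q-1)/n}-1\bigr)$, and monicity of $T$ forces $\theta n=1$. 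Your detour is valid (the count $1+(q-1)/\gcd(n,q-1)$ set against the MVSP bound $1+\lfloor (q-1)/n\rfloor$ does force $\gcd(n,q-1)=n$), but it is longer than necessary; the direct exponent argument is what keeps the paper's proof to a few lines.

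There is one logical misstep you should repair. You invoke Lemma \ref{galois} in the \emph{forward} direction as ``the key enabling tool'' for meeting the hypotheses of Theorem \ref{main0}. As written this is circular: Lemma \ref{galois} presupposes $n\mid q-1$, which in the forward direction is part of the conclusion, and it also needs a rational point with $y_0\neq 0$ that you have not yet produced. It is also unnecessary there: the forward hypothesis is that the components are $q$-Frobenius nonclassical, and in this paper that notion is defined only for irreducible curves defined over $\F_q$, so the definedness hypothesis of Theorem \ref{main0} holds by assumption. Lemma \ref{galois} belongs exclusively to the \emph{converse}, precisely at the point-existence issue you flag at the end, which is settled as follows: by (\ref{garcia}), for every $\alpha\in\F_q$ either $f(\alpha)=0$ or $f(\alpha)^{(q-1)/n}=1$, so every nonzero value of $f$ on $\F_q$ is an $n$-th power; moreover $f'\neq 0$ (else $p\mid n$ and $y^n-f(x)\in\F_q[x^p,y^p]$, against the hypothesis), so comparing degrees in (\ref{garcia}) gives $\deg f\le n\le q-1<q$, whence $f$ cannot vanish identically on $\F_q$. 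This yields a point in $\F_q\times\F_q^*$, Lemma \ref{galois} then makes the $\F_q$-irreducible factors absolutely irreducible, and the converse of Theorem \ref{main0} finishes the argument.
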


\begin{proof} 

Note that if the  irreducible components of  $\mathcal{F}$ are $q$-Frobenius  nonclassical, then
(\ref{FNC&T}) in Theorem \ref{main0}  implies
$$T(y^n)=n\theta (y^{q-1+n}-y^n),$$
for some monic polynomial $T \in \F_q[x]$, and $\theta\in \F_q^*$. Therefore,
\begin{center}
$n|(q-1)$,  $T(x)=x(x^{\frac{q-1}{n}}-1)$  and  $\theta =1/n,$
\end{center}
and then  (\ref{garcia}) follows when (\ref{FNC&T}) is applied to $f(x)$.
Conversely, if $n|(q-1)$ and  (\ref{garcia}) holds, then one can readily verify that $y^n=f(x)$
has a solution in $\F_q\times\F_q^*$. Thus from Lemma \ref{galois}, the irreducible components 
of $\mathcal{F}$ are defined over $\F_q$. Now, since (\ref{garcia}) implies (\ref{FNC&T}) for $T(x)=x(x^{\frac{q-1}{n}}-1)$ \text{ and } $\theta =1/n$, the result follows from Theorem \ref{main0}.

\end{proof}

\begin{corollary}\label{corogeral} Notation and hypotheses as in Theorem \ref{thm2}. If the components of  $\mathcal{F}: y^n=f(x) $ are $q$-Frobenius  nonclassical, then

\begin{enumerate}[\rm(i)]
\item $p\nmid n$ and $f'\neq 0$.
\item $\dfrac{nq}{n+q-1}\leq\deg f\leq n$, and the upper bound  (resp. lower bound)   for $\deg f$ is attained if and only if $p\nmid \deg f$ (resp. $f^{\prime}$ is constant).
\item  if $f=c\prod\limits_{i=1}^{s}(x-a_i)^{k_i} \in  \overline{\F_q}[x]$, then $a_i \in \F_q$ if and only if $p\nmid k_i$.  In particular, all simple roots of $f$ lie in $\F_q$.
\item $f$ has an  $\F_q$-root,  and   if $f$ has a simple root, then $n\equiv 1\mod p$.
\item $n\equiv 1\mod p$ if and only if $f^{\prime\prime}= 0$.
\item if an $\F_q$-root of $f$ has  multiplicity $k$, where $0< k< \deg f$, then $k\leq \frac{n-1}{|V_f|}$ and $k\equiv n \mod p$.
\end{enumerate}

\end{corollary}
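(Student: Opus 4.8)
The plan is to read all six assertions off the single identity
\begin{equation}\label{eq:starplan}
T(f)=\tfrac{1}{n}(x^q-x)f',\qquad T(x):=x\bigl(x^{(q-1)/n}-1\bigr),
\end{equation}
which is (\ref{garcia}) rewritten and, since $n\mid q-1$, is an instance of (\ref{mvsp}) with $\theta=1/n$. By Theorem \ref{Mills} it forces $f$ to be an MVSP with $T=\prod_{\gamma\in V_f}(x-\gamma)$, so $0\in V_f$ and $|V_f|=\deg T=1+(q-1)/n$; in particular Lemma \ref{lema00} (and Lemma \ref{lema0} when $|V_f|>2$) applies to $f$. Writing $m:=(q-1)/n\ge1$ one has $T(x)=x^{m+1}-x$ and $T'(x)=(m+1)x^{m}-1$, so I record at the outset the value $T'(0)=-1$.

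For (i), $p\nmid n$ is immediate from $n\mid q-1$ and $\gcd(q-1,p)=1$, while $f'\neq0$ because the right-hand side of (\ref{eq:starplan}) cannot vanish identically (this is also Lemma \ref{lema00}(iii)). For (ii) I equate degrees in (\ref{eq:starplan}): the left side has degree $|V_f|\deg f$ and the right side $q+\deg f'$, so $\deg f'=|V_f|\deg f-q$. Inserting $0\le\deg f'\le\deg f-1$ yields both displayed bounds; since $\deg f'=\deg f-1$ precisely when $p\nmid\deg f$ and $\deg f'=0$ precisely when $f'$ is constant, the two equality criteria follow.

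Assertions (iii)--(v) are Lemma \ref{lema00} specialised to the value $0\in V_f$. Part (iii) is Lemma \ref{lema00}(i)--(ii): a root of $f$ outside $\F_q$ has multiplicity divisible by $p$, an $\F_q$-root has multiplicity prime to $p$, and the simple-root claim is the case of multiplicity $1$. For (iv), the membership $0\in V_f$ already provides an $\F_q$-root of $f$, and for a simple one Lemma \ref{lema00}(ii) reads $T'(0)=-\theta$, i.e.\ $-1=-1/n$ in $\F_q$, whence $n\equiv1\pmod p$. Part (v) is Lemma \ref{lema00}(iv): $f''=0$ iff $T'$ is constant, and $T'=(m+1)x^{m}-1$ is constant iff $p\mid m+1$, which is equivalent to $n\equiv1\pmod p$ via $nm=q-1\equiv-1\pmod p$.

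Finally (vi). The congruence is the mechanism of (iv): for an $\F_q$-root of multiplicity $k$, Lemma \ref{lema00}(ii) gives $T'(0)=-\theta k$, i.e.\ $-1=-k/n$, so $k\equiv n\pmod p$. The inequality $k\le\frac{n-1}{|V_f|}$ is where I expect the real work, and my plan is to invoke Mills' structure theorem. When $n\not\equiv1\pmod p$ the congruence forces $k\equiv n\not\equiv1$, so by Lemma \ref{lema0}(i) the value $0$ cannot be a nonspecial value and must be the distinguished $\gamma_0$ of Theorem \ref{main1}; then $f=L_0^{\,v}N^{p^{mk}}$ with $L_0$ separable and (the key structural input) $N$ free of $\F_q$-roots, so every $\F_q$-root of $f$ has multiplicity exactly $v$ and $k=v$. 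Now Theorem \ref{main1}(a) supplies $1+v(|V_f|-1)=p^{mk}$, giving $v|V_f|=v+p^{mk}-1$, and forces $L_0'$ to be a $p^{mk}$-th power, so that $\deg L_0\ge p^{mk}+1$ whenever $\deg L_0>1$; a short check then shows $\deg f\ge v+p^{mk}$ in every case, whence $k|V_f|=v|V_f|=v+p^{mk}-1\le\deg f-1\le n-1$. The main obstacle I anticipate is twofold: justifying that the $\F_q$-roots of the distinguished fibre all carry the same multiplicity $v$ (so that $k=v$), and treating the leftover case $n\equiv1\pmod p$, where $f''=0$ makes $f'$ a $p$-th power and $0$ need no longer be distinguished; there I would run the analogous bound on a nonspecial fibre, or reduce to a normal form via Remark \ref{troca} and a shift $x\mapsto x+a$.
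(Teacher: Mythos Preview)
Your treatment of (i)--(v) is correct and essentially matches the paper, with only cosmetic differences: for (iv) and (v) the paper differentiates (\ref{garcia}) once to obtain the auxiliary identity $(n-1)f'(f^{(q-1)/n}-1)=(x^q-x)f''$ and reads both claims off that, whereas you go through Lemma~\ref{lema00} directly. Your derivation of the congruence $k\equiv n\pmod p$ in (vi) via $T'(0)=-\theta k$ is also clean and correct.

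The real gap is the inequality in (vi). Your plan through Mills' Theorem~\ref{main1} runs into exactly the obstacles you flag, and they are not easily repaired: the condition $L_0\nmid N$ in Theorem~\ref{main1}(a) does not prevent individual roots of $L_0$ from occurring in $N$, so you cannot conclude that every $\F_q$-root of the distinguished fibre has multiplicity exactly $v$; the case $|V_f|=2$ (that is, $n=q-1$) lies outside the hypotheses of both Theorem~\ref{main1} and Lemma~\ref{lema0}; and the residual case $n\equiv1\pmod p$ is left entirely open. The paper avoids all of this with a single projective change of coordinates (Lemma~\ref{homogeniza}): swapping $x$ and $z$ moves the $\F_q$-root of multiplicity $k$ to infinity and yields an $\F_q$-equivalent curve $y^n=g(x)$ with $\deg g=n-k$. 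Applying the already-established bound in (ii) to \emph{this} curve gives $n-k\ge nq/(n+q-1)$, which rearranges to $k\le (n-1)/|V_f|$; and since $k>0$ forces $\deg g=n-k<n$, the equality criterion in (ii) yields $p\mid(n-k)$, recovering the congruence as well. This is a two-line argument once Lemma~\ref{homogeniza} is in hand, and it needs no case split and no appeal to the fine structure of MVSPs.
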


\begin{proof} 

\begin{enumerate}[\rm(i)]
\item From equation (\ref{garcia}),  $p\nmid n$ if and only if $f'\neq 0$. Since $n|(q-1)$ the result follows.
\item Equating degrees on both side of (\ref{garcia}) yields
 $$ (\frac{q-1}{n} +1)\deg f= \deg f^\prime+q.$$
 Using both inequalities of  $0\leq \deg  f^{\prime} \leq \deg f -1$ gives the result.
\item  Since  $f\in \F_{q}[x]$ satisfies  equation (\ref{mvsp}) in Theorem \ref{Mills}, this follows directly from  Lemma \ref{lema00} (i), (ii).

\item Since $0\in V_{y^n}=V_f$, it is obvious that $f$ has an $\F_q$-root.  Differentiating both sides of (\ref{garcia}) gives
\begin{equation}\label{diff}
 (n-1)f^{\prime}(x)(f(x)^{\frac{q-1}{n}}-1)=(x^q-x)f^{\prime\prime}(x).
\end{equation}
If $x_0 \in \overline{\F_q}$ is a simple root of $f$, then Lemma \ref{lema00} implies $x_0 \in \F_{q}$. Thus evaluating both sides of  (\ref{diff}) at a simple root of $f$, we arrive at $n\equiv 1 \mod p$.
\item Using that  $f^{\prime}\neq 0$,  the claim follows directly from equation  (\ref{diff}).
\item From Lemma \ref{homogeniza}, we may assume that $\deg f=n-k$. Now  item (ii) above implies $n-k\geq \dfrac{nq}{n+q-1}$, and then $k \leq \dfrac{n-1}{1+\frac{q-1}{n}}=\frac{n-1}{|V_f|},$ which proves the first assertion.  Also, since $\deg f=n-k<n$,  again from item (ii),  we have  $p|(n-k)$, which finishes the proof.
\end{enumerate}

\end{proof}

The following result retrieves  \cite[Theorem 2]{G-V} and the ``Remark" in \cite[p. 38]{garcia-ynf}. We will see that the hypotheses $p\neq 2$ and $p\nmid nd$, included therein, are not necessary.

\begin{corollary} Let $a,b \in \F_{q}^*$, and let   $n$ and $d$ be positive integers.  If $$\mathcal{F}: y^n=ax^d+b$$ is $q$-Frobenius nonclassical curve, then $\mathcal{F}$ is a Fermat curve of degree $n=d=\dfrac{q-1}{q^{\prime}-1}$, and $a,b \in \F_{q'}^*$.
\end{corollary}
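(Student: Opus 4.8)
The plan is to specialize the general machinery developed for the curves $y^n=f(x)$ to the very concrete polynomial $f(x)=ax^d+b$ and squeeze out the stated rigidity. By Theorem \ref{thm2}, the Frobenius nonclassicality of $\mathcal{F}:y^n=ax^d+b$ forces $n\mid q-1$ together with the identity
\begin{equation}\label{eq:specialize}
n\cdot f(x)\bigl(f(x)^{\frac{q-1}{n}}-1\bigr)=(x^q-x)f'(x),
\end{equation}
so the whole proof reduces to extracting information from \eqref{eq:specialize} with $f(x)=ax^d+b$ and $f'(x)=ad\,x^{d-1}$. First I would record the degree relation from Corollary \ref{corogeral}(ii): since $f'(x)=adx^{d-1}$ is nonzero (which also gives $p\nmid nd$ as a byproduct via Corollary \ref{corogeral}(i)), the derivative is a pure monomial, hence nonconstant unless $d=1$, so the upper bound $\deg f=n$ is attained, giving $d=n$. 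This immediately collapses the two exponents to a single $n=d$.

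With $d=n$ the polynomial is $f(x)=ax^n+b$ and $f'(x)=an x^{n-1}$, so \eqref{eq:specialize} becomes
\begin{equation}\label{eq:afterdn}
n(ax^n+b)\bigl((ax^n+b)^{\frac{q-1}{n}}-1\bigr)=(x^q-x)\cdot an x^{n-1}.
\end{equation}
The right-hand side is $an(x^{q+n-1}-x^n)$, a binomial. The key structural step is to analyze the root multiplicities of $f$ through Corollary \ref{corogeral}: the only $\overline{\F_q}$-roots of $ax^n+b$ are the $n$-th roots of $-b/a$, all of multiplicity $1$ (as $p\nmid n$), so by Corollary \ref{corogeral}(iii) every root lies in $\F_q$; equivalently $-b/a$ is an $n$-th power in $\F_q$ and the $n$-th roots of unity lie in $\F_q$ (consistent with $n\mid q-1$). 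Next I would compare the identity \eqref{eq:afterdn} as an equation in $V_f$: the left side factors through $T(w)=w(w^{(q-1)/n}-1)$ evaluated at $w=f(x)$, and by Theorem \ref{Mills} this $T$ is exactly $\prod_{\gamma\in V_f}(x-\gamma)$, so $V_f=\{\gamma:\gamma^{(q-1)/n}=0\text{ or }1\}=\{0\}\cup\mu$, where $\mu$ is the group of $\tfrac{q-1}{n}$-th roots of unity together with $0$. Counting, $|V_f|=1+\tfrac{q-1}{n}$, which matches the MVSP size $\lceil q/n\rceil$ and pins down the value set as $V_f=\F_{q'}$ where $q'-1=\tfrac{q-1}{n}$, i.e. $n=\tfrac{q-1}{q'-1}$.

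The main obstacle — and the heart of the argument — is promoting the numerical coincidence $n=d=\tfrac{q-1}{q'-1}$ into the precise field-of-definition claim $a,b\in\F_{q'}^*$ and the Fermat normal form. For this I would exploit that $V_f=\F_{q'}$ forces $f(\alpha)\in\F_{q'}$ for all $\alpha\in\F_q$, hence $ax^n+b$ takes only $\F_{q'}$-values; since $x^n$ runs exactly over $\F_{q'}$ as $x$ ranges over $\F_q$ (because $n\mid q-1$ and the image of the $n$-th power map is the subgroup of index $n$, namely $\F_{q'}^*\cup\{0\}$), the affine map $w\mapsto aw+b$ must carry $\F_{q'}$ into $\F_{q'}$, which yields $a,b\in\F_{q'}$ by evaluating at $w=0,1\in\F_{q'}$. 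A final rescaling $x\mapsto cx$, $y\mapsto ey$ over $\F_{q'}$ (absorbing $a$ and clearing $b$), together with $b\neq 0$ (else $0$ would be a root of multiplicity $n$, contradicting the simple-root count), then puts $\mathcal{F}$ in the Fermat shape of \eqref{fermat} of degree $n=\tfrac{q-1}{q'-1}$ with $a,b\in\F_{q'}^*$, completing the proof. I expect the delicate point to be verifying that the $n$-th power map on $\F_q^*$ has image exactly $\F_{q'}^*$, which is where the hypothesis $q'-1=\tfrac{q-1}{n}$ is used in an essential way.
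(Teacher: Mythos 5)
Your reduction to $d=n$ and your use of Theorem \ref{Mills} are sound in substance, but the pivotal step of the whole corollary is missing. From (\ref{garcia}) with $T(w)=w\bigl(w^{(q-1)/n}-1\bigr)$, Theorem \ref{Mills} does give exactly $V_f=\{0\}\cup\mu$, where $\mu$ is the group of $\frac{q-1}{n}$-th roots of unity in $\F_q$. But your next sentence, that this ``pins down the value set as $V_f=\F_{q'}$ where $q'-1=\frac{q-1}{n}$'', is a genuine gap: the set $\{0\}\cup\mu$ is a subfield of $\F_q$ if and only if $\frac{q-1}{n}+1$ is a power of $p$, and nothing you have proved forces this. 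The cardinality count cannot help, since $|V_f|=1+\frac{q-1}{n}$ holds for \emph{every} divisor $n$ of $q-1$, field or not (e.g.\ $q=7$, $n=2$ gives $\{0,1,2,4\}$, which is not a field). The same unproved assumption silently reappears in your final paragraph, where you call the image of the $n$-th power map ``$\F_{q'}^*$'': that image is the index-$n$ subgroup $\mu$, and identifying it with the multiplicative group of a subfield is precisely what has to be established. Keep in mind that $q'$ is not a datum of the problem; the existence of a $p$-power $q'$ with $q'-1=\frac{q-1}{n}$ \emph{is} the conclusion of the corollary.

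The paper closes exactly this gap with one short computation that your proposal never performs. After $d=n$, cancelling the factor $n=d$ in (\ref{garcia}) and rearranging yields the polynomial identity $(ax^d+b)^{\frac{q-1}{d}+1}=ax^{d+q-1}+b$. Writing $m=\frac{q-1}{d}+1$ and expanding the left-hand side, the monomials $x^{dj}$ have distinct degrees, so every intermediate coefficient $\binom{m}{j}a^jb^{m-j}$ with $0<j<m$ must vanish; since $a,b\neq 0$, this means $\binom{m}{j}\equiv 0 \pmod p$ for all $0<j<m$, which by Lucas' theorem forces $m$ to be a power of $p$, say $m=q'$. Hence $d=\frac{q-1}{q'-1}$, and comparing the extreme coefficients gives $a^{q'}=a$ and $b^{q'}=b$, i.e.\ $a,b\in\F_{q'}^*$. (Once this is in place, your concluding rescaling into Fermat form is fine, because every element of $\F_{q'}^*=\mu$ is then an $n$-th power in $\F_q$.) A minor, repairable point: your justification of $d=n$ cites the wrong half of Corollary \ref{corogeral}(ii) --- nonconstancy of $f'$ only rules out the \emph{lower} bound; what you need is $f'\neq 0\Rightarrow p\nmid d$ (as $f'=adx^{d-1}$) together with the ``upper bound attained iff $p\nmid\deg f$'' direction of (ii). The paper instead gets $d=n$ at once from the symmetry between $y^n=ax^d+b$ and $x^d=y^n/a-b/a$, which gives $d\le n$ and $n\le d$ simultaneously.
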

\begin{proof}  The curves  $y^n=ax^d+b$ and  $x^d=y^n/a-b/a$ are clearly the same. Thus Corollary \ref{corogeral} (ii)  gives $n=d$. Since $p\nmid d$, the polynomial $f(x)=ax^d+b$ has no repeated roots, and from  Corollary \ref{corogeral} (v),  we have  $n\equiv 1 \mod p$. Now equation (\ref{garcia}) yields
$$(ax^d+b)(ax^d+b)^{\frac{q-1}{d}}-1)=(x^{q-1}-1)ax^d,$$
and then
$$(ax^d+b)^{\frac{q-1}{d}+1}=ax^{d+q-1}+b.$$
However, such a polynomial identity holds if and only if  $\frac{q-1}{d}+1=q^{\prime}$, that is, $d=\dfrac{q-1}{q^{\prime}-1}$. Clearly this also implies $b^{q^{\prime}}=b$ and $a^{q^{\prime}}=a$, as claimed.

\text{}\\
\end{proof}

\begin{remark}
Note that any irreducible curve $\mathcal{F}: y^n=f(x)$, defined over $\F_q$ and  not meeting the conditions/results established in this section,
must satisfy the St\"ohr-Voloch bound for $\nu=1$ (cf. (\ref{SV-bound}) in Section \ref{intro}):
\begin{equation}\label{SV-baby} 
\#\mathcal{F}(\F_q)\leq g-1+d(q+2)/2.
\end{equation}

As mentioned previously, the characterization of Frobenius nonclassical curves is  motivated in part by the need to identify the curves for which  (\ref{SV-baby}) may not
hold. This gives a rather nice  bound applicable for the remaining curves. For a simple  illustration, consider the following  curve over $\F_{5^3}$:
 $$\mathcal{F}: y^{62}=x^{62}+(x+1)^{62}+1.$$ 
 It is easy to see that $f(x)=x^{62}+(x+1)^{62}+1$ has no repeated roots. Therefore,  since $62\not \equiv 1 \mod 5$,  Corollary \ref{corogeral} (iv) fails, and so  bound (\ref{SV-baby}) holds. That is,    $$\#\mathcal{F}(\F_{125})\leq (62-1)(62-2)/2-1+62(5^3+2)/2=5766.$$
Interestingly, it turns out that $5766$ is the actual value of $\#\mathcal{F}(\F_{125})$.
\end{remark}


\subsection{ Additional Remarks}
In what follows, we  provide some additional facts related to Frobenius nonclassical curves of type $y^n=f(x)$.
Some well-known examples of   this type of Frobenius  nonclassical curve  are

\begin{itemize}
\item $y^{q+1}=x^q+x$ (the Hermitian curve over $\F_{q^2}$).
\item $x^{\frac{q^k-1}{q-1}}+y^{\frac{q^k-1}{q-1}}=1$ (the Fermat curves over $\F_{q^k}$).
\item $y^{\frac{q^k-1}{q-1}}=x^{\frac{q^k-1}{q-1}}+(x^{q^{k-1}}+\cdots+x^q+x)$, over $\F_{q^k}$  (see \cite[Remark 2.9]{arcos-italianos}).
\end{itemize}
Note that in the preceding cases, we have $n=\frac{q^k-1}{q-1}$. The next result  generalizes  these examples.

\begin{theorem}\label{cyclic}
Let $f(x)\in \F_{q^k}[x]$ be a nonconstant polynomial, and let $\mathcal{W}$ be the set of MVSPs defined in (\ref{W-set}). The components of the  curve  $y^{\frac{q^k-1}{q-1}}=f(x) $ are $q^k$-Frobenius  nonclassical if and only if $f(x) \in \mathcal{W}$. Moreover,  a  curve $y^{\frac{q^k-1}{q-1}}=f(x)$, with $f(x) \in \mathcal{W}$,  is irreducible with probability at least $1-1/q$.
\end{theorem}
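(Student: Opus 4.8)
The plan is to treat the equivalence and the probabilistic estimate separately, settling the equivalence first since it is essentially a translation of the earlier machinery. Put $n:=\frac{q^k-1}{q-1}=1+q+\cdots+q^{k-1}$ and work throughout over $\F_{q^k}$. I would first record two arithmetic facts: $n\mid q^k-1$ (immediate from $n(q-1)=q^k-1$) and $n\equiv 1\pmod p$, so $p\nmid n$; the latter forces $y^n-f(x)\notin\F_{q^k}[x^p,y^p]$ for every nonconstant $f$, so Theorem \ref{thm2} applies to all $f\in\mathcal{W}$ with no extra hypotheses. By Theorem \ref{thm2} the components of $y^n=f(x)$ are $q^k$-Frobenius nonclassical precisely when (\ref{garcia}) holds. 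Since $(q^k-1)/n=q-1$ and $n\equiv 1\pmod p$, equation (\ref{garcia}) collapses over $\F_{q^k}$ to $f^q-f=(x^{q^k}-x)f'$, which is exactly equation (\ref{mvsp}) of Theorem \ref{Mills} with $T(z)=z^q-z$ and $\theta=1$.

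The equivalence is then read off Theorem \ref{Mills}. In the forward direction the identity forces $f$ to be an MVSP with $\prod_{\gamma\in V_f}(x-\gamma)=x^q-x=\prod_{\gamma\in\F_q}(x-\gamma)$, hence $V_f=\F_q$ and $f\in\mathcal{W}$. For the converse, $f\in\mathcal{W}$ gives $|V_f|=q$, so the hypothesis of the converse half of Theorem \ref{Mills} is met in every case ($q\geq 3$ gives $|V_f|>2$, while $q=2$ gives $|V_f|=2=p$); thus the displayed identity holds and Theorem \ref{thm2} returns the nonclassicality. This settles the characterization.

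For the probabilistic claim I would draw $f\in\mathcal{W}$ uniformly. Because $n\mid q^k-1$ places all $n$-th roots of unity in $\F_{q^k}$, the curve $y^n=f(x)$ is absolutely irreducible exactly when $f$ is not a perfect $\ell$-th power in $\F_{q^k}[x]$ for any prime $\ell\mid n$, equivalently when $\gcd(n,e_1,\dots,e_r)=1$ for the root multiplicities $e_i$ of $f$. The crucial reduction is to substitute a hypothetical factorization $f=c\,g^\ell$ into $f^q-f=(x^{q^k}-x)f'$: comparing degrees (using $g'\neq 0$) forces $\deg g=n/\ell$, hence $\deg f=n$; comparing leading coefficients forces $c^q=c$, i.e. $c\in\F_q^*$; and the surviving identity is $g\bigl(g^{\ell(q-1)}-1\bigr)=\ell(x^{q^k}-x)g'$. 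By Theorem \ref{Mills} this makes $g$ an MVSP over $\F_{q^k}$ whose value set is the prescribed set $\{0\}\cup\mu_{\ell(q-1)}$ of size $1+\ell(q-1)$. In particular, since $0\in V_f$ gives $f$ an $\F_{q^k}$-root, and Lemma \ref{lema00}(ii) (read here with $T'=-1$, $\theta=1$) makes every such root have multiplicity $\equiv 1\pmod p$, a single \emph{simple} $\F_{q^k}$-root already yields $\gcd(n,e_1,\dots,e_r)=1$; thus reducibility is confined to the $f$ possessing no simple $\F_{q^k}$-root.

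The remaining, and genuinely hardest, step is the density estimate $\Pr[\,y^n=f(x)\text{ reducible}\,]\le 1/q$. I would bound the reducible $f$ by a union bound over the primes $\ell\mid n$, counting for each $\ell$ the polynomials $c\,g^\ell$ arising above through the number of MVSPs $g$ with value set $\{0\}\cup\mu_{\ell(q-1)}$, and compare this count with $\#\mathcal{W}=q^{2^k}-q$ from Remark \ref{W-remark}, using the explicit monomial parametrization of $\mathcal{W}$ in Theorem \ref{MVSP-set}. The target is to trap the reducible locus inside a single $\F_q$-linear condition on $\mathcal{W}$—the vanishing of the trace-type coordinate governing whether the distinguished $\F_{q^k}$-root stays simple—whose complement forces irreducibility and carries relative density $1-1/q$. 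Showing that the prescribed MVSPs $g$ are few enough for this comparison to land on exactly $1-1/q$, rather than a weaker bound, is the main obstacle, and is where the combinatorics of the necklace/monomial description of $\mathcal{W}$ must be pushed.
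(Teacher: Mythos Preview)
Your treatment of the equivalence is essentially the paper's argument unpacked: the paper invokes Corollary~\ref{main} in one line (noting that $g(y)=y^{(q^k-1)/(q-1)}\in\mathcal{W}$), whereas you route through Theorem~\ref{thm2} and Theorem~\ref{Mills}, which amounts to the same thing. The only point left implicit in your converse direction is that the $\theta$ produced by Theorem~\ref{Mills} equals~$1$; this follows at once from $T'\equiv -1$ together with Lemma~\ref{lema00}(iii).

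The genuine gap is in the probabilistic estimate. You correctly reduce to showing that all but a $1/q$-fraction of $f\in\mathcal{W}$ possess a simple root, but your proposed attack---a union bound over primes $\ell\mid n$, counting auxiliary MVSPs $g$ with value set $\{0\}\cup\mu_{\ell(q-1)}$---is neither carried out nor clearly feasible at the precision required; you yourself flag getting the bound to land exactly on $1-1/q$ as ``the main obstacle,'' and the sketch about trapping the reducible locus inside ``a single $\F_q$-linear condition'' is not substantiated. The paper's argument is much shorter and structurally different: since $\mathcal{W}\cup\F_q$ is an $\F_q$-vector space (Remark~\ref{W-remark}), $\mathcal{W}$ is partitioned into $\F_q$-translation cosets $\{f+\alpha:\alpha\in\F_q\}$ of size~$q$, and \cite[Lemma~2.4(ii)]{borges_con_charac_mvsp} guarantees that within each such coset at most one member has no simple root. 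Hence at most $(\#\mathcal{W})/q$ polynomials can yield a reducible curve, and the bound $1-1/q$ drops out immediately. Your linear-condition picture is close in spirit but not the actual mechanism---the bad set is not a hyperplane, merely thin (at most one point) inside each coset.
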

\begin{proof}

The first statement follows immediately from Corollary \ref{main}, since  $g(y)=y^{\frac{q^k-1}{q-1}} \in \mathcal{W}$. 
For the second, let $f(x) \in \mathcal{W}$ be a fixed MVSP, and consider  the $q$ polynomials  $f_i:=f(x) + \alpha_i \in \mathcal{W}$, where $\alpha_i\in \F_q$. It follows from  \cite[Lemma 2.4 (ii)]{borges_con_charac_mvsp} that at most one such $f_i$ has no simple root. So out of the $\#\mathcal{W}=q^{2^k}-q$ (cf. Remark \ref{W-remark}) curves $y^{\frac{q^k-1}{q-1}}=f(x)$, with $f(x) \in \mathcal{W}$, at most  $(q^{2^k}-q)/q$ curves will be reducible. That is, the probability of being reducible is no greater than $\frac{(q^{2^k}-q)/q}{q^{2^k}-q}=1/q$, which gives the result.

\end{proof}

\begin{remark} Recall that a detailed description of $\mathcal{W}$ in the case $k=2$ is given in (\ref{vspace}). From that, it can be  verified that all  irreducible curves $y^{q+1}=f(x)$ arising from Theorem \ref{cyclic} are  $\F_{q^2}$-isomorphic to the Hermitian curve.
\end{remark}

Hefez and Voloch \cite[Theorem 1]{HV}  have proved  that if $\mathcal{F}$ a plane smooth $q$-Frobenius nonclassical curve  of degree $n$, then
\begin{equation}\label{HV}
\#\mathcal{F}(\F_q)=n(q-n+2).
\end{equation}

Next we show that for $q$-Frobenius nonclassical curves of type $y^n=f(x)$, the number  given in (\ref{HV}) is, in fact,   a lower bound for $\#\mathcal{F}(\F_q)$. So as far  as the number of  rational points is concerned, the singular  curves  $y^n=f(x)$ may  be of considerable  interest.

\begin{theorem}\label{HVH}
Let $f(x) \in \F_q[x]$ be a nonconstant polynomial and $\mathcal{F}: y^n=f(x)$ be a $q$-Frobenius nonclassical curve. 
Then
 $$\#\mathcal{F}(\F_q)\geq n(q-n+2),$$
  and equality holds if and only if $\mathcal{F}$ is smooth.
\end{theorem}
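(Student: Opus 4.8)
The plan is to reduce, via an $\F_q$-projective change of coordinates, to the case $\deg f=n$, where the count becomes transparent. First I would record the structural input. Since $\mathcal{F}$ is $q$-Frobenius nonclassical, Theorem~\ref{thm2} gives $n\mid q-1$, while Corollaries~\ref{main} and~\ref{corogeral} give that $f$ is an MVSP with $V_f=V_{y^n}=\{0\}\cup(\F_q^*)^n$, that $p\nmid n$, and that $\deg f\le n$. In particular $|V_f|=1+(q-1)/n\ge 2$, so some $x_0\in\F_q$ has $f(x_0)\ne 0$; this is a root of $f$ of multiplicity $k=0$ in the sense of Remark~\ref{zero-mult}. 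Applying Lemma~\ref{homogeniza} to this $x_0$ produces a curve $\mathcal{G}\colon y^n=g(x)$ with $\deg g=n$ that is $\F_q$-projectively equivalent to $\mathcal{F}$. Since such an equivalence preserves the number of $\F_q$-points, smoothness, and $q$-Frobenius nonclassicality, I may replace $\mathcal{F}$ by $\mathcal{G}$ and thereby assume that $\deg f=n$ and that the leading coefficient of $f$ is a nonzero $n$-th power in $\F_q$ (namely the value taken by the original polynomial at $x_0$).

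Next I would count $\#\mathcal{F}(\F_q)$ directly. For the affine points, for each $x\in\F_q$ the value $f(x)$ lies in $V_f=\{0\}\cup(\F_q^*)^n$, so (using $n\mid q-1$) the fiber $\{y:y^n=f(x)\}$ has exactly one $\F_q$-point when $f(x)=0$ and exactly $n$ of them otherwise. Writing $Z$ for the number of distinct $\F_q$-roots of $f$, the affine count is therefore $Z+n(q-Z)=nq-(n-1)Z$. For the points at infinity, the leading coefficient of $f$ being a nonzero $n$-th power forces the equation $Y^n=(\mathrm{lead})\,X^n$ at $Z=0$ to have exactly $n$ solutions; moreover these $n$ points are the images, under the projective equivalence above, of the affine points of the original curve lying over $x_0$, all of which are smooth because there $y\ne 0$ and $p\nmid n$. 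Hence
\begin{equation*}
\#\mathcal{F}(\F_q)=\bigl(nq-(n-1)Z\bigr)+n.
\end{equation*}

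Finally, since $Z\le\deg f=n$, this yields $\#\mathcal{F}(\F_q)\ge nq-(n-1)n+n=n(q-n+2)$, the desired bound. For the equality clause I would argue that equality holds precisely when $Z=n$, i.e.\ when $f$ splits into $n$ distinct linear factors over $\F_q$; by Corollary~\ref{corogeral}(iii) (simple roots of $f$ lie in $\F_q$) this is equivalent to $f$ being separable. On the other hand, since $\partial(y^n-f)/\partial y=ny^{n-1}$ vanishes only at $y=0$ (as $p\nmid n$), the affine singular points of $\mathcal{F}$ are exactly the points $(x_0,0)$ with $x_0$ a repeated root of $f$, while the points at infinity are already smooth; thus $\mathcal{F}$ is smooth if and only if $f$ is separable. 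Combining these, equality holds if and only if $\mathcal{F}$ is smooth, in which case one recovers the Hefez--Voloch value \eqref{HV}.

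The main obstacle, and the point on which the argument turns, is the reduction to $\deg f=n$: it is what makes the count of points at infinity unambiguous (exactly $n$, all smooth) and collapses the inequality to the trivial bound $Z\le n$. Carried out naively for arbitrary $\deg f\le n$, the count of points at infinity depends on whether $\deg f$ equals $n$, $n-1$, or is at most $n-2$, and on whether the leading coefficient is an $n$-th power, forcing a delicate case analysis (and a much sharper bound on $Z$) that the projective normalization of Lemma~\ref{homogeniza} avoids entirely.
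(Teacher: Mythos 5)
Your proof is correct, and it takes a genuinely different route from the paper's. The paper argues in two halves: for smooth curves it simply invokes the Hefez--Voloch formula \eqref{HV} to get equality, and for singular curves it assumes $\#\mathcal{F}(\F_q)\le n(q-n+2)$, deduces that the number $r$ of distinct $\F_q$-roots of $f$ satisfies $r\ge n-2$, and then eliminates the cases $r\in\{n,n-1\}$ and $r=n-2$ one by one using Corollary \ref{corogeral}(vi) and Lemma \ref{lema00}, with the last few small values of $n$ dispatched by inspection. You instead normalize via Lemma \ref{homogeniza} so that $\deg f=n$ and the leading coefficient is a nonzero $n$-th power, and then produce the \emph{exact} count
\begin{equation*}
\#\mathcal{F}(\F_q)=nq-(n-1)Z+n,
\end{equation*}
where $Z$ is the number of distinct $\F_q$-roots of $f$; the bound is then just $Z\le n$, and the equality clause reduces to the chain $Z=n \iff f \text{ separable} \iff \mathcal{F} \text{ smooth}$, using Corollary \ref{corogeral}(iii) for the middle equivalence and $p\nmid n$ plus smoothness at infinity for the last. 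Each ingredient you use is legitimately available: irreducibility of $\mathcal{F}$ rules out $y^n-f(x)\in\F_q[x^p,y^p]$, so Theorem \ref{thm2} and Corollaries \ref{main}, \ref{corogeral} apply, and the leading coefficient of the normalized polynomial is $f(x_0)\in V_f\setminus\{0\}=(\F_q^*)^n$, which is what makes the $n$ points at infinity rational and smooth. What your approach buys is considerable: it is self-contained (it does not cite the Hefez--Voloch theorem but rather re-derives that count for this family), it avoids the paper's case analysis entirely---including the step the paper only sketches (``after a quick inspection'')---and it gives strictly more information, namely that the excess over the lower bound is exactly $(n-1)(n-Z)$, quantified by how many rational roots $f$ is missing. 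The paper's approach, in exchange, is shorter on the page because the smooth case is outsourced to \cite{HV}.
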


\begin{proof}

In view of (\ref{HV}), we only need to  prove that if $\mathcal{F}$ is singular, then $\#\mathcal{F}(\F_q)>n(q-n+2)$. Let $r$ be the number of distinct $\F_q$-roots of $f(x)$, so by Corollary \ref{corogeral} we have 
$$1\leq r\leq \deg f \leq n.$$
From Corollary \ref{main},  $f(x)$ and $g(y)=y^n$ are MVSPs with $V_f=V_g$. In particular, $n|(q-1)$. Thus  the $q-r$ nonroots of $f(x)$ in $\F_q$ will give rise to $n(q-r)$ affine $\F_q$-points of $\mathcal{F}$. 
Now assume that $\mathcal{F}$ is singular and $\#\mathcal{F}(\F_q) \leq n(q-n+2)$. In particular, $n(q-r)\leq n(q-n+2) $ i.e. $r\geq n-2$.

First, let us consider the case $r \in \{n-1,n\}$.  If $f(x)$ is separable, then $\mathcal{F}$ is nonsingular,  contradicting our hypothesis. Thus we may assume
$r=n-1$ and $f(x)=(x-\alpha_1)^2(x-\alpha_2)\cdots (x-\alpha_{n-1})$, where $\alpha_i \in \F_{q}$ are all distinct. From Corollary \ref{corogeral} (vi), all $\F_q$-roots of 
$f(x)$ have the same reduction modulo $p$. This  implies $f(x)=(x-\alpha_1)^2$ and $n=2$, contradicting the irreducibility of $y^n=f(x)$.

Now consider the case $r=n-2$. This  implies that  none of  the  $N=d(q-d+2)$ $\F_q$-rational points of $\mathcal{F}$ is a ramification point over a root of $f(x)$. In particular, $f(x)$ cannot have a  simple root, which is necessarily an $\F_q$-root by Lemma \ref{lema00} (i). Therefore, 
$$n\geq \deg f \geq 2r=2n-4,$$ and then $n\leq 4$. After a quick inspection, one can see that these few  small values of $n$ can be  ruled out as well, and the result follows.

%
%

\end{proof}


\section{Final Remarks}\label{secao6}
In this section, we provide some additional examples, and briefly discuss some problems related to Frobenius nonclassical curves. 

In Sections \ref{secao3} and \ref{secao4} , we offered some examples of $q^k$-Frobenius nonclassical curves $f(x)=g(y)$, where 
$f$ and $g$ are polynomials given by the set $\mathcal{W}$ in (\ref{W-set}). Given the potentially relevant properties of the curves arising in this way, it could be beneficial  to  characterize them further. One nice example is  the so-called generalized Hermitian curve 

\begin{equation}\label{curvaGS1}
\mathcal{GS}: y^{q^{k-1}}+\cdots+y^q+y=x^{q+1}+x^{1+q^2}+\cdots+x^{q^{k-1}+q^{k-2}}
\end{equation}
over $\F_{q^k}$ ($k\geq 2$), which was introduced by Garcia and Stichtenoth  \cite{Garcia_A_class_of_poly_1999424}. They   proved that  $\mathcal{GS}$ has genus $g=q^{n-1}(q^{n-1}-1)/2$ and $N=q^{2k-1}+1$ $\F_{q^k}$-rational points. Some authors have used additional  arithmetic properties of this curve to  construct algebraic geometric codes with good parameters (see \cite{bulygin_generalize_hermitian}, \cite{munuera_sepulv_Generl_hermitian_code}, \cite{Munuera_sepulv_Algebraic_codes_castle}).

To see that  $\mathcal{GS}$ is indeed $q^k$-Frobenius nonclassical, note  that  the polynomial $f(x)$ on the right side of (\ref{curvaGS1}) is defined as  $f(x)=s_2(x,x^q,\ldots,x^{q^{k-1}})$, where 
$$s_{2}(x_1,x_2,\ldots,x_n)=\sum\limits_{i<j}x_ix_j$$
 is the second elementary symmetric polynomial.
Therefore $f \in \F_{q^k}[x]$ is a polynomial of degree $q^{k-2}+q^{k-1} \leq \frac{q^k-1}{q-1}$ such that $V_F\subseteq \F_q$. Thus from  \cite[Lemma 4.1]{borges_con_charac_mvsp},  $f \in \F_{q^k}[x]$ is an MVSP with $V_f=\F_q$. Clearly,  $g(y)=y^{q^{k-1}}+\cdots+y^q+y \in \F_{q^k}[x]$ is an MVSP with
$V_g=\F_q$ as well. Hence the $q^k$-Frobenius nonclassicality of $\mathcal{GS}$ follows from
Corollary \ref{main}.

A natural question is whether the curve  $\mathcal{GS}$ can be further generalized. For instance, one way of doing that is to identify a family of polynomials $\{f_i(x)\} \subseteq \mathcal{W}$ for which some of the curves 
\begin{equation}\label{curvaGS2}
y^{q^{k-1}}+\cdots+y^q+y=f_i(x)
\end{equation}
have a good ratio $N/g$. That is to say  at least as good as the  corresponding ratio for the curve $\mathcal{GS}$. Regarding this particular class of curves, we have  made some progress which we hope to  report in the near future. However, there is certainly room for additional research,  some of which can be quite challenging. For instance, consider the irreducible curves over $\F_{q^k}$ 
$$\mathcal{F}_{a,b}: y^{\frac{q^k-1}{q-1}}=x^{\frac{q^k-1}{q-1}}+a(x^{q^{k-1}}+\cdots+x^q+x)+b,$$
where $a,b\in \F_{q}$.
It is not hard to see that computing the genus and the number of $\F_{q^k}$-rational points of $\mathcal{F}_{a,b}$ boils down to determining the cardinality $N_{k-1}(u,v)$ of 
\begin{equation}\label{Moisio}
\{\alpha \in \F_{q^{k-1}}\mid T(\alpha)=u \text { and }N(\alpha)=v\},
\end{equation}
where $u,v \in \F_q$, and $T$, $N :\F_{q^{k-1}}\rightarrow \F_q$ are the trace and norm functions, respectively.
Apart from a few particular cases, determining $N_{k-1}(u,v)$ is still an open problem.  Nicolas Katz \cite{KatzBound} used deep results from algebraic geometry to set bounds for the number $N_{k-1}(u,v)$. More recently, Moisio and Wan \cite{Katz-Moisio} used results on the zeta function of certain toric Calabi-Yau hypersurfaces to improve Katz's bound. Part of the motivation 
to determine $N_{k-1}(u,v)$ is given by its known connections with many other problems (e.g. \cite{Cohen1},\cite{Moisio1},\cite{Wan1}).  Accordingly,  this new  relation with certain Frobenius nonclassical curves establishes an additional motivation.


\subsection{Some curves $y^{q^k-1}=f(x)$}

Recall from Theorem \ref{thm2} that  the $\F_{q^k}$-Frobenius nonclassical curves of type $y^{q^k-1}=f(x)$ are irreducible  ones for which $f(x)$ satisfies $$f(x)(f(x)-1)=(x-x^{q^k})f'(x).$$ We know (see proof of Theorem \ref{thm2}) that the polynomial $f(x)$ must be of type $B$, i.e., $f=1-\frac{g'}{g}(x-x^{q^k})$ where
\begin{center}
$g$ is a monic divisor of $x^{q^k}-x$ such that $g''=0.$
\end{center}

As was noted previously, such polynomials $f \in \F_{q^k}[x]$ can be easily constructed. The following result is an explicit example arising from  this construction. 

\begin{theorem}\label{penultimo}
If $k\geq 3$, then the curve 
$$\mathcal{F}: y^{q^k-1}=1-\frac{x^{q^k}-x}{x^q-x}$$ is $\F_{q^k}$-Frobenius nonclassical of genus
$$g(\mathcal{F})=\frac{(q^k-2)(q^{k-1}-1)-(q+1)(q-2)}{2},$$
and has at least $(q^k-1)(q^k-q)$ $\F_{q^k}$-rational points. Moreover, if $q=2$ then its number of rational points
is  $(2^k-1)(2^k-2)+3$.
\end{theorem}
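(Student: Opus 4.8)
The plan is to verify the three distinct claims---Frobenius nonclassicality, the genus formula, and the point count---in sequence, leaning heavily on the machinery already established. First I would confirm that $\mathcal{F}$ is indeed $\F_{q^k}$-Frobenius nonclassical. Here I take $g(x)=x^q-x$, which is a monic divisor of $x^{q^k}-x$ satisfying $g''=0$ (since $(x^q-x)''=0$ in characteristic $p$). By the discussion preceding the theorem, the polynomial $f(x)=1-\frac{g'(x)}{g(x)}(x-x^{q^k})=1-\frac{x^{q^k}-x}{x^q-x}$ is of type $B$, and since $g$ is a proper divisor of $x^{q^k}-x$ (as $k\geq 3$ forces $\deg(x^q-x)=q<q^k$), Lemma \ref{tipos} applies with $\theta=-1$ to give $f(f-1)=-(x^{q^k}-x)f'$, which is exactly equation (\ref{garcia}) in Theorem \ref{thm2} for $n=q^k-1$. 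Thus the components of $\mathcal{F}$ are Frobenius nonclassical, and irreducibility (which I should also check, via Lemma \ref{galois} by exhibiting a solution in $\F_{q^k}\times\F_{q^k}^*$) secures the first claim.

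For the genus, I would first simplify the defining equation: $f(x)=1-\frac{x^{q^k}-x}{x^q-x}$, and note that $\frac{x^{q^k}-x}{x^q-x}$ is a polynomial of degree $q^k-q$, so $\deg f=q^k-q$. The curve $y^{q^k-1}=f(x)$ is a Kummer cover of $\mathbb{P}^1$, and the standard genus formula for such cyclic covers (via Riemann--Hurwitz) requires a careful ramification analysis over the roots of $f$ and over the point at infinity. The key data are the multiplicities of the roots of $f$ reduced modulo $n=q^k-1$, which by Corollary \ref{corogeral} (iii),(vi) and the type-$B$ structure are controlled explicitly: the $\F_{q^k}$-roots contribute one type of ramification and the higher-multiplicity (necessarily $p$-power multiplicity) roots another. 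Summing the local contributions $\sum (e_P-1)$ and plugging into $2g(\mathcal{F})-2=(q^k-1)(2\cdot 0-2)+\deg(\mathrm{Ram})$ should yield the stated closed form; matching it to $\frac{(q^k-2)(q^{k-1}-1)-(q+1)(q-2)}{2}$ is the most calculation-heavy bookkeeping step.

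For the point count, the lower bound $(q^k-1)(q^k-q)$ follows immediately from the reasoning in the proof of Theorem \ref{HVH}: the $q^k-r$ values $\alpha\in\F_{q^k}$ with $f(\alpha)\neq 0$ each lift to $n=q^k-1$ distinct affine rational points (using $n\mid q^k-1$), and here the number of distinct $\F_{q^k}$-roots is $r=q$ (the roots of $x^q-x$), giving $(q^k-1)(q^k-q)$ affine points from the nonroots. The refined count when $q=2$ requires adding the contributions from ramification points over the roots of $f$ and over infinity; since $f=1-(x^{2^k}-x)/(x^2-x)$ has its roots exactly at $\F_2=\{0,1\}$, one tallies the two ramification fibers plus the point(s) at infinity to produce the extra $3$, yielding $(2^k-1)(2^k-2)+3$.

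\textbf{Main obstacle.} The principal difficulty will be the genus computation: correctly identifying the ramification type over \emph{every} branch point---including the behavior at infinity, where the degree $\deg f=q^k-q$ interacts with $n=q^k-1$ and may or may not be totally ramified depending on $\gcd(q^k-q,\,q^k-1)$---and then summing the local contributions to match the asymmetric-looking closed form. The type-$B$ characterization pins down the root multiplicities modulo $p$, but translating these into the exact ramification indices $e_P=n/\gcd(n,k_P)$ and carrying the Riemann--Hurwitz sum through to the stated expression is where the real work lies.
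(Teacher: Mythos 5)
Your first and third steps are sound and coincide with the paper's own argument: the nonclassicality claim is exactly the type-$B$ route (take $g(x)=x^q-x$, apply Lemma \ref{tipos} with $\theta=-1$, and feed the result into Theorem \ref{thm2} with $n=q^k-1$), and the lower bound $(q^k-1)(q^k-q)$ comes, as you say, from the $q^k-q$ elements of $\F_{q^k}$ where $f$ takes the value $1$. The genuine gap is in the genus step, and it is a missing idea rather than missing bookkeeping: everything hinges on an explicit factorization of $f$, which the paper obtains from the identity
$$1-\frac{x^{q^k}-x}{x^q-x}\;=\;\frac{x^q-x^{q^k}}{x^q-x}\;=\;-\,\frac{\bigl(x^{q^{k-1}}-x\bigr)^{q}}{x^q-x},$$
so that the roots of $f$ are precisely the elements of $\F_q$, each of multiplicity $q-1$, together with the elements of $\F_{q^{k-1}}\setminus\F_q$, each of multiplicity $q$. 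The tools you cite instead --- Corollary \ref{corogeral} (iii),(vi) and ``the type-$B$ structure'' --- only yield congruences modulo $p$ and bounds on the multiplicities; they cannot determine them, so the Riemann--Hurwitz sum you set up cannot be evaluated. (Once the factorization is in hand, the sum is routine: $q$ branch places with $\gcd(q^k-1,q-1)=q-1$, $q^{k-1}-q$ totally ramified places, and $\gcd(q^k-1,q^k-q)=q-1$ at infinity; this reproduces the stated genus, and for $q=2$ it also identifies the three extra rational points over $P_x$, $P_{x-1}$, $P_\infty$.)

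Second, your irreducibility plan fails as stated. Lemma \ref{galois} concludes only that the $\F_{q^k}$-irreducible factors of $y^n-f(x)$ are absolutely irreducible; it never says there is a single factor. The case $k=2$, $q>2$ is an explicit counterexample to your reasoning: there the same identity gives $f=-(x^q-x)^{q-1}$, the equation has solutions in $\F_{q^2}\times\F_{q^2}^*$ (so the hypotheses of Lemma \ref{galois} hold), yet $f$ is, up to a constant, a perfect $\ell$-th power for every prime $\ell\mid q-1$, so $y^{q^2-1}=f(x)$ is not absolutely irreducible. The paper's irreducibility argument is again the factorization: for $k\geq 3$ the set $\F_{q^{k-1}}\setminus\F_q$ is nonempty, so $f$ has roots with the two coprime multiplicities $q-1$ and $q$, hence $f$ is not a proper power and $y^{q^k-1}-f(x)$ is irreducible. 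Note that your only use of the hypothesis $k\geq 3$ (making $x^q-x$ a proper divisor of $x^{q^k}-x$) already holds when $k=2$, which is precisely where the conclusion can fail --- a sign that the hypothesis is doing work your proposal does not capture.
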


\begin{proof}
Note that $f(x)=1-\frac{x^{q^k}-x}{x^q-x}$ is the polynomial  of degree $\deg f=q^k-q$ given by $$f(x)=(x^q-x)^{q-1}\prod\limits_{\alpha_i\in \F_{q^{k-1}}\backslash \F_q}(x-\alpha_i)^q.$$
Since $k\geq3$, there exist two roots of $f(x)$ whose multiplicities are coprime. Therefore,
the curve $\mathcal{F}$ is irreducible. The genus follows directly from the  Hurwitz-Zeuthen formula (see e.g. \cite{FunctionFields}).

Note that for $g(x)=x^q-x \in \F_{q^k}[x]$, we have  $\frac{g'}{g}(x^{q^k}-x)=\frac{x^{q^k}-x}{x^q-x} \in \mathcal{A}$, and then $f:=1-\frac{x^{q^k}-x}{x^q-x} \in \mathcal{B}$. Therefore,
$\mathcal{F}$ is $\F_{q^k}$-Frobenius nonclassical.
The first assertion about the number of $\F_{q^k}$-rational points, follows  (similarly to the proof of Theorem \ref{HVH}) directly from the fact that $f(x)$ has exactly $q$ $\F_{q^k}$-roots.
If  $q=2$, the three additional points come from  ramification points over the places $P_{x},  P_{x-1}$ and $P_{\infty}$. This finishes the proof.

\end{proof}

\begin{remark} It can be checked that some  of  the current records of curves with many points, listed at http://www.manypoints.org, are held by Frobenius nonclassical curves. For an example,  note that  for the case $q=2$ in Theorem \ref{penultimo}, the values $k=3$ and $k=4$
yield $(g(\mathcal{F}), \#\mathcal{F}(\F_{8}))=(9,45)$ and $(g(\mathcal{F}), \#\mathcal{F}(\F_{16}))=(49,213)$, repectively. Both  cases  are current records listed  at http://www.manypoints.org.

\end{remark}

We turn our attention to an object in Finite Geometry that  was investigated in connection with Frobenius nonclassical curves in \cite{arcos-italianos}.

An $(N,d)$-arc is a subset of $N$ points in $PG(2,q)$ with at most $d$ points on any line and $d$ on some line.
The $(N,d)$-arc  is called {\it complete} if it is not contained in an $(N+1,d)$-arc. When $\mathcal{F}$ is a projective plane curve of degree $d$, defined over $\F_q$, that intersects at least one line in $d$ distinct $\F_q$-points then $\mathcal{F}(\F_q)$, the set of $\F_q$-points of $\mathcal{F}$, is an example of $(N,d)$-arc. If such $(N,d)$-arc is complete, we say that $\mathcal{F}$ has the {\it arc property.}

In \cite{arcos-italianos}, the authors proved the arc property  for several $q$-Frobenius nonclassical curves and raised the question of whether or not all $q$-Frobenius nonclassical curves have the arc property. In \cite{borges1}, we gave a negative answer to this question using  a particular  singular curve. The next theorem  will provide additional counter-examples, but now arising from  nonsingular curves.

\begin{theorem} If  $q$ is a power of $2$ and $k\geq 3$ is  an integer, then the curve
$$\mathcal{F}: y^{q^k-1}=\dfrac{x^{q^k}-x}{x^q-x}$$  is  $q^k$-Frobenius nonclassical. Moreover, if  $q=2$   then $\mathcal{F}$  is  smooth and does not have the arc property. 
\end{theorem}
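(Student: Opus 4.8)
The plan is to establish the three assertions separately, leaning on the machinery already developed. For the Frobenius nonclassicality, I would first observe that $f(x)=\frac{x^{q^k}-x}{x^q-x}$ is precisely $\frac{g'}{g}(x-x^{q^k})$ with $g=x^q-x$, a monic proper divisor of $x^{q^k}-x$ satisfying $g''=0$; hence $f$ is of type $\mathcal{A}$ over $\F_{q^k}$ by Definition \ref{tipoAB}. Since $q$ is even, $-1=1$ and $y^{q^k-1}=f(x)$ coincides with the curve $y^{q^k-1}=f(x)$ where $f\in\mathcal{A}$. By the proof of Theorem \ref{thm2} (with $n=q^k-1$, so that $\frac{q^k-1}{n}=1$), the relevant identity is $f(x)(f(x)-1)=(x-x^{q^k})f'(x)$, which holds for type-$\mathcal{A}$ polynomials by Lemma \ref{tipos} with $\theta=1$. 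In characteristic two the distinction between type $\mathcal{A}$ and type $\mathcal{B}$ collapses because $1-f=1+f$ and $f(f-1)=f(f+1)=f^2+f$; thus Theorem \ref{thm2}, together with the fact that $f$ has a root in $\F_{q^k}^*$ is not directly needed, yields that $\mathcal{F}$ is $q^k$-Frobenius nonclassical.

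For smoothness when $q=2$, I would work with $f(x)=\frac{x^{2^k}-x}{x^2-x}=\frac{x^{2^k}+x}{x^2+x}$, whose roots are exactly the elements of $\F_{2^k}\setminus\F_2$, each with multiplicity one (indeed $x^{2^k}-x$ is separable and $x^2-x$ divides it cleanly). So $\deg f=2^k-2$ and $f$ is separable with all simple roots lying in $\F_{2^k}$. The affine curve $y^{2^k-1}=f(x)$ has partial derivatives $\frac{\partial}{\partial y}=(2^k-1)y^{2^k-2}=y^{2^k-2}$ and $\frac{\partial}{\partial x}=-f'(x)=f'(x)$; a singular affine point would force $y=0$ and $f(x)=0$ simultaneously, but at a root $\alpha$ of $f$ we have $f'(\alpha)\ne0$ since $f$ is separable, so no affine singularity occurs. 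I would then check the points at infinity and over the fibers where $f$ vanishes or is infinite using the standard model for superelliptic curves: because $\gcd(2^k-1,\deg f)$ and the multiplicities are all one, the usual criterion (see the Hurwitz–Zeuthen/normalization setup referenced in Theorem \ref{penultimo}) gives that the projective normalization is smooth. The cleanest route is to note that $\mathcal{F}$ is a Kummer cover of $\mathbb{P}^1$ with all branch points totally and tamely ramified and all ramification indices equal to $n=2^k-1$, which together with simplicity of the branch points yields a smooth model.

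For the failure of the arc property, I would recall that $\mathcal{F}(\F_{q^k})$ is an $(N,d)$-arc with $d=\deg\mathcal{F}=2^k-2$ in $PG(2,2^k)$, and that completeness would require that no point outside $\mathcal{F}(\F_{q^k})$ can be added without some line meeting the enlarged set in more than $d$ points. The strategy is to exhibit an external $\F_{q^k}$-point $P$ such that every line through $P$ already meets $\mathcal{F}(\F_{q^k})$ in at most $d-1$ points, so that $\mathcal{F}(\F_{q^k})\cup\{P\}$ is still an $(N+1,d)$-arc, contradicting completeness. A natural candidate exploits the symmetry of the construction: the fiber structure over the two missing values $x\in\{0,1\}=\F_2$ and the behavior at infinity leave ``room'' on certain lines. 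Concretely, I would count intersection multiplicities of lines with $\mathcal{F}$ and locate a line (e.g.\ a vertical line $x=c$ with $c\in\F_2$, or a suitably chosen line through a ramification point at infinity) that meets $\mathcal{F}(\F_{q^k})$ in strictly fewer than $d$ points, and then show a point on such a deficient line can be adjoined.

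The main obstacle I anticipate is the arc-property computation: verifying completeness fails requires a genuine line-by-line count of $\F_{q^k}$-intersections of $\mathcal{F}$, and exhibiting a concrete addable external point together with the uniform bound ``at most $d$ points on every line through it'' demands careful bookkeeping of the ramification over $x\in\{0,1,\infty\}$ and the tangent behavior dictated by the Frobenius nonclassical condition \eqref{frob}. By contrast, the Frobenius nonclassicality reduces immediately to Theorem \ref{thm2} and Lemma \ref{tipos}, and smoothness is a routine separability-plus-Kummer-cover check; the real work, and the step most likely to require a clever choice rather than a mechanical argument, is pinning down the external point that defeats completeness.
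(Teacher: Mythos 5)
Your first two parts are essentially salvageable, but the third part --- the failure of the arc property, which is the heart of the theorem --- is not proved: you only describe a search strategy, and your one concrete suggestion points the wrong way. The paper's argument runs as follows. Since $\mathcal{F}$ is smooth for $q=2$, the Hefez--Voloch formula (\ref{HV}) gives \emph{exactly} $\#\mathcal{F}(\F_{2^k})=d(2^k-d+2)=3d$ rational points, where $d=2^k-1$. One then checks that all $3d$ of these points lie on the three lines $xy(x-z)=0$, with $d$ points on each: the fibres over $x=0$ and $x=1$ are full (here $f(0)=f(1)=1$, and $y^{2^k-1}=1$ has $2^k-1$ solutions in $\F_{2^k}$), while the $2^k-2$ roots of $f$ together with the unique point at infinity $(1:0:0)$ give $d$ points on $y=0$. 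Consequently every line \emph{other} than these three meets $\mathcal{F}(\F_{2^k})$ in at most $3$ points, so any rational point $P$ outside the three lines can be adjoined: each line through $P$ then carries at most $3+1\le d$ points of the enlarged set (this is where $k\ge 3$, i.e.\ $d\ge 7$, is used), lines missing $P$ are unaffected, and the three full lines still witness the parameter $d$; hence the arc is not complete. Your proposal is missing both ingredients of this argument --- the exact count $3d$ (you never invoke (\ref{HV}), which is the entire reason smoothness is proved first) and the structural fact that the whole point set sits on three lines. Worse, your concrete candidate for a ``deficient'' line, a vertical line $x=c$ with $c\in\F_2$, is precisely one of the \emph{full} lines carrying $d$ points; the deficient lines are all the others.

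Two smaller problems. For the nonclassicality you invoke Lemma \ref{tipos}, which is stated only for odd $q$; in characteristic $2$ you should instead verify (\ref{garcia}) directly, as the paper does: from $f(x)\cdot(x^q-x)=x^{q^k}-x$ one gets $f'(x)\cdot(x^q-x)=f(x)-1$, hence $f(f-1)=(x^{q^k}-x)f'$, which is (\ref{garcia}) with $n=q^k-1$ because $q^k-1\equiv 1 \pmod 2$; and irreducibility (needed even to speak of $\mathcal{F}$ as a Frobenius nonclassical curve) holds because $f$ is separable, hence not a proper power. For smoothness, your affine check is fine, but ``the projective normalization is smooth'' is a tautology and not what the statement asserts: what is needed is smoothness of the plane projective curve, so you must examine its unique point at infinity $(1:0:0)$. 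This is immediate --- the homogenized equation is $y^{n}=z\,f_h(x,z)$ with $f_h(1,0)=1$, so the partial derivative with respect to $z$ is nonzero there --- but it cannot be skipped, since smoothness of the plane curve is exactly what feeds the Hefez--Voloch count above.
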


\begin{proof} Note that $f(x)=\dfrac{x^{q^n}-x}{x^q-x}$ has no repeated roots, and so the curve  $\mathcal{F}$ is irreducible.  Its $q^k$-Frobenius nonclassicality follows directly from (\ref{garcia}) in Theorem \ref{thm2}.
It is clear that for $q=2$,  $\mathcal{F}$ is a smooth curve of degree $d=q^k-1$.  In this case, the Hefez-Voloch formula in (\ref{HV})  gives $$\#\mathcal{F}(\F_{q^k})=d(q^k-d+2)=3d.$$ 
Now one can easily  check that these  $3d$ rational points of $\mathcal{F}$ lie on the union of lines given by $xy(x-z)=0$, with $d=q^k-1$ poinst on each line. Let $P\in PG(2,q^k)$ be a point on the complement  of the union of these three lines. Clearly,  any line incident to $P$ will intersect this set of $3d$ points in at most $3$ points. Since $k>2$, we have $d=2^k-1>3$, and so the arc is not complete.

\end{proof}


\section{Acknowledgments }
I thank Felipe Voloch for so many insightful  discussions and comments during the course of this project.
I would like to thank Michael Zieve for pointing out that a result proved in a previous  version of this paper,
namely Lemma \ref{lema1.9}, was known and due to Michael D. Fried and R. E. MacRae. I also thank James Hirschfeld for giving several suggestions to improve the presentation of this manuscript.

The author was partially supported by FAPESP-Brazil grant 2011/19446-3.

\bibliography{mybiblio.bib}
\bibliographystyle{plain}

\end{document}